\title{A Birman exact sequence for $\Aut(F_n)$}
\author{Matthew Day\footnote{Supported in part by an NSF postdoctoral fellowship}\ \ and Andrew Putman\footnote{Supported in part by NSF grant DMS-1005318}}
\newtheorem{theorem}{Theorem}[section]
\newtheorem{maintheorem}{Theorem}
\newtheorem{proposition}[theorem]{Proposition}
\newtheorem{lemma}[theorem]{Lemma}
\newtheorem{corollary}[theorem]{Corollary}
\newtheorem{case}{Case}
\newcommand\BeginCases{\setcounter{case}{0}}
\theoremstyle{definition}
\theoremstyle{remark}
\newtheorem*{remark}{Remark}
\DeclareMathOperator{\Hom}{Hom}
\DeclareMathOperator{\CHom}{Der}
\DeclareMathOperator{\Ker}{Ker}
\DeclareMathOperator{\Mod}{Mod}
\DeclareMathOperator{\IA}{IA}
\DeclareMathOperator{\GL}{GL}
\newcommand\Z{\ensuremath{\mathbb{Z}}}
\newcommand\Q{\ensuremath{\mathbb{Q}}}
\DeclareMathOperator{\HH}{H}
\DeclareMathOperator{\CC}{C}
\DeclareMathOperator{\supp}{supp}
\DeclareMathOperator{\mult}{mult}
\DeclareMathOperator{\Ab}{ab}
\DeclareMathOperator{\End}{End}
\DeclareMathOperator{\Aut}{Aut}
\DeclareMathOperator{\Out}{Out}
\newcommand\Span[1]{\ensuremath{\langle #1 \rangle}}
\newcommand\Set[2]{\ensuremath{\{\text{#1 $|$ #2}\}}}
\newcommand\AutFB[3]{\ensuremath{\mathcal{A}_{#1,#2,#3}}}
\newcommand\BKer[3]{\ensuremath{\mathcal{K}_{#1,#2,#3}}}
\newcommand\BKernkl{\ensuremath{\mathcal{K}}}
\newcommand\LKernkl{\ensuremath{\mathcal{L}}}
\newcommand\GroupPres[2]{\ensuremath{\langle \text{$#1$ $|$ $#2$} \rangle}}
\newcommand\GroupLPres[4]{\ensuremath{\langle \text{$#1$ $|$ $#2$ $|$ $#3$ $|$ $#4$} \rangle}}
\DeclareMathOperator{\conj}{conj}
\newcommand\Conj[1]{\ensuremath{\ldbrack #1 \rdbrack}}
\newcommand\Mul[2]{\ensuremath{M_{#1,#2}}}
\newcommand\Con[2]{\ensuremath{C_{#1,#2}}}
\newcommand\Id[1]{\ensuremath{\mathrm{id}_{#1}}}
\newcommand{\Vc}{V_{C}}
\newcommand{\BVc}{\overline{V}_{C}}
\newcommand{\Vcy}{V_{Y}}
\newcommand{\BVcy}{\overline{V}_{Y}}
\newcommand{\co}{\colon\,}
\newcommand{\WedgeTwo}{\ensuremath{{\bigwedge}^{\hspace{-3pt}2}}}
\begin{document}

\maketitle

\begin{abstract}
The Birman exact sequence describes the effect on the mapping class group of a surface with boundary of
gluing discs to the boundary components.  We construct an analogous exact sequence for the automorphism
group of a free group.  For the mapping class group, the kernel of the Birman exact sequence is a surface braid group.  We prove
that in the context of the automorphism group of a free group, the natural kernel is finitely generated.
However, it is not finitely presentable; indeed, we prove that its second rational homology group has 
infinite rank by constructing an explicit infinite collection of linearly independent abelian cycles.  
We also determine the abelianization of our kernel and build a simple infinite presentation for it.  
The key to many of our proofs are several new generalizations of the Johnson homomorphisms.
\end{abstract}

\section{Introduction}
\label{section:introduction}

When studying the mapping class group $\Mod(\Sigma)$ of a closed orientable surface $\Sigma$, one
is led inexorably to the mapping class groups of surfaces with boundary.  For instance, often
phenomena are ``concentrated'' on a subsurface of $\Sigma$, and thus they ``live'' in the mapping
class group of the subsurface (which is a surface with boundary).  The key tool for understanding
the mapping class group of a surface with boundary is the {\em Birman exact sequence}.  If
$S$ is a surface with $p$ boundary components and $\hat{S}$ is the closed surface that results
from gluing discs to the boundary components of $S$, then there is a natural surjection
$\psi \co \Mod(S) \rightarrow \Mod(\hat{S})$.  Namely, if $f \in \Mod(S)$, then
$\psi(f)$ is obtained by extending $f$ over the glued-in discs.  Excluding some
degenerate low-genus cases, a basic result of Birman \cite{BirmanSequence} shows that
$\Ker(\psi)$ is isomorphic to the $p$-strand braid group $B_p(\hat{S})$ on $\hat{S}$
(or possibly a slight modification of $B_p(\hat{S})$ depending
on your conventions for mapping class groups of surfaces with boundary).
This is summarized in the exact sequence
\begin{equation}
\label{eqn:birmanmod}
1 \longrightarrow B_p(\hat{S}) \longrightarrow \Mod(S) \longrightarrow \Mod(\hat{S}) \longrightarrow 1.
\end{equation}
The purpose of this paper is to develop a similar exact sequence for the automorphism group of a free group.

\paragraph{Automorphisms of free groups with boundary, motivation.}
We begin by giving a definition of the ``automorphism group of a free group with boundary''.
A motivation for this definition is as follows.  A prototypical example of where
mapping class groups of surfaces with boundary arise is as the stabilizer of a 
collection of homotopy classes of non-nullhomotopic disjoint simple closed
curves $\gamma_1,\ldots,\gamma_k$ on a genus $g$ surface $\Sigma_g$ 
(such a collection forms a simplex in the {\em curve complex}).  The stabilizer of the $\gamma_i$ is then
essentially the mapping class group of the result of cutting $\Sigma_g$ along the $\gamma_i$.  We
wish to give an algebraic description of such collections. 

Recall that there is a bijection between conjugacy classes
in $\pi_1(\Sigma_g)$ and homotopy classes of oriented closed curves on $\Sigma_g$.  We want
to understand which collections of conjugacy classes in $\pi_1(\Sigma_g)$ correspond
to collections $\gamma_1,\ldots,\gamma_k$ 
of homotopy classes of disjoint non-nullhomotopic simple closed curves
on $\Sigma_g$ (oriented in some way).  The description is simplest if we restrict to collections of $\gamma_i$
such that $\gamma_1 \cup \cdots \cup \gamma_k$ does not separate $\Sigma_g$.
Elements $a_1,b_1,\ldots,a_g,b_g \in \pi_1(\Sigma_g)$ form a
{\em standard basis} for $\pi_1(\Sigma_g)$ if they generate $\pi_1(\Sigma_g)$ and
satisfy the surface relation $[a_1,b_1]\cdots[a_g,b_g]=1$.  We then have the following folklore fact.
As notation, if $G$ is a group and $g \in G$, then $\Conj{g}$ will denote
the conjugacy class of $g$.

\begin{proposition}
\label{proposition:curvesdesc}
A collection $c_1,\ldots,c_k$ of conjugacy classes in $\pi_1(\Sigma_g)$ corresponds
to a collection $\gamma_1,\ldots,\gamma_k$ of homotopy classes of disjoint oriented non-nullhomotopic
simple closed curves on $\Sigma_g$ such that $\Sigma_g \setminus (\gamma_1 \cup \cdots \cup \gamma_k)$
is connected if and only if there exists a standard basis $a_1,b_1,\ldots,a_g,b_g$ for $\pi_1(\Sigma_g)$
with $c_i = \Conj{a_i}$ for $1 \leq i \leq k$.
\end{proposition}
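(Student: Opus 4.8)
The plan is to deduce both implications from two standard facts about closed surfaces: the Dehn--Nielsen--Baer theorem, which identifies $\Mod(\Sigma_g)$ with the index-two subgroup of $\Out(\pi_1(\Sigma_g))$ consisting of those outer automorphisms that act trivially on $\HH_2(\Sigma_g) \cong \Z$ (equivalently, preserve rather than invert the class of the surface relation), and the change-of-coordinates principle, which controls the transitivity of the $\Mod(\Sigma_g)$-action on configurations of simple closed curves. First I would fix a geometric model once and for all: realize based simple closed curves $\alpha_1,\beta_1,\ldots,\alpha_g,\beta_g$ so that $\alpha_i$ meets $\beta_i$ once, distinct handles are disjoint, and $\{\alpha_i,\beta_i\}$ is a standard basis for $\pi_1(\Sigma_g)$. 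In this model $\alpha_1,\ldots,\alpha_k$ are disjoint non-nullhomotopic simple closed curves whose complement is a connected surface of genus $g-k$ with $2k$ boundary components; in particular the statement forces $k \leq g$. Throughout I use the standard correspondence between free homotopy classes of oriented closed curves and conjugacy classes in $\pi_1(\Sigma_g)$.

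For the ``if'' direction, suppose $a_1,b_1,\ldots,a_g,b_g$ is a standard basis with $c_i = \Conj{a_i}$. The assignment $\alpha_i \mapsto a_i$, $\beta_i \mapsto b_i$ descends to an endomorphism $\phi$ of $\pi_1(\Sigma_g)$, since the target generators satisfy the defining surface relation; as the $a_i,b_i$ generate, $\phi$ is surjective, hence (surface groups being Hopfian) an automorphism. Because $\phi$ sends the standard basis to one satisfying the same signed surface relation, it acts trivially on $\HH_2(\Sigma_g)$ and is therefore orientation-preserving, so Dehn--Nielsen--Baer produces an orientation-preserving homeomorphism $f$ of $\Sigma_g$ inducing $\phi$ (after a choice of path to the basepoint). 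Then $f(\alpha_1),\ldots,f(\alpha_k)$ are disjoint non-nullhomotopic simple closed curves with connected complement, and oriented via $f$ they realize the free homotopy classes $\Conj{a_i} = c_i$.

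For the ``only if'' direction, start with oriented disjoint non-nullhomotopic simple closed curves $\gamma_1,\ldots,\gamma_k$ with $\Sigma_g \setminus (\gamma_1 \cup \cdots \cup \gamma_k)$ connected. By the change-of-coordinates principle applied to the two ordered systems $(\alpha_1,\ldots,\alpha_k)$ and $(\gamma_1,\ldots,\gamma_k)$ of disjoint non-separating simple closed curves with connected complement, there is an orientation-preserving homeomorphism $h$ of $\Sigma_g$ carrying $\alpha_i$ to $\gamma_i$ as oriented curves. Since $h$ is orientation-preserving, $h_*$ sends the standard basis $\{\alpha_i,\beta_i\}$ to another standard basis $\{h_*(\alpha_i),h_*(\beta_i)\}$, and $\Conj{h_*(\alpha_i)}$ is the free homotopy class of the oriented curve $\gamma_i$, namely $c_i$. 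This is the required standard basis; note that here $\Mod(\Sigma_g)$ supplies the homeomorphism directly, so no appeal to the realization half of Dehn--Nielsen--Baer is needed.

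The main obstacle is the change-of-coordinates step, and within it the matching of orientations. The underlying unoriented transitivity is proved by induction on $k$: cut $\Sigma_g$ along $\gamma_1$ and along $\alpha_1$, use the classification of compact surfaces (homeomorphism type determined by genus and number of boundary components) to produce a homeomorphism between the two connected cut surfaces matching the remaining curves, and glue it up. Matching the orientation of each $\gamma_i$ is then arranged by precomposing with an orientation-preserving homeomorphism that reverses the orientation of a chosen curve $\alpha_i$ while fixing the orientations of the others; such a homeomorphism exists because $\alpha_i$ is non-separating, so the surface cut along $\alpha_i$ admits an orientation-preserving self-homeomorphism swapping the two copies of $\alpha_i$ in its boundary. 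I would also take care with basepoints, since $\Mod(\Sigma_g)$ acts on $\pi_1(\Sigma_g)$ only up to inner automorphism; but this indeterminacy is precisely the passage to conjugacy classes, so it is invisible to the statement.
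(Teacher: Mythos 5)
Your overall route is essentially the paper's: both arguments rest on the Dehn--Nielsen--Baer theorem combined with the change-of-coordinates principle (the paper merely packages the two implications into a single orbit argument, showing that the set of tuples arising from standard bases and the set of oriented curve systems with connected complement are corresponding orbits of $\Out(\pi_1(\Sigma_g))$ and the mapping class group that share a point). However, there is one genuinely incorrect step in your ``if'' direction. You claim that because $\phi$ carries the reference standard basis to another standard basis ``satisfying the same signed surface relation,'' it acts trivially on $\HH_2(\Sigma_g)$ and is therefore orientation-preserving. This is false: \emph{every} automorphism of $\pi_1(\Sigma_g)$ carries standard bases to standard bases, since applying an automorphism to the relation $[a_1,b_1]\cdots[a_g,b_g]=1$ yields a relation of exactly the same form; the paper's definition of a standard basis imposes no signed condition that could detect orientation (the sign lives at the level of the free group, as whether the relator word is sent to a conjugate of itself or of its inverse, and nothing in the hypothesis of the proposition controls that). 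Concretely, in genus $1$ the tuple $(b,a)$ is a standard basis, yet $a \mapsto b$, $b \mapsto a$ has determinant $-1$ on homology; in any genus, the image of the reference basis under an orientation-reversing mapping class is a standard basis whose associated $\phi$ is orientation-reversing. For such an input, your appeal to the orientation-preserving form of Dehn--Nielsen--Baer produces no homeomorphism, and the argument stalls.

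The repair is short, which is why this is a shallow gap rather than a fatal one: invoke the extended Dehn--Nielsen--Baer theorem, $\Out(\pi_1(\Sigma_g)) \cong \Mod^{\pm}(\Sigma_g)$ (this is also the version the paper is implicitly using), which realizes every automorphism by a homeomorphism that may reverse orientation. Orientation-preservation of $f$ is never needed for your conclusion: for a homeomorphism $f$ of either type, the curves $f(\alpha_1),\ldots,f(\alpha_k)$ are disjoint, simple, non-nullhomotopic, have connected complement, and, oriented via $f$, represent the classes $\Conj{\phi(\alpha_i)} = c_i$. With that substitution your ``if'' direction is correct, and your ``only if'' direction --- oriented change of coordinates, including the boundary-swapping device for reversing the orientation of a single non-separating curve (where you should really cut along \emph{all} the $\alpha_j$ at once, so that the swap can be chosen to respect the rest of the configuration) --- is exactly the transitivity statement the paper cites from Farb--Margalit.
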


\noindent
We do not know a reference for Proposition~\ref{proposition:curvesdesc}, so we include a proof in Appendix \ref{appendix:curvesdesc}.

Proposition~\ref{proposition:curvesdesc} suggests defining the automorphism group of a free group with boundary to be the
subgroup of the automorphism group of a free group fixing the conjugacy classes of some fixed partial basis
for the free group.

\paragraph{Automorphisms of free groups with boundary, definition.}
Let $F_{n,k,l}$ be the free group on letters $\{x_1,\ldots,x_n,y_1,\ldots,y_k,z_1,\ldots,z_l\}$ and set
\[X = \{x_1,\ldots,x_n\} \quad \text{and} \quad Y = \{y_1,\ldots,y_k\} \quad \text{and} \quad Z = \{z_1,\ldots,z_l\}.\]
We define
\[\AutFB{n}{k}{l} = \Set{$f \in \Aut(F_{n,k,l})$}{$\Conj{f(v)} = \Conj{v}$ for $v \in Y \cup Z$}.\]
The reason for separating the roles of the $y_i$ and the $z_j$ will become apparent shortly.  
The group $\AutFB{n}{k}{l}$ should be viewed as the automorphism group of a free group on $n$ 
letters with $k+l$ boundary components.  

\begin{remark}
There have been many proposals for analogues of the curve complex for $\Aut(F_n)$ (see, e.g.,\ 
\cite{BestvinaFeighn, DayPutmanComplex,HatcherSphereComplex, HatcherVogtmannFreeFactors, HatcherVogtmannCerf, KapovichLustig}).
The groups $\AutFB{n}{k}{l}$ form the simplex stabilizers for the action of $\Aut(F_{n,k,l})$ on the {\em complex
of partial bases}, which is one of these analogues.  In \cite{DayPutmanComplex}, the results in the current
paper are used to prove some topological results about this complex.  The simplex stabilizers of several other curve complex
analogues are closely related to the groups $\AutFB{n}{k}{l}$.
\end{remark}

\begin{remark}
The group $\AutFB{0}{k}{l}$ is known as the 
{\em pure symmetric automorphism group} of the free group $F_{0,k,l}$.  It was introduced by 
McCool \cite{McCoolSymmetric} and has been the subject of much investigation.  McCool was
also the first to study the general group $\AutFB{n}{k}{l}$, and in \cite{McCoolPresentation} he
proved that it was finitely presentable.
\end{remark}

\begin{remark}
In \cite{WahlFromModToAut}, Wahl introduced a slightly different definition of an automorphism group of a free
group with boundary.  These groups were further studied in papers of Hatcher-Wahl \cite{HatcherWahl} and Jensen-Wahl \cite{JensenWahl}.
The paper \cite{JensenWahl} also discussed the groups $\AutFB{n}{k}{l}$ we consider here and constructed presentations
for them; see Theorem \ref{theorem:jensenwahl} below.
Wahl's groups are abelian extensions of $\AutFB{n}{k}{l}$, and it is not hard to deduce from our theorems
analogous results for her groups.  In the context of the mapping class group, there are likewise two natural ways
to deal with boundary components.  One can require that mapping classes fix the boundary pointwise or merely require them
to fix each boundary component setwise.  In the former case (to which Wahl's definition is analogous), the Dehn twists about the boundary components are nontrivial,
while in the latter case (to which our definition is analogous) they are trivial.
\end{remark}

\begin{remark}
Another reasonable definition of the automorphism group of a free group with boundary would require
the automorphisms to fix the $y_i$ and $z_j$ on the nose (rather than merely up to conjugacy).  It
turns out that this leads to a much more poorly behaved theory -- see Theorem \ref{maintheorem:alternatekernel}
below.
\end{remark}

\paragraph{The Birman exact sequence.}
Let $N$ be the subgroup of $F_{n,k,l}$ normally generated by $Y$.  There is an evident isomorphism
$F_{n,k,l}/N \cong F_{n,0,l}$, and we will henceforth identify these groups.  We thus have a short
exact sequence
$$1 \longrightarrow N \longrightarrow F_{n,k,l} \longrightarrow F_{n,0,l} \longrightarrow 1.$$
The group $\AutFB{n}{k}{l}$ preserves $N$, so we get a map $\rho \co \AutFB{n}{k}{l} \rightarrow \Aut(F_{n,0,l})$
whose image is clearly contained in $\AutFB{n}{0}{l} < \Aut(F_{n,0,l})$.  The map
$\rho \co \AutFB{n}{k}{l} \rightarrow \AutFB{n}{0}{l}$ has a right inverse 
$\psi \co \AutFB{n}{0}{l} \rightarrow \AutFB{n}{k}{l}$ defined as follows.  Consider
$f \in \AutFB{n}{0}{l}$.  Define $\psi(f) \in \AutFB{n}{k}{l}$ to be the automorphism
which has the following behavior on a generator $s \in X \cup Y \cup Z$.
$$\psi(f)(s) = 
\begin{cases}
s & \text{if $s \in Y$,}\\
f(s) & \text{if $s \in X \cup Z$.}
\end{cases}$$
It is clear that this defines an element of $\AutFB{n}{k}{l}$ such that $\rho(\psi(f)) = f$.  We
conclude that $\rho$ is surjective.

Let $\BKer{n}{k}{l}$ be the kernel of $\rho$.  We thus have a split short exact sequence
\begin{equation}
\label{eqn:birmanaut}
1 \longrightarrow \BKer{n}{k}{l} \longrightarrow \AutFB{n}{k}{l} \stackrel{\rho}{\longrightarrow} \AutFB{n}{0}{l} \longrightarrow 1.
\end{equation}
The exact sequence \eqref{eqn:birmanaut} will be our analogue of the Birman exact sequence \eqref{eqn:birmanmod}.  

\paragraph{Generators for the kernel.}
To understand \eqref{eqn:birmanaut}, we must study the kernel group $\BKer{n}{k}{l}$.  We
begin by giving generators for it.  For distinct $v,w \in X \cup Y \cup Z$ and $\epsilon = \pm 1$, let
$\Mul{v^{\epsilon}}{w}$ and $\Con{v}{w}$ be the elements of $\Aut(F_{n,k,l})$ that are defined by
\[\Mul{v^{\epsilon}}{w}(s) =
\begin{cases}
wv       & \text{if $s=v$ and $\epsilon=1$}\\
vw^{-1}  & \text{if $s=v$ and $\epsilon=-1$}\\
s        & \text{otherwise}
\end{cases}
\quad \text{and} \quad
\Con{v}{w}(s) =
\begin{cases}
w v w^{-1} & \text{if $s=v$}\\
s          & \text{otherwise}
\end{cases}\]
for $s \in X \cup Y \cup Z$.
Our first result is as follows.
\begin{maintheorem}
\label{maintheorem:generators}
The group $\BKer{n}{k}{l}$ is generated by the finite set
\begin{align*}
S_K =
 & \Set{$\Mul{x^{\epsilon}}{y}$}{$x \in X$, $y \in Y$, $\epsilon = \pm 1$}\cup \Set{$\Con{z}{y}$}{$z \in Z$, $y \in Y$}\\
 &\quad \cup \Set{$\Con{y}{v}$}{$y \in Y$, $v \in X \cup Y \cup Z$, $y \neq v$}.
\end{align*}
\end{maintheorem}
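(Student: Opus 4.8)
The plan is to leverage a known finite generating set for the ambient group $\AutFB{n}{k}{l}$ together with the splitting $\psi$ of \eqref{eqn:birmanaut}. By Theorem~\ref{theorem:jensenwahl}, $\AutFB{n}{k}{l}$ has a finite generating set $S$, which we may take to consist of the Whitehead automorphisms $\Mul{v^{\epsilon}}{w}$ and $\Con{v}{w}$ permitted by the conjugacy constraints defining $\AutFB{n}{k}{l}$, together with the permutations and inversions of the $x_i$. First I would record the elementary fact that for a split exact sequence $1 \to K \to G \xrightarrow{\rho} Q \to 1$ with section $\psi$ and $G = \GenBy{S}$, the kernel $K$ is the normal closure in $G$ of $\{s\,\psi(\rho(s))^{-1} : s \in S\}$: indeed, modulo this normal closure each $s$ becomes $\psi(\rho(s))$, so the quotient maps isomorphically to $Q$. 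This reduces the theorem to two tasks: (a) every difference $s\,\psi(\rho(s))^{-1}$ lies in $\GenBy{S_K}$, and (b) $\GenBy{S_K}$ is normal in $\AutFB{n}{k}{l}$.

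For task (a) I observe that $\psi(\rho(s)) = s$ precisely when $s$ lies in the image of $\psi$, that is, when $s$ fixes $Y$ pointwise and preserves the free factor $\GenBy{X \cup Z}$. Every generator not involving $Y$ — the moves $\Mul{x^{\epsilon}}{x'}$ and $\Mul{x^{\epsilon}}{z}$, the partial conjugations $\Con{z}{x}$ and $\Con{z}{z'}$, and the permutations and inversions of the $x_i$ — has this property, so its difference is trivial. Each remaining generator involves $Y$ and maps to the identity under $\rho$ (since $\rho$ kills $N$), hence equals its own difference; these are exactly $\Mul{x^{\epsilon}}{y}$, $\Con{z}{y}$, and $\Con{y}{v}$, i.e.\ exactly the elements of $S_K$. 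Thus every difference lies in $\GenBy{S_K}$, and consequently $\BKer{n}{k}{l}$ is the normal closure of $S_K$ in $\AutFB{n}{k}{l}$.

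The heart of the matter is task (b). Here I would exploit the observation that every generator in $S$ lies either in $\GenBy{S_K}$ or in the image of $\psi$, with no generator mixing the two (the identity $\Con{x}{y} = \Mul{x^{-1}}{y} \circ \Mul{x}{y}$ shows in addition that partial conjugations of the $x_i$ by $y$ lie in $\GenBy{S_K}$, which is convenient when such elements arise as conjugates). Since conjugation by an element of $\GenBy{S_K}$ automatically preserves $\GenBy{S_K}$, normality reduces to the finite verification that $s\,\sigma\,s^{-1} \in \GenBy{S_K}$ for every $\sigma \in S_K$ and every image-of-$\psi$ generator $s$ and its inverse. The mechanism is uniform: such a conjugation replaces an occurrence of a letter $y_t$ by a conjugate of it, and the resulting conjugated transvection or partial conjugation is rewritten using the $\Con{y}{v}$ generators. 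For instance, conjugating $\Mul{x_i}{y_t}$ by $\Mul{x_i}{x_{i'}}$ produces the automorphism $x_i \mapsto (x_{i'}^{-1} y_t x_{i'}) x_i$, and a direct check shows this equals $\Con{y_t}{x_{i'}}^{-1} \circ \Mul{x_i}{y_t} \circ \Con{y_t}{x_{i'}} \in \GenBy{S_K}$. Granting (b), the subgroup $\GenBy{S_K}$ is normal and contains every difference $s\,\psi(\rho(s))^{-1}$, hence contains their normal closure $\BKer{n}{k}{l}$; as the reverse inclusion is clear, this gives $\BKer{n}{k}{l} = \GenBy{S_K}$.

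The main obstacle is the bookkeeping in task (b): one must organize the conjugation computations over all pairs consisting of an image-of-$\psi$ generator and an $S_K$ generator, the delicate cases being exactly those in which the conjugating generator moves one of the letters appearing in the chosen $S_K$ generator (as in the sample above, and in the parallel computations for $\Con{z}{y}$ and $\Con{y}{v}$ when $s$ conjugates the relevant $z$ or permutes or transvects the relevant $x$). Each such case produces a conjugate of $y_t$ that must be matched with an explicit product of $\Con{y}{v}$'s, and confirming that these always close up inside $\GenBy{S_K}$ — in particular that no transvection or conjugation by a genuinely new letter escapes the subgroup — is where essentially all of the content of the theorem resides.
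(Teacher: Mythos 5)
Your proposal is correct and follows essentially the same route as the paper's proof: both reduce the theorem to (i) showing that $S_K$ normally generates $\BKer{n}{k}{l}$ and (ii) showing that $\GenBy{S_K}$ is normal in $\AutFB{n}{k}{l}$, with (ii) carried out by exactly the conjugation computations you describe (the paper records the expressions for $s t s^{-1}$, $s \in S_Q^{\pm 1}$, $t \in S_K$, in Table~\ref{table:action}, and your sample identity agrees with the corresponding entry there). The only variation is in (i): the paper deduces normal generation by observing that setting $t=1$ for all $t \in S_K$ in the Jensen--Wahl presentation of $\AutFB{n}{k}{l}$ yields the Jensen--Wahl presentation of $\AutFB{n}{0}{l}$, whereas you deduce it from the splitting $\psi$ via the (correct) elementary fact about split extensions, which has the minor advantage of using only the generating set of Theorem~\ref{theorem:jensenwahl} rather than the full list of relations.
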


\paragraph{The kernel is not finitely presentable.}
Theorem \ref{maintheorem:generators} might suggest to the reader that 
$\BKer{n}{k}{l}$ is a well-behaved group, but this hope is dashed by the following theorem.

\begin{maintheorem}
\label{maintheorem:notfinitepres}
If $k \geq 1$ and $n+l \geq 2$, then $\BKer{n}{k}{l}$ is not finitely presentable.
In fact, $H_2(\BKer{n}{k}{l};\Q)$ has infinite rank.
\end{maintheorem}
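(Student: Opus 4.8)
The plan is to first reduce non-finite-presentability to the homological claim: a finitely presentable group $G$ has $H_2(G;\Q)$ of finite rank (a finite presentation yields an Eilenberg--MacLane space with finite $2$-skeleton, so $H_2(G;\Z)$ is finitely generated), so it suffices to prove that $H_2(\BKer{n}{k}{l};\Q)$ has infinite rank. I would do this by writing down an explicit infinite family of abelian cycles and detecting their linear independence with a generalization of the Johnson homomorphism. Set $N = \NormalSpan{Y}$ and $Q = F_{n,k,l}/N \cong F_{n,0,l}$. By definition every element of $\BKer{n}{k}{l}$ acts trivially on $Q$, so $\BKer{n}{k}{l}$ is an $\IA$/Torelli-type group \emph{relative to} $N$, and it should carry a Johnson-type homomorphism recording the first-order action of $f$ on $N$. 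Concretely, $N$ is free and $H_1(N) \cong \Z[Q]^{k}$ as a $\Z[Q]$-module, free on the classes of $Y$; the data $f \mapsto (f_\ast - \mathrm{id})$ on $H_1(N)$ together with $\overline{f(s)s^{-1}} \in H_1(N)$ for $s \in X \cup Z$ should assemble into a crossed homomorphism $\tau \co \BKer{n}{k}{l} \to M$, where $M$ is a $\Q[Q]$-module that is infinite-dimensional over $\Q$.

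Because $\BKer{n}{k}{l}$ is finitely generated (Theorem \ref{maintheorem:generators}), $H_1(\BKer{n}{k}{l};\Q)$ is finite-dimensional, so the infinitude of $H_2$ cannot be seen through $\WedgeTwo H_1$; the abelian cycles I use must therefore be degenerate (lying in the kernel of $H_2 \to \WedgeTwo H_1$) and be detected by a secondary, module-valued invariant built from $\tau$. The splitting \eqref{eqn:birmanaut} gives an action of $\AutFB{n}{0}{l}$ on $H_\ast(\BKer{n}{k}{l};\Q)$ by conjugation through $\psi$; restricting to the inner automorphisms $\iota_q$ ($q \in Q$), which lie in $\AutFB{n}{0}{l}$ since they fix every conjugacy class, makes $H_\ast(\BKer{n}{k}{l};\Q)$ a $\Q[Q]$-module, and $\tau$ can be arranged to be $Q$-equivariant with $\iota_q$ acting as multiplication by $q$ on $M$. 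Cup-producting $\tau$ with itself and evaluating on $2$-cycles should then yield a $Q$-equivariant linear map $\Theta \co H_2(\BKer{n}{k}{l};\Q) \to W$ to an infinite-dimensional $\Q[Q]$-module $W$, whose value on the abelian cycle $\sigma_{a,b}$ of a commuting pair $a,b$ is, up to a correction coming from the module action, the antisymmetrized product $\tau(a)\otimes\tau(b) - \tau(b)\otimes\tau(a)$.

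Next I would produce a single commuting pair $a,b \in \BKer{n}{k}{l}$ drawn from the generators $S_K$ of Theorem \ref{maintheorem:generators} --- for instance two of the listed generators with disjoint support, such as $\Mul{x}{y}$ together with $\Mul{x'}{y}$ or with $\Con{z}{y}$, which commute by direct check --- chosen so that $w_0 := \Theta(\sigma_{a,b}) \neq 0$. Here $k \geq 1$ supplies a boundary letter $y \in Y$, and $n+l \geq 2$ supplies two independent non-boundary letters out of which to build the pair and make its invariant nonzero. Fixing an infinite-order generator $u \in X \cup Z$ (which exists once $Q$ is infinite), I then set $\sigma_m := \psi(\iota_u)^{m} \cdot \sigma_{a,b} = \psi(\iota_{u^m}) \cdot \sigma_{a,b}$, which is again the abelian cycle of the commuting conjugated pair. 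By $Q$-equivariance $\Theta(\sigma_m) = u^m \cdot w_0$, and since the powers $\{u^m\}$ are linearly independent in $\Q[Q]$, the vectors $\{u^m \cdot w_0\}_{m \geq 0}$ span an infinite-dimensional subspace of $W$; hence $\{\sigma_m\}_{m \geq 0}$ are linearly independent in $H_2(\BKer{n}{k}{l};\Q)$, which is therefore of infinite rank.

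The hard part, I expect, is twofold. First, one must construct the generalized Johnson homomorphism $\tau$ on all of $\BKer{n}{k}{l}$ and verify that the cup-product pairing descends to a well-defined, $Q$-equivariant map $\Theta$ on $H_2$; this needs care precisely because the action of $\BKer{n}{k}{l}$ on $M$ is genuinely nontrivial, so $\tau$ is only a crossed homomorphism and the naive bilinear formula for $\Theta$ acquires correction terms that must be shown not to interfere with the detection. Second, and most concretely, one must compute $\Theta(\sigma_{a,b})$ for the chosen pair and prove $w_0 \neq 0$; the subsequent independence of $\{u^m w_0\}$ reduces to the linear independence of $\{u^m\}$ in $\Q[Q]$, but establishing that the base abelian cycle survives in $H_2$ --- that it is annihilated neither by the correction terms nor by relations in $\BKer{n}{k}{l}$ --- is the crux of the argument.
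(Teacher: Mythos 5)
Your high-level plan has the same skeleton as the paper's proof: reduce non-finite-presentability to showing that $\HH_2(\BKer{n}{k}{l};\Q)$ has infinite rank, build a Johnson-type crossed homomorphism on $\BKer{n}{k}{l}$ valued in an infinite-rank module made out of $\HH_1(N)$ (the paper uses the quotient $N_Y/M$, free abelian on symbols $y^v$ indexed by $v\in\overline{V}/\overline{V}_Y$), and detect an infinite family of abelian cycles obtained by translating a single commuting pair. The reduction, the crossed homomorphism, the cycles themselves (up to homology yours are of the same nature as the paper's $\mu_m$, since $\psi(\iota_{u^m})$ differs from the inner automorphism $\iota_{u^m}$ of $F_{n,k,l}$ by an element of $\BKer{n}{k}{l}$), and your observation that these classes die in $\WedgeTwo\HH_1$ and hence need a secondary invariant, are all sound.

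The genuine gap is the detection mechanism, and it is not a deferrable technicality: as stated it fails. Note the identity $\psi(\iota_q)=\iota_q\circ C_q$, where $\iota_q$ is conjugation by $q$ and $C_q$ is the automorphism sending each $y_i\mapsto q^{-1}y_iq$ and fixing $X\cup Z$; crucially $C_q\in\BKer{n}{k}{l}$. The ``multiplication by $q$'' structure on $\HH_1(N)\cong\Z[Q]^k$ is the action of $\iota_q$, but the element you can actually conjugate by is $\psi(\iota_q)$, and the factor $C_q$ contributes a compensating right multiplication: $\psi(\iota_q)$ acts on $\HH_1(N)$ by conjugating exponents, and on the paper's module $N_Y/M$ (abelianized exponents) it acts trivially. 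So the equivariance statement you need is ill-posed with the intended target. Worse, the hypothesis behind it fails for your construction: any $\Theta$ obtained by capping untwisted $\HH_2(\BKer{n}{k}{l};\Q)$ against a fixed cohomology class with coefficients in a $\BKer{n}{k}{l}$-module lands in the coinvariants of that module, and is equivariant only if the class is conjugation-invariant. Your class is not invariant --- conjugation by $\psi(\iota_u)$ moves the evaluation points of the Johnson data, sending the cochain $f\mapsto\overline{f(s)s^{-1}}$ to $f\mapsto\overline{f(u^{-1}su)(u^{-1}su)^{-1}}$ --- so no equivariance formula holds and the identity $\Theta(\sigma_m)=u^m\cdot w_0$ has no justification; while if the class were invariant, the induced action on the coinvariants would be trivial and $\Theta$ would take the same value on every $\sigma_m$, detecting nothing. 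Repairing this forces exactly the work the paper does: one must use a whole infinite family of honest $\Q$-valued classes (morally, the translation orbit of one cup-product class) and compute the full pairing matrix against the cycles. The paper restricts the crossed homomorphism to $\LKernkl=\Ker(J'_y)$, where it becomes a genuine homomorphism with $\Z$-valued coordinates $\alpha_r$, forms $[\alpha_r]\cup[\alpha_0]\in\HH^2(\LKernkl;\Q)$, and extends each to a cocycle $\zeta_r$ on all of $\BKer{n}{k}{l}$ by summing over the infinite-index cosets of $\LKernkl$ --- an averaging with no transfer map behind it, which makes sense only because of the local-finiteness Lemma \ref{lemma:finitelymany}, and which yields $\omega([\zeta_r],\mu_m)=2$ if $r=m$ and $0$ otherwise (Lemma \ref{lemma:zeta}). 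That construction, not the nonvanishing of $w_0$ (an easy calculation once the classes exist), is the crux missing from your proposal. A further symptom of the same issue: for your symmetric pair $(\Mul{x}{y},\Mul{x'}{y})$ translated by a letter $u\notin\{x,x'\}$, both legs transform identically and every coinvariant-valued invariant takes the same value on all $\sigma_m$; detection requires the translation to act asymmetrically on the two legs, which is why the paper conjugates only one leg of its pair, by $\Con{y}{a}^m\in\BKer{n}{k}{l}$.
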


\begin{remark}
Theorem \ref{maintheorem:notfinitepres} should be contrasted to what happens in the mapping
class group of a surface, where the kernel of the Birman exact sequence is finitely presented (and, in fact, has a compact
Eilenberg-MacLane space, so all of its homology groups have finite rank).
\end{remark}

Our proof of Theorem \ref{maintheorem:notfinitepres} actually yields an explicit infinite
set of linearly independent classes in $\HH_2(\BKer{n}{k}{l};\Q)$.  These classes
arise as {\em abelian cycles}.  If $G$ is a group and $x,y \in G$ are commuting elements,
then there is a homomorphism $i \co \Z^2 \rightarrow G$ taking the generators of $\Z^2$
to $x$ and $y$.  The $2$-torus is an Eilenberg-MacLane space for $\Z^2$, so $\HH_2(\Z^2;\Q) \cong \Q$.
Letting $c \in \HH_2(\Z^2;\Q)$ be the standard generator, the element $i_{\ast}(c) \in \HH_2(G;\Q)$
is known as the abelian cycle determined by $x$ and $y$.  Returning to $\BKer{n}{k}{l}$,
the assumptions of Theorem \ref{maintheorem:notfinitepres} imply that we can find
$y \in Y$ and $a,b \in X \cup Z$ such that $a \neq b$.  It is easily verified that the elements
$$\Con{y}{a}^m \Con{b}{y} \Con{y}{a}^{-m} \quad \quad \text{and} \quad \quad \Con{a}{y} \Con{b}{y}$$
of $\BKer{n}{k}{l}$ commute for all $m \geq 1$.  Letting $\mu_m \in \HH_2(\BKer{n}{k}{l};\Q)$ be the
associated abelian cycle, we will prove that the $\mu_m$ are linearly independent.

\begin{remark}
A priori, it is not even clear that the $\mu_m$ are nonzero cohomology classes.
\end{remark}

\paragraph{Comment about proof of Theorem \ref{maintheorem:notfinitepres}.}
The key step in proving that the $\mu_m$ are linearly independent is the construction
of elements $\zeta_m \in \HH^2(\BKer{n}{k}{l};\Q)$ which
are ``dual'' to the $\mu_m$.  The $\zeta_m$ are ``almost'' cup products of elements of $H^1$.
More precisely, there is a natural subgroup $\LKernkl < \BKer{n}{k}{l}$ and a homomorphism
$I \co \LKernkl \rightarrow A$, where $A$ is an infinite rank abelian group.  The homomorphism $I$
can be viewed as a generalization of the {\em Johnson homomorphism}, which 
is a well-known abelian quotient of the Torelli subgroup of the mapping class group of a surface (see
\S \ref{section:generalizedjohnson} for details).  We construct a sequence of
surjections $\alpha_m' \co A \rightarrow \Z$, and thus a sequence of surjections
$\alpha_m \co \LKernkl \rightarrow \Z$.  The cohomology class $[\alpha_m]$ is then
an element of $\HH^1(\LKernkl;\Q)$, and we can construct elements 
$$\eta_m = [\alpha_m] \cup [\alpha_0] \in \HH^2(\LKernkl;\Q).$$
The elements $\zeta_m \in \HH^2(\BKer{n}{k}{l};\Q)$ are constructed from $\eta_m$ by
a sort of averaging process similar to the classical transfer map, though the
transfer map cannot be used directly since $\LKernkl$ is an infinite-index subgroup
of $\BKer{n}{k}{l}$.

\paragraph{Relations in the kernel.}
In spite of Theorem \ref{maintheorem:notfinitepres}, 
it turns out that there are only five basic relations in $\BKer{n}{k}{l}$.
These appear as relations R1-R5 in Table~\ref{table:RKrelations}.  We have the following theorem.

\begin{maintheorem}[Informal]
\label{maintheorem:relations}
The group $\BKer{n}{k}{l}$ has a presentation $\GroupPres{S_K}{R}$, where $S_K$ is 
the generating set from Theorem \ref{maintheorem:generators} and $R$ consists of all relations 
``of the same type'' as relations R1-R5 in Table~\ref{table:RKrelations}.
\end{maintheorem}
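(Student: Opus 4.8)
The plan is to exploit both the finite presentability of the total group $\AutFB{n}{k}{l}$ --- established by McCool and made explicit by the Jensen--Wahl presentation of Theorem~\ref{theorem:jensenwahl} --- and the fact that the Birman exact sequence \eqref{eqn:birmanaut} is \emph{split}. Thus $\AutFB{n}{k}{l} \cong \BKernkl \rtimes \AutFB{n}{0}{l}$, and the problem becomes that of reading off a presentation of the kernel factor of a semidirect product from a presentation of the whole group. A naive Reidemeister--Schreier rewriting is unattractive here: since the quotient $\AutFB{n}{0}{l}$ is infinite, it would produce infinitely many generators, which one would then have to cut back down to the finite set $S_K$ of Theorem~\ref{maintheorem:generators} by a long sequence of Tietze moves. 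Instead I would run a direct semidirect-product comparison.

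First I would record the relations R1--R5 from Table~\ref{table:RKrelations} and check by direct computation with the defining formulas for $\Mul{v^{\epsilon}}{w}$ and $\Con{v}{w}$ that each holds in $\BKernkl$. Writing $\Gamma = \GroupPres{S_K}{R}$ for the abstract group on the generators $S_K$ modulo all relations of the types R1--R5, this yields a surjection $\Gamma \twoheadrightarrow \BKernkl$, and the content of the theorem is its injectivity. To build the comparison, let $Q = \AutFB{n}{0}{l}$ and consider the conjugation action of the section $\psi(Q)$ on $\BKernkl$ inside $\AutFB{n}{k}{l}$. On each generator in $S_K$ this action is given by an explicit word in $S_K$, and I would promote these formulas to an action of $Q$ on the abstract group $\Gamma$. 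The key point --- and the reason $R$ must consist of \emph{all} relations ``of the same type'' rather than a single representative of each --- is that for this action to descend to $\Gamma$ the family R1--R5 must be closed under the $Q$-action together with the symmetries in the indices $v,w$; verifying this closure is the first substantive step. Granting it, I would form the semidirect product $G' = \Gamma \rtimes Q$, which carries a surjection $G' \to \AutFB{n}{k}{l}$ that is the identity on $Q$ and is the surjection $\Gamma \twoheadrightarrow \BKernkl$ on the kernel factor.

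Finally I would produce an inverse. Using the Jensen--Wahl generating set of $\AutFB{n}{k}{l}$ --- which splits as the kernel generators $S_K$ together with lifts under $\psi$ of generators of $Q$ --- I would define a candidate homomorphism $\AutFB{n}{k}{l} \to G'$ by sending the $S_K$ generators into $\Gamma$ and the remaining generators into the $Q$-factor, and then verify that every Jensen--Wahl relator is satisfied in $G'$. Once this is checked, the two maps are mutually inverse on generators, so $G' \cong \AutFB{n}{k}{l}$; since this isomorphism commutes with the projections onto $Q$, it restricts to an isomorphism $\Gamma \cong \BKernkl$, which is exactly the asserted presentation.

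I expect the main obstacle to be this last step: rewriting each relator of the finite Jensen--Wahl presentation in terms of $S_K$ and the lifted quotient generators, and then deriving it from the relations R1--R5 and the action relations inside $G'$. This is a finite but intricate computation, and it is where the precise choice of the five relation types is forced --- one must check that R1--R5 are simultaneously \emph{sufficient} to kill all of Jensen--Wahl and \emph{closed} enough to support the $Q$-action, with no additional relation types creeping in. The bookkeeping across all index values and all $Q$-conjugates, rather than any conceptual difficulty, is the real work.
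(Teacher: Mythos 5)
Your overall architecture---realize $\AutFB{n}{k}{l}$ as $\Gamma \rtimes \AutFB{n}{0}{l}$ with $\Gamma = \GroupPres{S_K}{R}$, avoid Reidemeister--Schreier, and recover the kernel by comparing presentations---is exactly the paper's strategy (Proposition~\ref{proposition:autfnklpres} and Theorem~\ref{theorem:relations}). But the proposal stumbles on the one point where this theorem is genuinely delicate: what the set $R$ is. You read ``all relations of the same type'' as ``all index instances of R1--R5 rather than a single representative of each,'' which is the finite set $R_K$ of Table~\ref{table:RKrelations}, and your ``first substantive step'' is to verify that this family is closed under the $Q$-action. That verification fails, and the paper says so explicitly right after Proposition~\ref{proposition:autfnklpres}: the candidate action of $\AutFB{n}{0}{l}$ does \emph{not} preserve $R_K$. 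Concretely, conjugating the R1.2 relator $[\Mul{x_a}{y},\Con{z_i}{y}]$ by $\Mul{x_a}{z_i} \in S_Q$ produces, via Table~\ref{table:action}, the relator $[\Con{y}{z_i}^{-1}\Mul{x_a}{y}\Con{y}{z_i},\ \Con{z_i}{y}[\Mul{x_a}{y},\Con{y}{z_i}^{-1}]]$, which is of none of the forms R1--R5 (such words are the C1--C2 relations of Lemma~\ref{lemma:newcommutators}). Nor can these orbit relations even be \emph{consequences} of $R_K$: if they were, then $\BKer{n}{k}{l}$ would be the finitely presented group $\GroupPres{S_K}{R_K}$, contradicting Theorem~\ref{maintheorem:notfinitepres}---the apparent paradox the paper flags immediately after stating Theorem~\ref{maintheorem:relations}. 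The correct relator set is the infinite orbit $\hat{R}_K = \{f(w,r) : w \in F(S_Q),\ r \in R_K\}$, i.e.\ the R1--R5 relations taken with respect to \emph{all} bases, not all indices.

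Once $R = \hat{R}_K$, closure under the $F(S_Q)$-action is automatic (it is an orbit), and the genuinely substantive step---which your proposal never isolates---is to show that this action of the free group $F(S_Q)$ on $\Gamma$ descends to an action of the quotient $\AutFB{n}{0}{l} \cong \GroupPres{S_Q}{R_Q}$, i.e.\ that every relator in $R_Q$ (Nielsen's relations together with Q2--Q5) acts as the identity automorphism of $\Gamma$. This is Proposition~\ref{proposition:action}, whose proof is the long case-by-case computation of \S\ref{section:action}, carried out inside $\Gamma$ using the auxiliary relations C1--C2. Your remaining steps are sound and match the paper: verifying the Jensen--Wahl relators is Proposition~\ref{proposition:autfnklpres}; one then needs the small observation that, in the presence of $R_{\conj}$, the relators in $\hat{R}_K \setminus R_K$ are redundant, so that $\Gamma \rtimes \AutFB{n}{0}{l}$ and $\AutFB{n}{k}{l}$ have literally the same presentation; and the five lemma finishes. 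So the plan is repairable, but as written its pivotal verification is of a false statement, and the verification that is actually needed (triviality of the $R_Q$-action on $\Gamma$) is absent.
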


\begin{table}[ht]
\begin{tabular}{p{0.95\textwidth}}
\toprule
\begin{center}
{\bf Basic relations for $\BKer{n}{k}{l}$}
\end{center}

Let $S_K$ be the generating set from Theorem~\ref{maintheorem:generators}.
The set $R_K$ is the set of all relations between elements of $F(S_K)$ of the following forms:
\begin{itemize}
\item[R1.\ ] Three classes of relations saying that generators commute.
\begin{itemize}
\item[(1)\ ] $[\Mul{x^\epsilon}{v},\Mul{w^\delta}{y}]=1$
for $x,w\in X$, $\epsilon,\delta=\pm1$ and $v,y\in Y$ with $x^\epsilon\neq w^\delta$,
\item[(2)\ ] $[\Mul{x^\epsilon}{y},\Con{v}{z}]=1$
for $x\in X$, $\epsilon=\pm1$, $y\in Y$, $v\in Y\cup Z$ and $z\in X\cup Y\cup Z$ such that $\Con{v}{z}\in S_K$, $x\neq z$ and $v\neq y$,
\item[(3)\ ] $[\Con{u}{v},\Con{w}{z}]=1$
for $u,w\in Y\cup Z$ and $v,z\in X\cup Y\cup Z$ such that $\Con{u}{v}, \Con{w}{z}\in S_K$, $u\neq w, z$ and $w\neq v$;
\end{itemize}
\item[R2.\ ] $\Con{v}{x}^{-\epsilon} \Mul{x^{\epsilon}}{z} \Con{v}{x}^{\epsilon} =  \Con{v}{z}\Mul{x^{\epsilon}}{z}$ for $\epsilon = \pm 1$,
$x \in X$, and $v,z \in Y$ such that $v \neq z$;
\item[R3.\ ] $\Con{v}{z}^{\epsilon} \Mul{x^{\delta}}{v} \Con{v}{z}^{-\epsilon} = \Mul{x^{\delta}}{z}^{-\epsilon} \Mul{x^{\delta}}{v} \Mul{x^{\delta}}{z}^{\epsilon}$
for $\epsilon, \delta = \pm 1$, $x \in X$, and $v,z \in Y$ such that $v \neq z$;
\item[R4.\ ] $\Con{v}{z}^{\epsilon} \Con{w}{v} \Con{v}{z}^{-\epsilon} = \Con{w}{z}^{-\epsilon} \Con{w}{v} \Con{w}{z}^{\epsilon}$ for $\epsilon = \pm 1$, $w,v\in Y\cup Z$ and $z\in X\cup Y\cup Z$ such that $\Con{v}{z}$, $\Con{w}{v}$, $\Con{w}{z}\in S_K$; and
\item[R5.\ ] $\Con{y}{x}^{-\epsilon} \Mul{x^{-\epsilon}}{y} \Con{y}{x}^{\epsilon} = \Mul{x^{\epsilon}}{y}^{-1}$ for $\epsilon = \pm 1$, $x \in X$, and $y \in Y$.
\end{itemize}\\
\bottomrule
\end{tabular}
\caption{Relations for $\BKer{n}{k}{l}$}
\label{table:RKrelations}
\end{table}

\noindent
Of course, this appears to contradict Theorem \ref{maintheorem:notfinitepres}.  However,
the relations in R1-R5 depend on a choice of basis, and in
Theorem \ref{maintheorem:relations} we require
the infinite collection
of relations of the forms R1-R5 with respect to all choices of basis.
See Theorem \ref{theorem:relations} in \S \ref{section:generatorsandrelations} for a precise statement.
Also see Corollary~\ref{corollary:finiteLpresentation}, where we deduce that $\BKer{n}{k}{l}$ has a finite $L$-presentation (a strong kind of recursive presentation).

\paragraph{The abelianization of the kernel.}
Our next main theorem gives the abelianization of $\BKer{n}{k}{l}$.  There are three families of abelian
quotients (see \S \ref{section:abelianization} for more details)
\begin{itemize}
\item Let $\psi$ be the restriction of the natural homomorphism
$\Aut(F_{n,k,l}) \rightarrow \Aut(\Z^{n+k+l}) \cong \GL_{n+k+l}(\Z)$ to $\BKer{n}{k}{l}$.
Then it turns out that the image of $\psi$ is a free abelian subgroup of $\GL_{n+k+l}(\Z)$ of rank $nk$.
\item If $n=0$, then $\BKer{n}{k}{l}$ acts trivially on $F_{n,k,l}^{\Ab} = \Z^{n+k+l}$ and thus by
definition lies in the subgroup
\[\IA_{n,k,l} = \Set{$f \in \Aut(F_{n,k,l})$}{$f$ acts trivially on $F_{n,k,l}^{\Ab}$}\]
of $\Aut(F_{n,k,l})$.  The group $\IA_{n,k,l}$ is often called the {\em Torelli subgroup} of $\Aut(F_{n,k,l})$.
There is a well-known homomorphism $J \co \IA_n \rightarrow \Hom(\Z^{n+k+l}, \bigwedge^2 \Z^{n+k+l})$ known as the
{\em Johnson homomorphism}, and $J(\BKer{n}{k}{l})$ has rank $2kl + k(k-1)$.
\item If $n>0$, then $\BKer{n}{k}{l}$ does not act trivially on $F_{n,k,l}^{\Ab}$ so the
Johnson homomorphism is not available.  Nonetheless, we will construct a sequence of modified versions
of the Johnson homomorphism in this case the direct sum of whose images is a free abelian group of rank $2kl+kn$.
\end{itemize}
The above abelian quotients are all independent of each other, and we prove that they
give the entire abelianization of $\BKer{n}{k}{l}$.

\begin{maintheorem}
\label{maintheorem:abelianization}
In the case $n=0$, we have $\HH_1(\BKer{0}{k}{l};\Z) \cong \Z^{2kl+k(k-1)}$.  If $n > 0$, then
$\HH_1(\BKer{n}{k}{l};\Z) \cong \Z^{2kn+2kl}$.
\end{maintheorem}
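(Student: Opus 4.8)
The plan is to prove the isomorphism by establishing matching lower and upper bounds on $\HH_1(\BKer{n}{k}{l};\Z)$, where throughout I write $r = 2kl + k(k-1)$ when $n = 0$ and $r = 2kn + 2kl$ when $n > 0$. For the lower bound I would assemble the three families of abelian quotients described above into a single homomorphism $\Phi \co \BKer{n}{k}{l} \rightarrow \Z^{r}$ and show that $\Phi$ is surjective. The first factor is the restriction $\psi$ to $\GL_{n+k+l}(\Z)$, whose image has rank $nk$ and which detects the multiplication generators $\Mul{x^{\epsilon}}{y}$ (each acts on $F_{n,k,l}^{\Ab}$ as an elementary matrix adding $y$ to $x$, while every conjugation generator acts trivially). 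The second factor is the Johnson homomorphism $J$ when $n = 0$ and the direct sum of the generalized Johnson homomorphisms of \S\ref{section:generalizedjohnson} when $n > 0$. In either case it detects $\Con{z}{y}$ and $\Con{y}{z}$; when $n=0$ it also detects $\Con{y}{y'}$ for distinct $y,y'\in Y$, and when $n>0$ it instead detects $\Con{y}{x}$. I would verify surjectivity concretely by evaluating $\Phi$ on the generating set $S_K$ from Theorem \ref{maintheorem:generators} and checking that the images of the surviving generators form a basis of $\Z^{r}$.

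For the upper bound I would start from the presentation $\GroupPres{S_K}{R}$ of Theorem \ref{maintheorem:relations} and abelianize. A short computation shows that relations of types R1, R3, and R4 become trivial after abelianizing, since in each the conjugating factors drop out and the two sides then agree; thus only R2 and R5 contribute. Relation R5 gives $[\Mul{x^{-\epsilon}}{y}] = -[\Mul{x^{\epsilon}}{y}]$, which cuts the $2nk$ multiplication classes down to the $nk$ classes $[\Mul{x}{y}]$. When $n > 0$, relation R2 abelianizes to $[\Con{v}{z}] = 0$ for all distinct $v,z \in Y$, killing the $k(k-1)$ classes $\Con{y}{y'}$; when $n=0$ relation R2 is vacuous and these classes persist. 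Counting the remaining classes gives exactly $r$ in both cases, so $\HH_1(\BKer{n}{k}{l};\Z)$ is generated by at most $r$ elements. Since a finitely generated abelian group that is generated by $r$ elements and surjects onto $\Z^{r}$ must be isomorphic to $\Z^{r}$, combining the two bounds completes the argument. I note that only the fixed-basis relations R1--R5 are needed here: the presentation's additional relations with respect to other bases can only impose further identifications, and the lower bound already prevents the rank from dropping below $r$, so they cannot affect the outcome.

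The main obstacle is the lower bound, and in particular the construction of the generalized Johnson homomorphisms in the case $n > 0$. When $n > 0$ the kernel no longer lies in the Torelli subgroup $\IA_{n,k,l}$, so the classical Johnson homomorphism is unavailable, and I would instead rely on the modified homomorphisms developed earlier. The delicate points are to confirm that these maps are genuine homomorphisms on $\BKer{n}{k}{l}$, to compute their values on the generators $\Con{y}{x}$, $\Con{y}{z}$, and $\Con{z}{y}$, and to check that their combined image together with the image of $\psi$ attains the full rank $2kn + 2kl$ with no unexpected coincidences. Verifying that R1, R3, and R4 abelianize trivially while R2 forces $[\Con{y}{y'}] = 0$ precisely when $X \neq \emptyset$ is the clean computational step that explains the dichotomy between the $n = 0$ and $n > 0$ formulas.
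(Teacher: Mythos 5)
Your proposal is correct, and its core mechanism is the same as the paper's: for the lower bound you use exactly the paper's homomorphisms (your $\psi$ is the paper's action-on-homology map $A$, and the paper's $J$ for $n=0$ and $J_c$, $J'_c$ for $n>0$ are your ``generalized Johnson homomorphisms''), evaluated on the generating set of Theorem \ref{maintheorem:generators}, and your closing lemma (a finitely generated abelian group generated by $r$ elements that surjects onto $\Z^r$ is $\Z^r$) is the same counting argument the paper uses implicitly. The one genuine divergence is the upper bound. You route it through Theorem \ref{maintheorem:relations}, which obliges you to confront the fact that the actual relation set there is $\hat{R}_K$ (all basis-changed conjugates of R1--R5), not R1--R5 alone; your resolution --- extra relations can only impose further identifications, and the surjection onto $\Z^r$ already pins the rank --- is valid. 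The paper instead avoids any appeal to the presentation: since the abelianization is generated by the images of $S_K$ regardless of what a complete relation set looks like, it suffices to exhibit \emph{any} relations holding in $\BKer{n}{k}{l}$ that kill the redundant classes, and the paper writes down two explicit identities, $\Mul{x_i}{y_j}\Mul{x_i^{-1}}{y_j}=[\Mul{x_i}{y_j},\Con{y_j}{x_i}]$ and $\Con{y_i}{y_j}=[\Con{y_i}{x_1}^{-1},\Mul{x_1}{y_j}]$, whose abelianizations are precisely your R5 and R2 consequences. So the paper's proof of this theorem depends only on Theorem \ref{maintheorem:generators}, whereas yours additionally requires the full presentation machinery of \S\ref{section:generatorsandrelations}; what your version buys is that the needed relations are read off systematically from Table~\ref{table:RKrelations} rather than found ad hoc, at the cost of invoking a much heavier theorem (and, had you used $R_K$ as if it were a complete relation set without your corrective remark, the argument would have had a real gap, since $\GroupPres{S_K}{R_K}$ is \emph{not} isomorphic to $\BKer{n}{k}{l}$).
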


\paragraph{An alternate definition.}
Our last theorem concerns a possible alternate definition of the automorphism group
of a free group with boundary.  Define
\[\AutFB{n}{k}{l}' = \Set{$f \in \Aut(F_{n,k,l})$}{$f(v) = v$ for $v \in Y \cup Z$}.\]
The group $\AutFB{n}{k}{l}'$ was first studied by McCool \cite{McCoolPresentation}, who proved that
it was finitely presentable.  Just as for $\AutFB{n}{k}{l}$, there is a split surjection
$\rho' \co \AutFB{n}{k}{l}' \rightarrow \AutFB{n}{0}{l}$.  Define $\BKer{n}{k}{l}' = \Ker(\rho')$,
so we have a short exact sequence
$$1 \longrightarrow \BKer{n}{k}{l}' \longrightarrow \AutFB{n}{k}{l}' \stackrel{\rho'}{\longrightarrow} \AutFB{n}{0}{l}' \longrightarrow 1.$$
One might expect that $\BKer{n}{k}{l}'$ is similar to $\BKer{n}{k}{l}$; however, the following theorem
shows that they are quite different.

\begin{maintheorem}
\label{maintheorem:alternatekernel}
Fix $n,k \geq 1$ and $l \geq 0$ such that $n+l \geq 2$.  Then the group 
$\BKer{n}{k}{l}'$ is not finitely generated.  In fact, $\HH_1(\BKer{n}{k}{l}';\Q)$ has infinite rank.
\end{maintheorem}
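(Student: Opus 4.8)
The plan is to build an explicit homomorphism from $\BKer{n}{k}{l}'$ onto an abelian group of infinite rank. Since a finitely generated group has $\HH_1(-;\Q)$ of finite rank, producing such a map with infinite-rank image proves the second (and hence also the first) assertion at once. Let $N$ be the normal subgroup of $F_{n,k,l}$ generated by $Y$, so that $F_{n,k,l}/N \cong F_{n,0,l}$ and $N$ is free of infinite rank. Recall that $N^{\Ab}$ is naturally a module over $\Z[F_{n,0,l}]$, with $\bar{g} \in F_{n,0,l}$ acting by $\bar{g} \cdot [c] = [g c g^{-1}]$ for $c \in N$; concretely it is the free module on the classes $[y_1],\ldots,[y_k]$, so that $\{\bar{g}\cdot[y_j] \mid \bar{g}\in F_{n,0,l},\ 1\le j\le k\}$ is a $\Z$-basis. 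For $f \in \BKer{n}{k}{l}'$ we have $\rho'(f)=\mathrm{id}$, hence $x_i^{-1}f(x_i)\in N$ for each $x_i\in X$, and I would define
\[
\tau \co \BKer{n}{k}{l}' \longrightarrow \bigoplus_{i=1}^{n} N^{\Ab}, \qquad \tau(f) = \bigl([x_i^{-1}f(x_i)]\bigr)_{i=1}^{n}.
\]

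The crux is showing $\tau$ is a homomorphism, which rests on the claim that $\BKer{n}{k}{l}'$ acts trivially on $N^{\Ab}$. Since $f$ fixes $Y$, it preserves $N$ and induces $f_\ast \co N^{\Ab}\to N^{\Ab}$; as $N^{\Ab}$ is generated as an abelian group by the classes $[g y_j g^{-1}]$ with $g\in F_{n,k,l}$, it suffices to compute
\[
f_\ast\bigl([g y_j g^{-1}]\bigr) = [f(g)\,y_j\,f(g)^{-1}] = \overline{f(g)}\cdot[y_j] = \bar{g}\cdot[y_j] = [g y_j g^{-1}],
\]
where the middle step uses $\overline{f(g)} = \rho'(f)(\bar{g}) = \bar{g}$. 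Granting $f_\ast=\mathrm{id}$, the homomorphism property is a short computation: writing $c = x_i^{-1}g(x_i)\in N$, one gets $x_i^{-1}(fg)(x_i) = \bigl(x_i^{-1}f(x_i)\bigr)\,f(c)$, whose class is $\tau_i(f) + f_\ast[c] = \tau_i(f) + \tau_i(g)$, where $\tau_i$ denotes the $i$-th component.

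To finish I would show $\mathrm{image}(\tau)$ has infinite rank by realizing infinitely many basis vectors of the target. Here the hypothesis $n+l\ge 2$ enters: fix $x_1$ and let $G = \langle (X\setminus\{x_1\})\cup Z\rangle$, a free subgroup of $F_{n,0,l}$ of rank $n-1+l\ge 1$, hence infinite. For each word $u$ in $(X\setminus\{x_1\})\cup Z$, let $f_u\in\Aut(F_{n,k,l})$ fix every generator except $x_1$ and send $x_1\mapsto x_1\,(u y_1 u^{-1})$. Because $u$ does not involve $x_1$ and $y_1$ is fixed, $f_u(u y_1 u^{-1}) = u y_1 u^{-1}$, so $f_u$ is an automorphism (a Nielsen transformation) fixing $Y\cup Z$ pointwise; and since $u y_1 u^{-1}\in N$ we have $\rho'(f_u)=\mathrm{id}$, so $f_u\in\BKer{n}{k}{l}'$. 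By construction $\tau(f_u) = (\bar{u}\cdot[y_1],0,\ldots,0)$, and as $\bar{u}$ ranges over the infinite group $G$ these are distinct, hence $\Z$-linearly independent, basis elements of $N^{\Ab}$. Therefore $\mathrm{image}(\tau)$, and a fortiori $\HH_1(\BKer{n}{k}{l}';\Q)$, has infinite rank.

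I expect the main obstacle to be the homomorphism property of $\tau$, that is, the verification that $\BKer{n}{k}{l}'$ acts trivially on $N^{\Ab}$; this is precisely the point where fixing the boundary generators only up to conjugacy versus on the nose would matter, and it is what makes $\tau$ linear rather than merely a crossed homomorphism. By contrast, the realization step is routine once $n+l\ge 2$ guarantees that the free subgroup $G$ is infinite, so that enough independent classes can be produced.
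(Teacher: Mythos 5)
Your proof is correct, and while it rests on the same underlying mechanism as the paper's (a Johnson-type homomorphism into an abelianized version of the normal closure of $Y$, evaluated on explicit conjugated Nielsen moves), your implementation is a genuinely leaner, self-contained route. The paper obtains Theorem \ref{maintheorem:alternatekernel} as a byproduct of machinery built in \S \ref{section:lker} for Theorem \ref{maintheorem:notfinitepres}: it fixes $y \in Y$, passes to the quotient $N_Y/M$ where $M$ is generated by $N_{Y \setminus \{y\}}$, $[N_Y,N_Y]$ and $[N_Y,[V,V]]$, shows that $\LKernkl = \Ker(J'_y)$ acts trivially on $N_Y/M$ so that the Johnson crossed homomorphism restricts to an honest homomorphism $I_s \co \LKernkl \rightarrow N_Y/M$, observes $\BKer{n}{k}{l}' \subset \LKernkl$, and evaluates $I_s$ on $h_m = \Con{y}{t}^m \Mul{s}{y} \Con{y}{t}^{-m}$, getting the independent elements $y^{mT}$. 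Your $\tau$ is the same homomorphism in essence and your $f_u$ are the same test elements (the paper's $h_m$ is your $f_{t^m}$, up to left-versus-right multiplication), but you bypass the subgroup $\LKernkl$ and the quotient $M$ entirely: since elements of $\BKer{n}{k}{l}'$ fix $Y$ pointwise rather than up to conjugacy, they act trivially on all of $N^{\Ab}$, so the crossed-homomorphism identity linearizes with no further quotienting, and you may even keep all coordinates $x_1,\ldots,x_n$ at once. Your closing remark correctly identifies why the paper cannot argue this way for the larger group $\BKer{n}{k}{l}$: there the action on $N^{\Ab}$ is genuinely nontrivial, and the carefully chosen $M$ and the restriction to $\LKernkl$ are what Theorem \ref{maintheorem:notfinitepres} requires. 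What your route buys is independence from \S \ref{section:lker}; what the paper's buys is that this theorem comes essentially for free once that section exists. The one fact you quote without proof --- that $N^{\Ab}$ is a free $\Z[F_{n,0,l}]$-module on $[y_1],\ldots,[y_k]$ --- is the same standard ``covering spaces of graphs'' fact the paper invokes in Lemma \ref{lemma:yvbasis}, so there is no gap there.
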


The proof of this falls out of our proof of Theorem \ref{maintheorem:notfinitepres} above.  Recall
from the proof sketch that one of the key steps was the construction of a subgroup $\LKernkl < \BKer{n}{k}{l}$
and a ``Johnson homomorphism'' $I \co \LKernkl \rightarrow A$, where $A$ is an abelian group of infinite rank.
It turns out that $\BKer{n}{k}{l}' \subset \LKernkl$ and the image $I(\BKer{n}{k}{l}')$ has infinite rank.

\paragraph{Prior results.}
Some special cases of our theorems appear in the literature on the pure symmetric automorphism
group.  Define $K_m = \BKer{0}{1}{m-1}$.  In \cite{CollinsGilbert}, Collins and Gilbert proved that
$K_3$ is not finitely presentable.  In \cite{Pettet}, Pettet proved that $K_m$ is finitely
generated for all $m$ and that $K_m$ is not finitely presentable for $m \geq 3$.  She also calculated
the abelianization of $K_m$.  Our proof of the finite generation of $\BKer{n}{k}{l}$ is
a generalization of the proof in \cite{Pettet}; however, our proofs of our other results are quite
different from those in \cite{CollinsGilbert} and \cite{Pettet}.

\paragraph{Acknowledgments.}
We wish to thank Nathalie Wahl for helpful correspondence and Tom Church for sending us some corrections.  We
also wish to thank the referee for many useful suggestions.

\paragraph{Outline and conventions.}
In \S \ref{section:generalizedjohnson}, we give a general construction of a ``Johnson crossed
homomorphism''.
This construction will be used in both \S \ref{section:abelianization} and \S \ref{section:notfinitepres}.
In \S \ref{section:abelianization}, we calculate the abelianization of $\BKer{n}{k}{l}$, proving
Theorem \ref{maintheorem:abelianization}.  
In \S \ref{section:notfinitepres}, we
prove Theorem \ref{maintheorem:notfinitepres}, which asserts that $\BKer{n}{k}{l}$ is not
finitely presentable if $k\geq 1$ and $n+l\geq 2$.  
The proof of Theorem \ref{maintheorem:alternatekernel}
is also contained in \S \ref{section:notfinitepres}.
Finally, in \S \ref{section:generatorsandrelations}, we determine generators and relations for $\BKer{n}{k}{l}$,
proving Theorems \ref{maintheorem:generators} and \ref{maintheorem:relations}. 
We put \S \ref{section:generatorsandrelations} at the end because of its heavy use of combinatorial group theory, however, it does not depend on the other sections and can also be read first.

Automorphisms act on the left and compose right-to-left like functions.
If $G$ is a group and $g,h \in H$, then we define
$g^h=hgh^{-1}$ and $[g,h]=g h g^{-1}h^{-1}$.
We note again our conventions that $\Con{x}{y}(x)=x^y$ and $\Mul{x}{y}(x)=yx$.

\section{Johnson crossed homomorphisms}
\label{section:generalizedjohnson}

To study $\HH_1(\BKer{n}{k}{l};\Z)$ and $\HH_2(\BKer{n}{k}{l};\Q)$, we will need
several variants on the well-known Johnson homomorphisms.  These homomorphisms
were originally introduced in the context of the Torelli subgroup of the mapping
class group of a surface by Johnson \cite{JohnsonHomo,JohnsonSurvey} and have
since been generalized to a variety of contexts (see, e.g., \cite{BroaddusFarbPutman, 
ChurchFarb, CohenPakianathan, FarbIA, Kawazumi}).  In this section, we give a
general framework for studying them.

Recall first that if a group $\Gamma$ acts on an abelian group $M$, then a {\em crossed homomorphism} 
from $\Gamma$ to $M$ with respect to this action
is a function $\phi \co \Gamma \rightarrow M$ satisfying the cocycle identity
$$\phi(g_1 g_2) = \phi(g_1) + g_1(\phi(g_2)) \quad \quad (g_1, g_2 \in \Gamma).$$
Crossed homomorphisms are also sometimes referred to as \emph{twisted 1-cocycles} or as \emph{derivations}.
The set of all crossed homomorphisms from $\Gamma$ to $M$ forms an abelian group which
we will denote $\CHom(\Gamma,M)$.  If $\Gamma$ acts trivially on $M$, then $\CHom(\Gamma,M) = \Hom(\Gamma,M)$. 

Now let
$$1 \longrightarrow A \longrightarrow B \longrightarrow C \longrightarrow 1$$
be a short exact sequence of groups and let $G$ be a group which acts on $B$.  Assume
the following three conditions hold.
\begin{enumerate}
\item The group $A$ is abelian.
\item For $a \in A$ and $g \in G$, we have $g(a) \in A$.
\item The induced action of $G$ on $C = B/A$ is trivial.
\end{enumerate}
To simplify our notation, we will often use additive notation when discussing $A$, though we
will try never to mix additive and multiplicative notation.  

For $g \in G$, define a function $\mathcal{J}_g \co B \rightarrow A$ as follows.  Condition 3 implies that 
$g(b) \cdot b^{-1} \in A$ for $b \in B$.  We can thus define $\mathcal{J}_g(b) = g(b) \cdot b^{-1}$.  
The group $B$ acts on $A$ by conjugation.  We claim that $\mathcal{J}_g$ is a crossed homomorphism.
Indeed, for $b_1,b_2 \in B$ we have
\begin{align*}
\mathcal{J}_g(b_1 \cdot b_2) &= g(b_1 \cdot b_2) \cdot (b_1 \cdot b_2)^{-1} \\
&= (g(b_1) \cdot b_1^{-1}) \cdot b_1 \cdot (g(b_2) \cdot b_2^{-1}) \cdot b_1^{-1} \\
&= \mathcal{J}_g(b_1) + b_1 (\mathcal{J}_g(b_2)).
\end{align*}
We can thus define a function
$$\mathcal{J} \co G \longrightarrow \CHom(B,A)$$
by $\mathcal{J}(g) = \mathcal{J}_g$.  The group $G$ acts on $\CHom(B,A)$ via the action
of $G$ on $A$.  We claim that $\mathcal{J}$ is a crossed homomorphism.  Indeed, for
$g_1,g_2 \in G$ and $b \in B$ we have
\begin{align*}
\mathcal{J}(g_1 \cdot g_2)(b) &= g_1(g_2(b)) \cdot b^{-1}\\
&= g_1(g_2(b) \cdot b^{-1}) \cdot g_1(b) \cdot b^{-1}\\
&= g_1(\mathcal{J}(g_2)(b)) + \mathcal{J}(g_1).
\end{align*}
We will call the function $\mathcal{J}$ a {\em Johnson crossed homomorphism}.

\section{The abelianization of $\BKer{n}{k}{l}$}
\label{section:abelianization}

In this section, we calculate the abelianization of $\BKer{n}{k}{l}$.
The actual calculation is contained in \S \ref{section:calculateabel}.
This is proceeded by \S \ref{section:homologyaction}--\ref{section:johnson}, which
are devoted to constructing the necessary abelian quotients
of $\BKer{n}{k}{l}$.  The homomorphisms constructed in \S \ref{section:homologyaction}
come from the action of $\BKer{n}{k}{l}$ on $F_{n,k,l}^{\Ab}$, and the
homomorphisms in \S \ref{section:johnson} come from certain Johnson
crossed homomorphisms.

To simplify our notation, we define
\[V = F_{n,k,l} = \Span{x_1,\ldots,x_n,y_1,\ldots,y_k,z_1,\ldots,z_l} \quad \text{and} \quad \overline{V} = V^{\Ab}\]
and
\[\Vc = \Span{y_1,\ldots,y_k,z_1,\ldots,z_l} \quad \text{and} \quad \BVc = \Vc^{\Ab}\]
and
\[\Vcy = \Span{y_1,\ldots,y_k} \quad \text{and} \quad \BVcy = \Vcy^{\Ab}.\]
Also, for $v \in V$ we will denote by $\overline{v} \in \overline{V}$ the image in the abelianization.
The notation $\BVc$ is intended to indicate that this is the subgroup of $V$ generated
by basis elements whose conjugacy classes are fixed by $\AutFB{n}{k}{l}$.

\subsection{The action on homology}
\label{section:homologyaction}

The first source of abelian quotients of $\BKer{n}{k}{l}$ is the action of 
$\AutFB{n}{k}{l}$ on $\overline{V}$.  This action restricts to the identity action on $\BVc$.  
If $M$ and $N$ are free $\Z$-modules and $N$ is a direct summand of $M$, then
denote by $\Aut(M,N)$ the set of automorphisms of $M$ that restrict to the identity on $N$.
With respect to an appropriate choice of basis, elements of $\Aut(M,N)$ are represented by 
matrices with an identity block in the upper left
hand corner and a block of zeros in the lower left hand corner.  Let 
$\pi \co \overline{V} \rightarrow \overline{V}/\BVc$ be the projection.
We have a split short exact sequence
\[\begin{CD}
1 @>>> \Hom(\overline{V}/\BVc, \BVcy) @>>> \Aut(\overline{V},\BVc) @>>> \Aut(\overline{V}/\BVcy,\BVc / \BVcy) @>>> 1,
\end{CD}\]
where $h \in \Hom(\overline{V}/\BVc, \BVcy)$ corresponds to the automorphism of 
$\overline{V}$ that takes $\overline{v} \in \overline{V}$
to $\overline{v} + h(\pi(\overline{v}))$.  This fits into a commutative diagram of the form
\[\begin{CD}
1 @>>> \BKer{n}{k}{l}                                            @>>> \AutFB{n}{k}{l}                   @>>> \AutFB{n}{0}{l}       @>>> 1 \\
@.     @VVV                                                           @VVV                                   @VVV                               @.\\
1 @>>> \Hom(\overline{V}/\BVc, \BVcy) @>>> \Aut(\overline{V},\BVc) @>>> \Aut(\overline{V}/\BVcy,\BVc / \BVcy) @>>> 1.
\end{CD}\]
In summary, we have a homomorphism $A \co \BKer{n}{k}{l} \rightarrow \Hom(\overline{V}/\BVc, \BVcy)$.

\begin{remark}
It is easy to write down a formula for $A$.  Namely, if $f \in \BKer{n}{k}{l}$ and $w \in \overline{V}/\BVc$, then
$A(f)(w) = f_{\ast}(\overline{v}) - \overline{v} \in \BVcy$, where $\overline{v} \in \overline{V}$ is any lift of $w$ and
$f_{\ast} : \overline{V} \rightarrow \overline{V}$ is the induced action.
\end{remark}

\subsection{The Johnson homomorphisms on $\BKer{n}{k}{l}$}
\label{section:johnson}

The second source of abelian quotients of $\BKer{n}{k}{l}$ are homomorphisms constructed
from Johnson crossed homomorphisms in the sense of \S \ref{section:generalizedjohnson}.
Let $\Vc' < V$ be the subgroup generated by $\Vc$ and $[V,V]$.  We then
have a short exact sequence
\begin{equation}
\label{eqn:vcprime1}
1 \longrightarrow [V,V] \longrightarrow \Vc' \longrightarrow \BVc \longrightarrow 1.
\end{equation}
It is well-known that $[V,V]/[V,[V,V]] \cong \bigwedge^2 \overline{V}$ (see, e.g., \cite{JohnsonHomo}).
Taking the quotient of the groups in the short exact sequence \eqref{eqn:vcprime1} by $[V,[V,V]]$,
we thus get a short exact sequence
\begin{equation}
\label{eqn:vcprime2}
1 \longrightarrow \WedgeTwo \overline{V} \longrightarrow \Vc'/[V,[V,V]] \longrightarrow \BVc \longrightarrow 1.
\end{equation}
It is clear that the group $\BKer{n}{k}{l}$ acts on $\Vc'$ and thus on $\Vc'/[V,[V,V]]$.  Also,
the action of $\BKer{n}{k}{l}$ on $\BVc$ is trivial.  By the method of \S \ref{section:generalizedjohnson},
we thus get a Johnson crossed homomorphism 
$J \co \BKer{n}{k}{l} \rightarrow \CHom(\BVc, \bigwedge^2 \overline{V})$.
The action of $\BVc$ on $\bigwedge^2 \overline{V}$ is trivial, so 
$\CHom(\BVc, \bigwedge^2 \overline{V}) = \Hom(\BVc, \bigwedge^2 \overline{V})$.  Summing up, we have
constructed a Johnson crossed homomorphism
$$J \co \BKer{n}{k}{l} \longrightarrow \Hom(\BVc, \WedgeTwo \overline{V}).$$

If $n=0$, then $\overline{V} = \overline{V}_C$ and thus $\BKer{n}{k}{l}$ acts trivially on $\bigwedge^2 \overline{V}$.  This
implies that in this case $J$ is actually a homomorphism.  If $n \geq 1$, however, then it is only a crossed
homomorphism.  This can be fixed as follows.  Consider $c \in Y \cup Z$.  Let 
$J_c \co \BKer{n}{k}{l} \rightarrow \bigwedge^2 \overline{V}$ be the composition
of $J$ with the restriction map from $\Hom(\BVc, \bigwedge^2 \overline{V})$
to
$$\Hom(\Span{\overline{c}}, \WedgeTwo \overline{V}) \cong \Hom(\Z,\WedgeTwo \overline{V}) \cong \WedgeTwo \overline{V}.$$
For $f \in \BKer{n}{k}{l}$, there exists some $w \in V$ such that $f(c) = w c w^{-1}$.  By
definition, $J_c(f)$ is the image of $f(c) \cdot c^{-1} = [w,c]$ in $\bigwedge^2 \overline{V}$.  Letting
$\overline{c}$ and $\overline{w}$ be the images of $c$ and $w$ in $\overline{V}$, respectively,
we get that $J_c(f) = \overline{w} \wedge \overline{c}$.  In other words, the image
of $J_c$ lies in $\overline{V} \wedge \overline{c} \subset \bigwedge^2 \overline{V}$, which 
is isomorphic to $\overline{V} / \Span{\overline{c}}$ 
via the isomorphism that takes $x \in \overline{V} / \Span{\overline{c}}$
to $\tilde{x} \wedge \overline{c}$, where $\tilde{x} \in \overline{V}$ is any lift of $x$.

So we have described a crossed homomorphism 
\[J_c\co \BKer{n}{k}{l}\longrightarrow \overline{V}/\Span{\overline{c}}.\]
If we further suppose that $c\in Z$, then the fact that $f$ is in $\BKer{n}{k}{l}$ implies that $\overline{w}$ is in $\Span{\BVcy,\overline{c}}$
(otherwise $wcw^{-1}$ will not become $c$ when the elements of $Y$ are deleted from it).
So in the case that $c\in Z$, we have
\[J_c\co \BKer{n}{k}{l}\longrightarrow \Span{\BVcy,\overline{c}}/\Span{\overline{c}}.\]
Since $\Span{\BVcy,\overline{c}} \subset \BVc$, the group $\BKer{n}{k}{l}$ acts trivially on $\Span{\BVcy,\overline{c}}/\Span{\overline{c}}$.
This implies that $J_c$ is a homomorphism when $c\in Z$.  Tracing through the above identifications, we
may calculate $J_c(f)$ for $f \in \BKer{n}{k}{l}$ as follows.  Write $f(c) = w c w^{-1}$ for some
$w \in V$.  Then $\overline{w} \in \Span{\BVcy,\overline{c}}$ and $f(c)$ is the image of $\overline{w}$ in $\Span{\BVcy,\overline{c}}/\Span{\overline{c}}$.

On the other hand, if $c\in Y$, then we are forced to take a further quotient to get an honest homomorphism.
By definition $\BKer{n}{k}{l}$ acts trivially on $\overline{V} / \BVcy$.
Letting
$$J'_c \co \BKer{n}{k}{l} \rightarrow \overline{V} / \Span{\BVcy, \overline{c}}$$
be the composition of $J_c$ (with its target redefined via the above isomorphism to
be $\overline{V} / \Span{\overline{c}}$) with the projection 
$\overline{V} / \Span{\overline{c}} \rightarrow \overline{V} / \Span{\BVcy, \overline{c}}$,
it follows that $J'_c$ is a homomorphism.  Tracing through the above identifications,
we may calculate $J'_c(f)$ for $f \in \BKer{n}{k}{l}$ as follows.  Write
$f(c) = w c w^{-1}$ for some $w \in V$.  Then $J'_c(f)$ is the image of $w$ in
$\overline{V} / \Span{Y, \overline{c}}$.

\subsection{Calculating the abelianization}
\label{section:calculateabel}

We now have all the pieces necessary to prove the following two theorems.  Together they
imply Theorem \ref{maintheorem:abelianization}.

\begin{theorem}[{Abelianization of $\BKer{n}{k}{l}$, $n = 0$}]
Consider $k \geq 1$ and $l \geq 0$.  The image of the homomorphism
\[J \co \BKer{0}{k}{l} \longrightarrow \Hom(\overline{V}, \WedgeTwo \overline{V})\]
is the abelianization of $\BKer{0}{k}{l}$.  This image is isomorphic to $\Z^r$ with $r=2kl+k(k-1)$.
\end{theorem}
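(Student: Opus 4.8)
The plan is to exploit the fact that, when $n=0$, the generating set $S_K$ of Theorem~\ref{maintheorem:generators} has exactly $r = 2kl+k(k-1)$ elements and that $J$ carries them to a linearly independent subset of $\Hom(\overline{V},\WedgeTwo\overline{V})$. Since the target of $J$ is abelian, $J$ factors through $\HH_1(\BKer{0}{k}{l};\Z)$, which is generated by the images of these $r$ elements; matching the two counts will force both groups to be free of rank $r$ and $J$ to be injective on homology.

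First I would record the value of $J$ on a conjugation generator. When $n=0$ we have $\overline{V}=\BVc$ and $\Vc'=V$, so the sequence \eqref{eqn:vcprime2} defining $J$ becomes $1 \to \WedgeTwo\overline{V} \to V/[V,[V,V]] \to \overline{V} \to 1$ and, as noted in \S\ref{section:johnson}, $J$ is a genuine homomorphism. For a generator $\Con{v}{w}$ with $v \in Y \cup Z$, the automorphism fixes every basis element except $v$, and $\Con{v}{w}(v) \cdot v^{-1} = [w,v]$ maps to $\overline{w}\wedge\overline{v}$ under the identification $[V,V]/[V,[V,V]] \cong \WedgeTwo\overline{V}$. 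Hence $J(\Con{v}{w})$ is the homomorphism $\overline{V} \to \WedgeTwo\overline{V}$ sending $\overline{v} \mapsto \overline{w}\wedge\overline{v}$ and every other basis vector to $0$.

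Next I would sort the $r$ generators by their input coordinate and check independence. Writing $\Hom(\overline{V},\WedgeTwo\overline{V}) = \bigoplus_{c \in Y \cup Z}\Hom(\Span{\overline{c}},\WedgeTwo\overline{V})$, homomorphisms supported on distinct coordinates are independent, so it suffices to fix an input $\overline{v}$ and examine the wedge-vectors contributed there. For $v = z \in Z$ only the generators $\Con{z}{y}$ (with $y \in Y$) are supported at $\overline{z}$, producing the distinct basis vectors $\overline{y}\wedge\overline{z}$; for $v = y \in Y$ the generators supported at $\overline{y}$ are the $\Con{y}{u}$ with $u \in (Y \cup Z)\setminus\{y\}$, producing the distinct basis vectors $\overline{u}\wedge\overline{y}$. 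In each case the wedge-vectors are distinct members of the standard basis of $\WedgeTwo\overline{V}$, hence linearly independent, so the full family of $J$-images is linearly independent. The count is $kl$ generators of the first kind and $k(k-1)+kl$ of the second, totalling $2kl + k(k-1) = r$.

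Finally I would assemble these facts. Factor $J$ as $\BKer{0}{k}{l} \to \HH_1(\BKer{0}{k}{l};\Z) \xrightarrow{\overline{J}} \Hom(\overline{V},\WedgeTwo\overline{V})$, and recall that $\HH_1(\BKer{0}{k}{l};\Z)$ is generated by the images of the $r$ elements of $S_K$, giving a surjection $\Z^r \twoheadrightarrow \HH_1(\BKer{0}{k}{l};\Z)$. The composite $\Z^r \to \Hom(\overline{V},\WedgeTwo\overline{V})$ sends the standard basis to the linearly independent family just found, so it is injective; therefore the surjection $\Z^r \to \HH_1(\BKer{0}{k}{l};\Z)$ is an isomorphism and $\overline{J}$ is injective. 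As $\overline{J}$ is surjective onto $\mathrm{Im}(J)$ by definition, it is an isomorphism, and we conclude $\HH_1(\BKer{0}{k}{l};\Z) \cong \mathrm{Im}(J) \cong \Z^{r}$ with $r = 2kl+k(k-1)$. The only delicate point is the linear-independence step, but after sorting by input coordinate it reduces to the triviality that distinct elements of $Y \cup Z$ yield distinct basis wedges; the real content is Theorem~\ref{maintheorem:generators}, which supplies both the generators and the exact count $r$ that makes the rank estimate tight.
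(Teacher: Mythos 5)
Your proposal is correct and follows essentially the same route as the paper: invoke Theorem~\ref{maintheorem:generators} to get the $r = 2kl+k(k-1)$ conjugation generators, compute $J$ on each (finding it supported on a single basis coordinate with a distinct wedge as value), deduce linear independence, and conclude by matching the generator count against the rank. The only difference is cosmetic — you spell out the factorization through $\HH_1$ and the coordinate-sorting that the paper leaves implicit or "to the reader."
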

\begin{proof}
Define
\begin{align*}
&C = \Set{$\Con{y_i}{y_j}$}{$1 \leq i,j \leq k$, $i \neq j$} \quad \text{and} \quad C' = \Set{$\Con{z_i}{y_j}$}{$1 \leq i \leq l$, $1 \leq j \leq k$} \\
&\quad \quad \quad \quad \quad \text{and} \quad C'' = \Set{$\Con{y_i}{z_j}$}{$1 \leq i \leq k$, $1 \leq j \leq l$}.
\end{align*}
By Theorem \ref{maintheorem:generators} (proved in \S \ref{section:generatorsandrelations} below), the group $\BKer{0}{k}{l}$ is generated by $C \cup C' \cup C''$.  Since
$C \cup C' \cup C''$ has $k(k-1) + 2kl$ elements, it is enough to prove that the images under
$J$ of the elements of $C \cup C' \cup C''$ are linearly independent.  This follows from
the following three calculations.  Checking these calculations is easy and left to the reader.
\begin{itemize}
\item Consider $\Con{y_i}{y_j} \in C$.  Then $J(\Con{y_i}{y_j}) \in \Hom(\overline{V},\bigwedge^2 \overline{V})$
is the map with the following behavior for $s \in \{y_1,\ldots,y_k,z_1,\ldots,z_l\}$.
\[J(\Con{y_i}{y_j})(\overline{s}) =
\begin{cases}
\overline{y}_i \wedge \overline{y}_j & \text{if $s = \overline{y}_i$,}\\
0 & \text{otherwise.}\\
\end{cases}\]
\item Consider $\Con{z_i}{y_j} \in C'$.  Then $J(\Con{z_i}{y_j}) \in \Hom(\overline{V},\bigwedge^2 \overline{V})$
is the map with the following behavior for $s \in \{y_1,\ldots,y_k,z_1,\ldots,z_l\}$.
\[J(\Con{z_i}{y_j})(\overline{s}) =
\begin{cases}
\overline{z}_i \wedge \overline{y}_j & \text{if $s = \overline{z}_i$,}\\
0 & \text{otherwise.}\\
\end{cases}\]
\item Consider $\Con{y_i}{z_j} \in C$.  Then $J(\Con{y_i}{z_j}) \in \Hom(\overline{V},\bigwedge^2 \overline{V})$
is the map with the following behavior for $s \in \{y_1,\ldots,y_k,z_1,\ldots,z_l\}$.
\[J(\Con{y_i}{z_j})(\overline{s}) =
\begin{cases}
\overline{y}_i \wedge \overline{z}_j & \text{if $s = \overline{y}_i$,}\\
0 & \text{otherwise.}\\
\end{cases}\]
\end{itemize}
\end{proof}

\begin{theorem}[{Abelianization of $\BKer{n}{k}{l}$, $n \geq 1$}]
\label{theorem:ngeq1abelianization}
Consider $n \geq 1$, $k \geq 1$, and $l \geq 0$.  The image of the homomorphism
\[A \oplus (\bigoplus_{c \in Y} J_c')\oplus (\bigoplus_{c \in Z} J_c) \co \BKer{n}{k}{l} \longrightarrow \Hom(\overline{V}/\BVc, \BVcy) \oplus 
(\bigoplus_{c \in Y} \overline{V} / \Span{\BVcy, \overline{c}})\oplus 
(\bigoplus_{c \in Z} \Span{\BVcy, \overline{c}}) / \Span{\overline{c}})\]
is the abelianization of $\BKer{n}{k}{l}$.  This image is isomorphic to $\Z^r$ for $r = 2kn + kl$.
\end{theorem}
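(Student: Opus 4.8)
The plan is to determine $\HH_1(\BKer{n}{k}{l};\Z)$ by squeezing it between two bounds and invoking the elementary fact that an abelian group generated by $m$ elements which admits a surjection onto $\Z^m$ is itself isomorphic to $\Z^m$. For the upper bound I would start from the finite generating set $S_K$ of Theorem~\ref{maintheorem:generators} and use the relations of Table~\ref{table:RKrelations} to cut it down, in the abelianization, to an explicit list of classes. For the lower bound I would feed these classes through the map $\Phi = A \oplus (\bigoplus_{c \in Y} J_c') \oplus (\bigoplus_{c \in Z} J_c)$ and check that their images form a basis of the target, producing a surjection $\HH_1(\BKer{n}{k}{l};\Z) \twoheadrightarrow \Z^{r}$. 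Matching the two counts then forces an isomorphism and identifies $\Phi$ with the abelianization map. A pleasant feature of this strategy is that it needs only that the relations R1--R5 hold (which can be checked directly on generators), not that they generate all relations, since the surjection coming from $\Phi$ independently prevents the rank from dropping below the target count.

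To obtain the upper bound I would abelianize the relations of Table~\ref{table:RKrelations}. The commutator relations R1, together with R3 and R4, become trivial in the abelianization, so all of the content lies in R2 and R5. Relation R5 yields $\Mul{x^{-1}}{y} \equiv -\Mul{x^{+1}}{y}$, collapsing the $\Mul{x^\epsilon}{y}$ generators to the $nk$ classes with a single sign. Relation R2 -- which exists precisely because $n \geq 1$, as it involves a letter of $X$ -- gives $\Con{y_i}{y_j} \equiv 0$ for all $i \neq j$, annihilating the $k(k-1)$ generators that conjugate one element of $Y$ by another. This is exactly the phenomenon distinguishing the present case from the $n=0$ theorem, where those classes survive and account for the $k(k-1)$ summand. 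What remains are the four families $\{\Mul{x}{y}\}$, $\{\Con{y}{x}\}$, $\{\Con{y}{z}\}$ and $\{\Con{z}{y}\}$, of sizes $nk$, $kn$, $kl$ and $kl$, so $\HH_1(\BKer{n}{k}{l};\Z)$ is generated by $nk + kn + kl + kl$ elements.

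For the lower bound I would evaluate $\Phi$ on the four surviving families, in the spirit of the three displayed computations of the $n=0$ theorem. The map $A$ sends $\Mul{x}{y}$ to the homomorphism $\overline{V}/\BVc \to \BVcy$ carrying $\overline{x} \mapsto \overline{y}$ and the remaining $x$-classes to zero, and these run over a basis of $\Hom(\overline{V}/\BVc, \BVcy)$. For $c = y \in Y$ the homomorphism $J_c'$ sends $\Con{y}{x}$ and $\Con{y}{z}$ to $\overline{x}$ and $\overline{z}$ in $\overline{V}/\Span{\BVcy,\overline{c}} = \overline{V}/\BVcy$, which together run over a basis of that quotient; and for $c = z \in Z$ the homomorphism $J_c$ sends $\Con{z}{y}$ to $\overline{y}$ in $\Span{\BVcy,\overline{c}}/\Span{\overline{c}}$, running over a basis as $y$ varies. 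Since the three kinds of summand are visibly independent, $\Phi$ carries the four families to a basis of its target and is therefore surjective onto a free abelian group of rank $nk + kn + kl + kl$.

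Combining the bounds through the generation principle shows that $\HH_1(\BKer{n}{k}{l};\Z)$ is free abelian and that $\Phi$ realizes its abelianization; the rank is the total $nk + kn + kl + kl = 2kn + 2kl$ of the four surviving families, in agreement with Theorem~\ref{maintheorem:abelianization}. The step I expect to require the most care is the upper bound: one must verify that abelianizing R2 and R5 really does express every member of $S_K$ in terms of the four surviving families, so that the generator count is genuinely this total and not larger, and one must make legitimate use of the fact, established in \S\ref{section:johnson}, that for $n \geq 1$ the Johnson crossed homomorphism has been replaced by the honest homomorphisms $J_c'$ and $J_c$ on suitable quotients of $\overline{V}$. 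This last point is essential: it is what allows the independence computation of the previous paragraph to go through even though $\BKer{n}{k}{l}$ no longer acts trivially on $\WedgeTwo \overline{V}$ in this regime.
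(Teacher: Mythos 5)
Your proposal is correct and takes essentially the same route as the paper: the paper likewise establishes the upper bound by discarding $M' = \{\Mul{x_i^{-1}}{y_j}\}$ and $C = \{\Con{y_i}{y_j}\}$ via the relations $\Mul{x_i}{y_j}\Mul{x_i^{-1}}{y_j} = [\Mul{x_i}{y_j},\Con{y_j}{x_i}]$ and $\Con{y_i}{y_j} = [\Con{y_i}{x_1}^{-1},\Mul{x_1}{y_j}]$ (which are exactly rearrangements of your R5 and R2), and then verifies the lower bound by computing $A$, $J'_c$, and $J_c$ on the four surviving families and checking independence, just as you do. One remark: your count $2kn+2kl$ agrees with Theorem~\ref{maintheorem:abelianization} and with the paper's own proof; the ``$r = 2kn + kl$'' in the statement of Theorem~\ref{theorem:ngeq1abelianization} is evidently a typo.
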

\begin{proof}
Define
\[M = \Set{$\Mul{x_i}{y_j}$}{$1 \leq i \leq n$, $1 \leq j \leq k$} \quad \text{and} \quad M' = \Set{$\Mul{x_i^{-1}}{y_j}$}{$1 \leq i \leq n$, $1 \leq j \leq k$}\]
and
\begin{align*}
&C = \Set{$\Con{y_i}{y_j}$}{$1 \leq i,j \leq k$, $i \neq j$} \quad \text{and} \quad C' = \Set{$\Con{z_i}{y_j}$}{$1 \leq i \leq l$, $1 \leq j \leq k$} \\
&\quad \text{and} \quad C'' = \Set{$\Con{y_i}{z_j}$}{$1 \leq i \leq k$, $1 \leq j \leq l$} \quad \text{and} \quad C''' = \Set{$\Con{y_i}{x_j}$}{$1 \leq i \leq k$, $1 \leq j \leq n$}.
\end{align*}
By Theorem \ref{maintheorem:generators}, the group $\BKer{n}{k}{l}$ is generated by $M \cup M' \cup C \cup C' \cup C'' \cup C'''$.

We have relations
\[\Mul{x_i}{y_j} \Mul{x_i^{-1}}{y_j} = [\Mul{x_i}{y_j}, \Con{y_j}{x_i}] \quad \quad \text{for $1 \leq i \leq n$ and $1 \leq j \leq k$}\]
and
\[\Con{y_i}{y_j} = [\Con{y_i}{x_1}^{-1},\Mul{x_1}{y_j}] \quad \quad \text{for $1 \leq i,j \leq k$ such that $i \neq j$}.\]
When $\BKer{n}{k}{l}$ is abelianized, therefore, the generators in $M'$ and $C$ become redundant.  Since $M$ has $kn$ elements, $C'''$ has $kn$ elements, and
$C'$ and $C''$ have $kl$ elements, it follows that the rank of the abelianization of $\BKer{n}{k}{l}$ is at most $2kn+2kl$.  To prove the theorem, it is thus
enough to show that the images under our map of the elements of $M \cup C' \cup C'' \cup C'''$ are linearly independent.  This follows from the following
four calculations.  For $c \in Y \cup Z$, let 
$q_c \co \Span{\BVcy, \overline{c}} \rightarrow \Span{\BVcy, \overline{c}}/ \Span{\overline{c}}$ and
$q'_c \co \overline{V} \rightarrow \overline{V}/ \Span{\BVcy, \overline{c}}$ be the natural maps.
\begin{itemize}
\item Clearly $J_c'(M) = 0$ for $c \in Y$ and $J_c(M)=0$ for $c\in Z$.  
Also, the elements of $M$ project under $A$ to a basis for the abelian group $\Hom(\overline{V}/\BVc, \BVcy)$.
\item Consider $\Con{z_i}{y_j} \in C'$.  Then $A(\Con{z_i}{y_j}) = 0$,  $J_c'(\Con{z_i}{y_j})=0$
for $c \in Y$ and $J_c(\Con{z_i}{y_j})=0$ for $c\in Z \setminus \{z_i\}$.  
Also, $J_{z_i}(\Con{z_i}{y_j}) = q_{z_i}(y_j)\neq 0$.
\item Consider $\Con{y_i}{z_j} \in C''$.  Then $A(\Con{y_i}{z_j}) = 0$, $J_c(\Con{y_i}{z_j})=0$ for $c\in Z$ and $J_c'(\Con{y_i}{z_j})=0$
for $c \in Y \setminus \{y_i\}$.  Also, $J_{y_i}'(\Con{y_i}{z_j}) = q'_{y_i}(z_j)\neq 0$.
\item Consider $\Con{y_i}{x_j} \in C'''$.  Then $A(\Con{y_i}{x_j}) = 0$, $J_c(\Con{y_i}{x_j})=0$ for $c\in Z$ and $J_c'(\Con{y_i}{x_j})=0$
for $c \in Y \setminus \{y_i\}$.  Also, $J_{y_i}'(\Con{y_i}{x_j}) = q'_{y_i}(x_j)\neq 0$.
\end{itemize}
\end{proof}

\section{The groups $\BKer{n}{k}{l}$ are not finitely presentable}
\label{section:notfinitepres}

The goal of this section is to prove Theorem \ref{maintheorem:notfinitepres}.  Along the way (in
\S \ref{section:lker}),
we will also prove Theorem \ref{maintheorem:alternatekernel}.  Let us
recall the setup of Theorem \ref{maintheorem:notfinitepres}.  We assume $n,k,l \in \Z$ are chosen
such that $k \geq 1$ and $n+l \geq 2$.  This implies that there exists some $y \in Y$ and
$a,b \in X \cup Z$ such that $a \neq b$.  Consider $m \geq 1$.  The elements
$$\Con{y}{a}^m \Con{b}{y} \Con{y}{a}^{-m} \quad \quad \text{and} \quad \quad \Con{a}{y} \Con{b}{y}$$
of $\BKer{n}{k}{l}$ commute and thus determine an abelian cycle $\mu_m \in \HH_2(\BKer{n}{k}{l};\Q)$.
Theorem \ref{maintheorem:notfinitepres} asserts that $\HH_2(\BKer{n}{k}{l};\Q)$ has
infinite rank, which will follow from the fact that the $\mu_m$ are linearly independent.
The skeleton of the proof of this is in \S \ref{section:skeletonproof}, which concludes
with an outline of the remainder of this section.
Because we will not change our hypotheses on $n$, $k$ and $l$ in this section, we will abbreviate $\BKer{n}{k}{l}$ as $\BKernkl$.

\subsection{Skeleton of proof}
\label{section:skeletonproof}

Recall that there is a bilinear pairing (the \emph{cap product} or \emph{evaluation pairing})
$$\omega \co \HH^2(\BKernkl;\Q) \times \HH_2(\BKernkl;\Q) \longrightarrow \Q.$$
In \S \ref{section:lker} -- \ref{section:evaluatezeta}, we will prove the following
lemma.

\begin{lemma}
\label{lemma:zeta}
For $r \geq 1$, there exist elements $[\zeta_r] \in \HH^2(\BKernkl;\Q)$ such that
$$\omega([\zeta_r],\mu_m) =
\begin{cases}
2 & \text{if $m = r$,}\\
0 & \text{if $m \neq r$,}
\end{cases}$$
for all $m \geq 1$.
\end{lemma}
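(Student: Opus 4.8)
The plan is to construct the dual classes $[\zeta_r]$ by building explicit cohomology classes on a suitable subgroup $\LKernkl < \BKernkl$ and then averaging them back down to $\BKernkl$. First I would examine the two commuting elements generating the abelian cycle $\mu_m$, namely $\Con{y}{a}^m \Con{b}{y} \Con{y}{a}^{-m}$ and $\Con{a}{y}\Con{b}{y}$, and identify the subgroup $\LKernkl$ that contains all of these for every $m \geq 1$; from the introduction this is the subgroup carrying the infinite-rank ``Johnson homomorphism'' $I \co \LKernkl \rightarrow A$. The key is that $I$ composed with the surjections $\alpha_m' \co A \rightarrow \Z$ produces homomorphisms $\alpha_m \co \LKernkl \rightarrow \Z$, hence cohomology classes $[\alpha_m] \in \HH^1(\LKernkl;\Q)$. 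I would then set $\eta_m = [\alpha_m] \cup [\alpha_0] \in \HH^2(\LKernkl;\Q)$.

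The heart of the argument is a direct computation of the evaluation of the cup product $\eta_m$ against the abelian cycle $\mu_r$, viewed as living in $\HH_2(\LKernkl;\Q)$. Since $\mu_r$ is the image of the fundamental class of a $2$-torus under the map $\Z^2 \rightarrow \LKernkl$ sending the generators to the two commuting elements above, the evaluation $\omega(\eta_m, \mu_r)$ reduces to a $2\times 2$ determinant built from the values $\alpha_m$ and $\alpha_0$ take on the two generators. Concretely, if $u_r$ and $v$ denote the two commuting generators of $\mu_r$, then $\omega([\alpha_m]\cup[\alpha_0], \mu_r)$ equals $\alpha_m(u_r)\alpha_0(v) - \alpha_m(v)\alpha_0(u_r)$. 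I would compute $\alpha_m$ and $\alpha_0$ on $u_r$ and $v$ explicitly using the formula for $I$ and the definition of the surjections $\alpha_m'$; the whole point of choosing these particular generators and these particular surjections is to arrange that this determinant equals a nonzero multiple of $\delta_{m,r}$ (up to the final normalization by $2$).

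From $\eta_m$ I would produce $[\zeta_m] \in \HH^2(\BKernkl;\Q)$ by the averaging/transfer-type process described in the introduction. Since $\LKernkl$ has infinite index in $\BKernkl$, the classical transfer is unavailable, so I expect the main obstacle to be making this averaging rigorous: I must show the averaged class is well-defined (the sum or limit involved converges or stabilizes), that it lands in $\HH^2(\BKernkl;\Q)$, and that its pairing with $\mu_r$ agrees with the pairing of $\eta_m$ with $\mu_r$ up to the correct constant. The cleanest route is to realize $\mu_r$ as supported on $\LKernkl$ (so that its image in $\HH_2(\BKernkl;\Q)$ factors through the inclusion) and to arrange the averaging so that only finitely many cosets contribute nontrivially to the pairing with each fixed $\mu_r$; the factor of $2$ in the statement should emerge from this finite contribution. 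Once the pairing $\omega([\zeta_r],\mu_m) = 2\delta_{m,r}$ is verified, the lemma follows, and the linear independence of the $\mu_m$ (hence Theorem \ref{maintheorem:notfinitepres}) is an immediate consequence.
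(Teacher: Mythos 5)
Your proposal is correct and follows essentially the same route as the paper: build the homomorphisms $\alpha_m$ from the Johnson homomorphism on $\LKernkl$, form the cup products $[\alpha_m]\cup[\alpha_0]$, evaluate them on the abelian cycles via the $2\times 2$ determinant formula, and promote them to classes on $\BKernkl$ by a coset-averaging construction whose convergence rests on the fact that only finitely many cosets pair nontrivially with any fixed cycle. The paper executes exactly this plan, with the factor of $2$ arising (as you anticipated) because precisely two translates of the cocycle contribute $1$ each when $r=m$.
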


\begin{remark}
We denote our cohomology class $[\zeta_r]$ because it will be defined by an explicit
cocycle $\zeta_r$ on $\BKernkl$.
\end{remark}

Lemma \ref{lemma:zeta} immediately implies Theorem \ref{maintheorem:notfinitepres}.  Indeed, if
$$\sum_{i=1}^{\infty} c_i \mu_i = 0 \quad \quad \text{($c_m \in \Q$, only finitely many $c_m$ nonzero)},$$
then we obtain that
$$0 = \omega(\sum_{i=1}^{\infty} [\zeta_r], c_i \mu_i) = \sum_{i=1}^{\infty} c_i \omega([\zeta_r], \mu_i) = 2 c_r$$
for all $r \geq 1$, as desired.

The outline of the proof of Lemma \ref{lemma:zeta} is as follows.  In \S \ref{section:lker}, we will construct
a certain subgroup $\LKernkl < \BKernkl$ and a family of homomorphisms from $\LKernkl$
to an infinite rank free abelian group $A$.  These homomorphisms are constructed from a Johnson
crossed homomorphism.  This section also contains a proof of Theorem \ref{maintheorem:alternatekernel}.
In \S \ref{section:zeta}, we will compose one of these homomorphisms
with a family of maps from $A$ to $\Z$ to obtain a sequence of surjective homomorphisms
$$\alpha_i \co \LKernkl \longrightarrow \Z \quad \quad \text{($i \geq 0$)}.$$
The maps $\alpha_i$ determine elements $[\alpha_i] \in \HH^1(\LKernkl;\Q)$, so we can use the cup product
to obtain cohomology classes $[\alpha_r] \cup [\alpha_0] \in \HH^2(\LKernkl;\Q)$ for $r \geq 1$.
We will use a sort of averaging process to construct
an element $[\zeta_r] \in \HH^2(\BKernkl;\Q)$ out of 
$[\alpha_r] \cup [\alpha_0] \in \HH^2(\LKernkl;\Q)$.
The calculation in Lemma \ref{lemma:zeta} is then contained in \S \ref{section:evaluatezeta}.  
Our calculations in \S \ref{section:zeta}--\ref{section:evaluatezeta} 
will (alas) have to be performed at the chain level, so between \S \ref{section:lker}
and \S \ref{section:zeta} we have \S \ref{section:groupchains}, which reviews some 
chain-level information about group cohomology.

\subsection{The group $\LKernkl$ and its Johnson homomorphism}
\label{section:lker}
The goal of this section is to define a subgroup $\LKernkl$ of $\BKernkl$ with infinitely many homomorphisms $\LKernkl\to\Z$; we use these to construct our desired cocycles on $\BKernkl$.
In fact, we define a crossed homomorphism from $\BKernkl$ to an infinite rank abelian group, and by restriction get homomorphisms on a subgroup.
In order to make these definitions, we must first investigate some subgroups of $V$ that arise naturally given the special role of $Y$.

Recall that we have fixed some $y \in Y$.  Also, recall that 
$$V = F_{n,k,l} \quad \quad \text{and} \quad \quad \overline{V} = V^{\Ab} \quad \quad \text{and} \quad \quad V_Y = \Span{Y} \subset V \quad \quad \text{and} \quad \quad \overline{V}_Y = V_Y^{\Ab}.$$
Define $N_Y$ to be the normal closure of $Y$ in $V$.  We have a short exact sequence
\begin{equation}
\label{eqn:nyvseq}
1 \longrightarrow N_Y \longrightarrow V \longrightarrow V/N_Y \longrightarrow 1.
\end{equation}
The group $\BKernkl$ acts on $V$.  This action preserves $N_Y$, and the induced action
on $V/N_Y$ is trivial by the definition of $\BKernkl$.  To construct a Johnson crossed homomorphism out of this
situation, we will need to quotient the groups in \eqref{eqn:nyvseq} by a subgroup $M \lhd V$ which
is preserved by $\BKernkl$ such that $M < N_Y$ and such that $N_Y /M$ is abelian.  Of
course, we could use $[N_Y,N_Y]$, but it turns out that a larger $M$ gives a Johnson crossed
homomorphism with better properties.

Let $Y' = Y \setminus \{y\}$ and $N_{Y'} < V$ be the normal closure of $Y'$.  Observe that
each of the subgroups
\begin{equation}
\label{eqn:mdefn}
N_{Y'} \quad \quad \text{and} \quad \quad [N_Y,N_Y] \quad \quad \text{and} \quad \quad [N_Y,[V,V]]
\end{equation}
is normal and preserved by $\BKernkl$.  Let $M < V$ be the subgroup generated by the subgroups in
\eqref{eqn:mdefn}.  It is clear that $M$ is a normal subgroup.  We have a short exact sequence
$$1 \longrightarrow N_Y/M \longrightarrow V/M \longrightarrow V/N_Y \longrightarrow 1.$$
Associated to this is a Johnson crossed homomorphism 
$$I \co \BKernkl \longrightarrow \CHom(V/M,N_Y/M).$$

Our next order of business is to find a subgroup $\LKernkl < \BKernkl$ such that
$\LKernkl$ acts trivially on $N_Y/M$.  The restriction of $I$ to $\LKernkl$ will then
be an actual homomorphism.  We need the following lemma about $N_Y / M$.

\begin{lemma}
\label{lemma:yvwelldefined}
If $\tilde{v} \in V$ and $\tilde{v}' \in V$ have the same image in $\overline{V} / \overline{V}_Y$,
then the images of $y^{\tilde{v}}$ and $y^{\tilde{v}'}$ in $N_Y/M$ are equal.
\end{lemma}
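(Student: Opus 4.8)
The plan is to recognize that the map $\tilde v \mapsto y^{\tilde v} \bmod M$ is nothing but the conjugation action of $V$ on the single class of $y$ in $N_Y/M$, and then to show that this action factors through $\overline V/\overline V_Y$. First I would record the structural consequences of the three pieces of $M$ listed in \eqref{eqn:mdefn}. Since $[N_Y,N_Y] < M$, the group $N_Y/M$ is abelian, so I will write it additively, and since $M \lhd V$ the group $V$ acts on $N_Y/M$ by conjugation. Write $[n] \in N_Y/M$ for the class of $n \in N_Y$, and for $v \in V$ (or its image in $\overline V$) write $v \cdot [n]$ for the conjugation action. With this notation $y^{\tilde v} = \tilde v y \tilde v^{-1}$ represents $\tilde v \cdot [y]$, so the lemma is exactly the assertion that $\tilde v \cdot [y]$ depends only on the image of $\tilde v$ in $\overline V/\overline V_Y$.

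The heart of the argument consists of two reductions of this action. The first reduction is that the conjugation action of $V$ on $N_Y/M$ factors through $\overline V = V^{\mathrm{ab}}$. Because $[N_Y,[V,V]] < M$, for every $n \in N_Y$ and $c \in [V,V]$ we have $[n,c] \in M$, and since $[c,n] = [n,c]^{-1}$ this gives $[c,n] \in M$ as well; hence conjugation by each commutator fixes every class of $N_Y/M$, and as $[V,V]$ is generated by commutators acting by automorphisms, all of $[V,V]$ acts trivially. The second reduction is that $\overline V_Y$ fixes the specific class $[y]$: for each $y_i \in Y$ we have $[y_i,y] \in [N_Y,N_Y] < M$, so $y_i \cdot [y] = [y]$, and because the action has already been seen to factor through the abelian group $\overline V$ and $\overline V_Y$ is the image of $V_Y = \Span{Y}$, every element of $\overline V_Y$ therefore fixes $[y]$.

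Finally I would assemble these. If $\tilde v$ and $\tilde v'$ have the same image in $\overline V/\overline V_Y$, then in $\overline V$ we may write $\bar{\tilde v} - \bar{\tilde v}' = w$ with $w \in \overline V_Y$. Using that the action is through the abelian group $\overline V$ together with the second reduction, $\tilde v \cdot [y] = (\bar{\tilde v}' + w)\cdot[y] = \bar{\tilde v}' \cdot (w \cdot [y]) = \tilde v' \cdot [y]$, which is precisely the equality of the images of $y^{\tilde v}$ and $y^{\tilde v'}$ in $N_Y/M$.

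I expect the only delicate point to be the bookkeeping in the first reduction, namely verifying that $M$ forces conjugation by \emph{all} of $[V,V]$ to act trivially on $N_Y/M$, and not merely on the distinguished generators; but this becomes immediate once one notes $[c,n]=[n,c]^{-1}$ and that $N_Y/M$ is abelian. It is worth observing that the factor $N_{Y'}$ of $M$ plays no role in this lemma — it is what makes $N_Y/M$ small enough to be useful later — so I would not invoke it here, relying only on $[N_Y,N_Y] < M$ and $[N_Y,[V,V]] < M$.
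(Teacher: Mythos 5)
Your proposal is correct and rests on exactly the same ingredients as the paper's proof --- the containments $[N_Y,N_Y] < M$ and $[N_Y,[V,V]] < M$ together with the normality of $M$ --- so it is essentially the same argument. The paper simply runs the computation element-wise, writing $\tilde{v} = \tilde{v}' w_1 w_2$ with $w_1 \in [V,V]$ and $w_2 \in N_Y$ and conjugating in stages, whereas you package the identical computation as the statement that the conjugation action of $V$ on $N_Y/M$ descends to $\overline{V}$ with $\overline{V}_Y$ stabilizing the class of $y$.
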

\begin{proof}
By assumption, we can find $w_1 \in [V,V]$ and $w_2 \in N_Y$ such that 
$\tilde{v} = \tilde{v}' w_1 w_2$.  It then follows that
\begin{equation}
\label{eqn:yveqn}
y^{\tilde{v}} = y^{\tilde{v}' w_1 w_2} = ((y^{w_2})^{w_1})^{\tilde{v}'}.
\end{equation}
Modulo $M$, we have $y^{w_2} = y$ and $y^{w_1} = y$.  It follows that modulo $M$, the
expression in \eqref{eqn:yveqn} equals $y^{\tilde{v}'}$, as desired.
\end{proof}

For $v \in \overline{V} / \overline{V}_Y$, pick some $\tilde{v} \in V$ that projects to $v$ and
define $y^v$ to equal the image of $y^{\tilde{v}}$ in $N_Y/M$.  
Lemma \ref{lemma:yvwelldefined} implies that $y^v$ is independent of the choice of 
$\tilde{v}$.  We then have the following.

\begin{lemma}
\label{lemma:yvbasis}
The infinite set $\{\text{$y^v$ $|$ $v \in \overline{V} / \overline{V}_Y$}\}$ is a basis
for the abelian group $N_Y/M$.
\end{lemma}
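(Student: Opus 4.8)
The plan is to compute $N_Y/M$ outright, realize it as a group ring, and observe that the elements $y^v$ are precisely its standard group-element basis. The first step is a reduction to a rank-one situation. Since $N_{Y'} \subseteq M \subseteq N_Y$ and all three groups are normal in $V$, I pass to the quotient $W := V/N_{Y'}$, which is the free group on $X \cup \{y\} \cup Z$ (so $W \cong F_{n,1,l}$). By the third isomorphism theorem $N_Y/M \cong N/M_W$, where $N$ is the normal closure of $y$ in $W$ (the image of $N_Y$) and $M_W := \langle [N,N],\ [N,[W,W]] \rangle$ is the image of $M$; here the generator $N_{Y'}$ of $M$ dies and the other two generators map to $[N,N]$ and $[N,[W,W]]$. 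Set $Q := W/N \cong V/N_Y$, the free group on $X \cup Z$; then $Q^{\Ab} = \overline{V}/\BVcy$.

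Next I would identify $N^{\Ab} = N/[N,N]$ as a $\Z[Q]$-module. Using $Q$ itself as a Schreier transversal for $N$ in $W$, the Reidemeister--Schreier method shows that $N$ is free on $\{\, t y t^{-1} \mid t \in Q \,\}$, so $N^{\Ab}$ is free abelian on the classes $[t y t^{-1}]$. The conjugation action of $W/N = Q$ sends the basis element indexed by $t$ to the one indexed by $gt$, so this is the regular representation: $N^{\Ab} \cong \Z[Q]$ as a left $\Z[Q]$-module, free of rank one on $[y]$.

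The crux is to compute the image of $[N,[W,W]]$ in $N^{\Ab} \cong \Z[Q]$ and recognize the quotient. For $\nu \in N$ and $c \in [W,W]$, write $\bar c \in [Q,Q]$ for the image of $c$; since $N$ is normal one checks directly that in $N^{\Ab}$ one has $[\nu,c] = [\nu] + \bar c\cdot[\nu^{-1}] = (1-\bar c)\cdot[\nu]$. As $[N,[W,W]]$ is normal in $W$, its image is a $\Z[Q]$-submodule, and as $\nu$ and $c$ vary (using that $[W,W]\to[Q,Q]$ is onto and that $[\nu]$ ranges over all of $\Z[Q]$) it is exactly the ideal generated by $\{\, 1-\bar c \mid \bar c \in [Q,Q]\,\}$. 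This is precisely the kernel of the ring homomorphism $\Z[Q] \to \Z[Q/[Q,Q]] = \Z[Q^{\Ab}]$ induced by $Q \to Q^{\Ab}$. Hence $N/M_W \cong \Z[Q^{\Ab}]$, which is free abelian on the group elements of $Q^{\Ab}$. Tracing the identifications, the class of $t y t^{-1}$ maps to the image of $t$ in $Q^{\Ab} = \overline{V}/\BVcy$, i.e.\ to $y^v$ for $v = \bar t$; thus $\{\, y^v \mid v \in \overline{V}/\BVcy \,\}$ is the standard basis of $\Z[Q^{\Ab}]$, proving the lemma.

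The main obstacle is this last step: correctly verifying the identity $[\nu,c] \equiv (1-\bar c)[\nu]$ in $N^{\Ab}$ and pinning down the submodule it generates as \emph{exactly} the augmentation kernel for $[Q,Q]$ -- neither larger nor smaller. The surjectivity of $[W,W]\to[Q,Q]$ and the standard description $\ker(\Z[Q]\to\Z[Q/H]) = \langle h-1 : h \in H\rangle$ for $H \lhd Q$ are what make the final recognition clean; everything before it is bookkeeping with normal subgroups of the free group $V$.
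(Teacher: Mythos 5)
Your proposal is correct, and while it rests on the same key ingredient as the paper's proof, it is organized in a genuinely different way. Both arguments begin from the Reidemeister--Schreier / covering-space fact that the normal closure of $y$ is free on the conjugates $t y t^{-1}$ indexed by a transversal $\Span{X,Z}$. From there the paper proceeds by duality: it defines the surjection $\rho \co N_Y \rightarrow A$ onto the formal free abelian group $A$ on the symbols $y^v$ directly on this free basis, observes that $M \leq \Ker(\rho)$, and obtains the reverse inclusion $\Ker(\rho) \leq M$ by invoking Lemma~\ref{lemma:yvwelldefined}, which supplies the inverse map $A \rightarrow N_Y/M$. You instead compute the quotient intrinsically: first killing $N_{Y'}$ (reducing to the rank-one case $W \cong F_{n,1,l}$ via the third isomorphism theorem), then identifying $N^{\Ab}$ with the regular module $\Z[Q]$, and recognizing the image of $[N,[W,W]]$ as exactly the kernel of $\Z[Q] \rightarrow \Z[Q^{\Ab}]$, so that $N_Y/M \cong \Z[Q^{\Ab}]$ with the $y^v$ as its group-element basis. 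Your route is self-contained: it never uses Lemma~\ref{lemma:yvwelldefined}, and in fact reproves it, since the well-definedness of $y^v$ falls out of the identification. It also exhibits the extra structure on $N_Y/M$ --- it is the group ring $\Z[\overline{V}/\BVcy]$ with $\overline{V}/\BVcy$ acting by translation --- which is precisely what the paper uses implicitly afterwards (e.g.\ in the formula $f(y^v) = y^{v + J'_y(f)}$ and in defining the maps $\alpha_r'$). The paper's route buys brevity, given that Lemma~\ref{lemma:yvwelldefined} is already in hand. A small additional benefit of your reduction modulo $N_{Y'}$: it handles $k \geq 2$ cleanly, whereas the free basis of $N_Y$ itself must include conjugates of every element of $Y$, not just of $y$ (the paper's stated basis lists only the conjugates of $y$, with the conjugates of $Y'$ implicitly sent to $0$ by $\rho$), so your normalization makes the bookkeeping exact rather than tacit.
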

\begin{proof}
Let $A$ be the free abelian group with basis the formal symbols
$\{\text{$y^v$ $|$ $v \in \overline{V} / \overline{V}_Y$}\}$.
Using standard ``covering spaces of graphs'' arguments, it is easy to see that $N_Y$ is a free
group with free basis $S=\{\text{$y^{\tilde{v}}$ $|$ $\tilde{v} \in \Span{X,Z}$}\}$.  Let
$\rho \co N_Y \rightarrow A$ be the projection taking $y^{\tilde{v}} \in S$ to $y^v \in A$, where
$v \in \overline{V} / \overline{V}_Y$ is the image of $S$.  It is clear that $M < \Ker(\rho)$,
and Lemma \ref{lemma:yvwelldefined} implies that $\Ker(\rho) < M$.  The lemma follows.
\end{proof}

Since $\BKernkl$ preserves $M$, it acts on $N_Y/M$.  Recall that in \S \ref{section:johnson},
we constructed a homomorphism $J'_y \co \BKernkl \rightarrow \overline{V} / \overline{V}_Y$.
Tracing through the definitions, we see that
$$f(y^v) = y^{v + J'_y(f)} \quad \quad (\text{$v \in \overline{V}/\overline{V}_Y$, $f \in \BKernkl$})$$
We define $\LKernkl = \Ker(J'_y)$.
This lets us obtain the following result.

\begin{corollary}
\label{corollary:lkertrivial}
The group $\LKernkl < \BKernkl$ acts trivially on $N_Y/M$, and hence the restriction
of the Johnson crossed homomorphism $I \co \BKernkl \longrightarrow \CHom(V/M,N_Y/M)$
to $\LKernkl$ is a homomorphism.
\end{corollary}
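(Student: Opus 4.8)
The plan is to reduce the statement entirely to the explicit action formula $f(y^v) = y^{v + J'_y(f)}$ recorded just before the definition of $\LKernkl$, combined with Lemma \ref{lemma:yvbasis}, which identifies $\{y^v \mid v \in \overline{V}/\overline{V}_Y\}$ as a basis of the abelian group $N_Y/M$.

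First I would verify triviality of the action. By definition $\LKernkl = \Ker(J'_y)$, so every $f \in \LKernkl$ has $J'_y(f) = 0$; the action formula then gives $f(y^v) = y^{v}$ for all $v \in \overline{V}/\overline{V}_Y$. Hence $f$ fixes each member of the basis supplied by Lemma \ref{lemma:yvbasis}. Since $f$ acts on $N_Y/M$ by a $\Z$-linear automorphism, fixing a basis forces it to act as the identity on all of $N_Y/M$.

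Next I would deduce that the restriction $I|_{\LKernkl}$ is a genuine homomorphism. Recall from \S \ref{section:generalizedjohnson} that $\BKernkl$ acts on $\CHom(V/M, N_Y/M)$ through its action on the coefficient module $N_Y/M$, and that $I$ obeys the cocycle identity
$$I(f_1 f_2) = I(f_1) + f_1(I(f_2)) \qquad (f_1, f_2 \in \BKernkl).$$
The first step shows that $\LKernkl$ acts trivially on $N_Y/M$, and therefore trivially on $\CHom(V/M, N_Y/M)$; so for $f_1, f_2 \in \LKernkl$ the twisting term $f_1(I(f_2))$ equals $I(f_2)$, and the cocycle identity collapses to the additivity relation $I(f_1 f_2) = I(f_1) + I(f_2)$. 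This is precisely the assertion that the restriction is a homomorphism.

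There is no serious obstacle here: the corollary is a formal consequence of the action formula together with the general properties of Johnson crossed homomorphisms established in \S \ref{section:generalizedjohnson}. The only point needing a word of care is the passage from triviality on a basis to triviality on the whole module, which is immediate from $\Z$-linearity; all of the genuine content already lives in the action formula, whose justification rests on Lemmas \ref{lemma:yvwelldefined} and \ref{lemma:yvbasis}.
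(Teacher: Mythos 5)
Your proof is correct and follows exactly the paper's (largely implicit) argument: the formula $f(y^v) = y^{v + J'_y(f)}$ together with Lemma \ref{lemma:yvbasis} shows that elements of $\LKernkl = \Ker(J'_y)$ fix a basis of $N_Y/M$ and hence act trivially, after which the cocycle identity from \S \ref{section:generalizedjohnson} collapses to additivity. No discrepancies to report.
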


The target $\CHom(V/M,N_Y/M)$ of $I$ is rather complicated, so our next order of business
is to break it into understandable pieces.  The key will be the following lemma, whose
proof is trivial and thus omitted.

\begin{lemma}
Let $G$ be an infinite cyclic group with generator $t$.  Also, let $A$ be an abelian
group upon which $G$ acts.  The map $i \co \CHom(G,A) \rightarrow A$ defined by
$i(\phi) = \phi(t)$ is then an isomorphism.
\end{lemma}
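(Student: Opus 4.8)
The plan is to verify directly that $i$ is an injective and surjective homomorphism of abelian groups, exploiting the fact that $G$ is free of rank one on the generator $t$. First I would observe that $i$ respects the additive structure: since addition on $\CHom(G,A)$ is defined pointwise, we have $(\phi+\psi)(t)=\phi(t)+\psi(t)$, so $i(\phi+\psi)=i(\phi)+i(\psi)$ and $i$ is a homomorphism.

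For injectivity, I would use the cocycle identity to show that a crossed homomorphism $\phi$ is completely determined by its single value $\phi(t)$. Applying the identity to $\phi(1\cdot 1)$ gives $\phi(1)=0$, and the relation $\phi(t^n)=\phi(t)+t(\phi(t^{n-1}))$ yields, by induction on $n\geq 0$, the formula $\phi(t^n)=\big(\sum_{j=0}^{n-1}t^j\big)\phi(t)$. The identity $0=\phi(1)=\phi(t^{-1})+t^{-1}(\phi(t))$ gives $\phi(t^{-1})=-t^{-1}(\phi(t))$, and a parallel induction handles the remaining negative powers. In particular, if $\phi(t)=0$ then $\phi\equiv 0$, so $\Ker(i)$ is trivial.

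For surjectivity, given $a\in A$ I would define $\phi\co G\rightarrow A$ by the formulas just derived, setting $\phi(t)=a$, and then check that this prescription satisfies the cocycle identity on all products $t^m t^n$, so that $i(\phi)=a$. The cleanest way to package both injectivity and surjectivity at once is to invoke the universal property of derivations on a free group: crossed homomorphisms out of a free group $F$ into $A$ are in natural bijection with arbitrary assignments of values in $A$ to a free basis of $F$; specializing to $F\cong\Z$ with basis $\{t\}$ gives exactly the isomorphism claimed. The only point requiring any real care is the bookkeeping in the surjectivity verification, namely confirming the cocycle identity on products of powers of both signs; this is a routine computation, which is precisely why the lemma is elementary.
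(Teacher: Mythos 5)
Your proof is correct. The paper actually omits the proof of this lemma entirely, declaring it ``trivial,'' and your argument --- that the cocycle identity forces $\phi(1)=0$ and determines $\phi(t^{n})$ for all $n\in\Z$ from the single value $\phi(t)$ (giving injectivity), while the same formulas, or equivalently the universal property of derivations out of a free group applied to $F\cong\Z$ with basis $\{t\}$, produce a crossed homomorphism with any prescribed value on $t$ (giving surjectivity) --- is precisely the routine verification the authors had in mind.
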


For $s \in X \cup Z$, its image $s' \in V/M$ generates an infinite cyclic subgroup.  We can
thus compose $I \co \LKernkl \longrightarrow \CHom(V/M,N_Y/M)$ with the restriction
map
$$\CHom(V/M,N_Y/M) \longrightarrow \CHom(\Span{s'},N_Y/M) \cong N_Y/M$$
to obtain a homomorphism $I_s \co \LKernkl \rightarrow N_Y/M$.  The value of $I_s$ is easy to calculate.
Namely, for $f \in \LKernkl$ we have $f(s) \cdot s^{-1} \in N_Y$, and $I_s(f)$ is the
image of $f(s) \cdot s^{-1}$ in $N_Y/M$.

At this point, we have developed enough machinery to prove Theorem \ref{maintheorem:alternatekernel}, 
whose we recall statement is as follows.  Recall that
\[\AutFB{n}{k}{l}' = \Set{$f \in \Aut(F_{n,k,l})$}{$f(v) = v$ for $v \in Y \cup Z$}.\]
There is an obvious surjective map $\rho \co \AutFB{n}{k}{l}' \rightarrow \AutFB{n}{0}{l}'$, and
$\BKer{n}{k}{l}' = \Ker(\rho)$.  Theorem \ref{maintheorem:alternatekernel} asserts that
$\HH_1(\BKer{n}{k}{l}';\Q)$ has infinite rank if $n,k \geq 1$ and $n+l \geq 2$.

\begin{proof}[{Proof of Theorem \ref{maintheorem:alternatekernel}}]
By assumption, we can find $s \in X$ and $t \in X \cup Z$ such that $s \neq t$.
Observe that $\BKer{n}{k}{l}' \subset \LKernkl$.  It follows that $I_s$ restricts
to a homomorphism $I_s \co \BKer{n}{k}{l}' \rightarrow N_Y/M$.  It is enough to 
show that $I_s(\BKer{n}{k}{l}')$ contains an infinite set of independent elements.
Now, for $m \in \Z$ define $h_m = \Con{y}{t}^m \Mul{s}{y} \Con{y}{t}^{-m}$.  It is easily verified
that $h_m \in \BKer{n}{k}{l}'$.  Also, $I_s(h_m)$ is the image of
$$h_m(s) s^{-1} = (t^m y t^{-m} s) s^{-1} = t^m y t^{-m}$$
in $N_Y/M$.  Letting $T$ be the image of $t$ in $\overline{V}/\overline{V}_Y$, we see
that $I_s(h_m) = y^{m T}$.  The set $\{\text{$y^{m T}$ $|$ $m \in \Z$}\}$
is an infinite set of independent elements of $N_Y/M$, and we are done.
\end{proof}

\subsection{Group cohomology at the chain level}
\label{section:groupchains}

This section contains some standard results about group cohomology, all of which
are contained in \cite{BrownCohomology} (although with different notation).  

\paragraph{The standard cochain complex.}
Let $G$ be a group.  Denote by
$\CC^n(G;\Q)$ the $\Q$-vector space consisting of all functions $\phi \co G^{n+1} \rightarrow \Q$ which
are $G$-equivariant in the sense that 
$\phi(g \cdot g_0, \ldots, g \cdot g_n) = \phi(g_0,\ldots,g_n)$ for all
$g,g_0,\ldots,g_n \in G$.  Elements of $\CC^n(G;\Q)$ are known as {\em $n$-cochains} on $G$.
They fit into a cochain complex
\begin{equation}
\label{eqn:gcochain}
\begin{CD}
\cdots @<<< \CC^{n+1}(G;\Q) @<<< \CC^n(G;\Q) @<<< \CC^{n-1}(G;\Q) @<<< \cdots\\
\end{CD}
\end{equation}
whose differential $\delta \co \CC^n(G;\Q) \rightarrow \CC^{n+1}(G;\Q)$ is given by
$$\delta(\phi)(g_0,\ldots,g_{n+1}) = \sum_{i=0}^{n+1} (-1)^i \phi(g_0,\ldots,\hat{g}_i,\ldots,g_{n+1}).$$
The cohomology groups of \eqref{eqn:gcochain} are $\HH^{\ast}(G;\Q)$.  If $\phi \in \CC^n(G;\Q)$
is a cocycle, then we will denote by $[\phi]$ the associated element of $\HH^n(G;\Q)$.

\begin{remark}\label{re:average}
The standard cochain complex is a complex of homomorphisms from the \emph{standard resolution} of
$\Z$ over $\Z G$ to $\Q$.
The standard resolution is a chain complex whose $n^{\text{th}}$ graded piece has a basis consisting of $(n+1)$-tuples of elements of $G$.
We note that there is a common alternate notation for the simplices in the standard resolution known as ``bar notation.''
The use of bar notation leads to a different description of the standard cochain complex and a different formula for the coboundary.
We are \emph{not} using bar notation; our reason for avoiding it is that our averaging construction is clearer without it.
\end{remark}

\paragraph{First cohomology.}
The $\Q$-vector space $\HH^1(G;\Q)$ is isomorphic to the space of homomorphisms
$G \rightarrow \Q$.  Given a homomorphism $f \co G \rightarrow \Q$, the associated
element of $\CC^1(G;\Q)$ is given by
$$\phi(g_0,g_1) = f(g_1) - f(g_0) \quad \quad (g_0,g_1 \in G).$$
We will denote the element of $\HH^1(G;\Q)$ corresponding to $f$ by $[f]$.

\paragraph{Cup products.}
There is a cup product map $\cup \co \HH^p(G;\Q) \otimes \HH^q(G;\Q) \rightarrow \HH^{p+q}(G;\Q)$.
If $\phi \in \CC^p(G;\Q)$ and $\phi' \in \CC^q(G;\Q)$ are cocycles, then $[\phi] \cup [\phi'] = [\phi'']$,
where $\phi'' \in \CC^{p+q}(G;\Q)$ is the cochain given by
$$\phi''(g_0,\ldots,g_{p+q}) = \phi(g_0,\ldots,g_p) \phi'(g_p,\ldots,g_{p+q}).$$

\paragraph{Evaluating on abelian cycles.}
For all $n \geq 0$, there is a bilinear pairing
$$\omega \co \HH^n(G;\Q) \times \HH_n(G;\Q) \rightarrow \Q.$$
We will need a formula for this in the following situation.  Let $f_1,f_2 \co G \rightarrow \Z$
be homomorphisms.  Also, let $g_1,g_2 \in G$ be commuting elements.  There is 
then a homomorphism $\Z^2 \rightarrow G$ taking
the generators of $\Z^2$ to the $g_i$.  Let $c \in \HH_2(G;\Q)$ be the image
of the standard generator of $\HH_2(\Z^2;\Q) \cong \Q$ under the induced map
$\HH_2(\Z^2;\Q) \rightarrow \HH_2(G;\Q)$.  We will call $c$ the {\em abelian cycle}
determined by $g_1$ and $g_2$.  Using the cup product structure on $\HH^{\ast}(\Z^2;\Z)$, we then
have
$$\omega([f_1] \cup [f_2],c) = f_1(g_1) f_2(g_2) - f_1(g_2) f_2(g_1).$$

\paragraph{Relating different groups.}
Assume now that $H$ is a subgroup of $G$.  We then have a diagram
\begin{equation}
\label{eqn:ggammacochain}
\begin{CD}
\cdots @<<< \CC^{n+1}(G;\Q) @<<< \CC^n(G;\Q) @<<< \CC^{n-1}(G;\Q) @<<< \cdots\\
@.          @VVV                 @VVV             @VVV                 @.\\
\cdots @<<< \CC^{n+1}(H;\Q) @<<< \CC^n(H;\Q) @<<< \CC^{n-1}(H;\Q) @<<< \cdots\\
\end{CD}
\end{equation}
of cochain complexes inducing the natural map $\HH^{\ast}(G;\Q) \rightarrow \HH^{\ast}(H;\Q)$.  Here
the map $\CC^n(G;\Q) \rightarrow \CC^n(H;\Q)$ is simply restriction.  

There is a cochain complex naturally lying between the cochain complexes
in \eqref{eqn:ggammacochain}.  Denote by $\CC^n(G,H;\Q)$ the $\Q$-vector space consisting of all functions 
$\phi \co G^{n+1} \rightarrow \Q$ that are $H$-equivariant in the sense that 
$\phi(g \cdot g_0, \ldots, g \cdot g_n) = \phi(g_0,\ldots,g_n)$ for all
$g \in H$ and $g_0,\ldots,g_n \in G$.  Observe that $\CC^n(G;\Q) \subset \CC^n(G,H;\Q)$.
The spaces $\CC^n(G,H;\Q)$ fit into a cochain complex
$$\begin{CD}
\cdots @<<< \CC^{n+1}(G,H;\Q) @<<< \CC^n(G,H;\Q) @<<< \CC^{n-1}(G,H;\Q) @<<< \cdots\\
\end{CD}$$
whose cohomology groups are easily seen to equal $\HH^{\ast}(H;\Q)$.  
Further, we have maps $\CC^n(G,H;\Q) \rightarrow \CC^n(H;\Q)$ by restriction. 
So we have a commutative diagram of the form
\begin{equation}
\label{eqn:ggammacochain2}
\begin{CD}
\cdots @<<< \CC^{n+1}(G;\Q) @<<< \CC^n(G;\Q) @<<< \CC^{n-1}(G;\Q) @<<< \cdots\\
@.          @VVV                 @VVV             @VVV                 @.\\
\cdots @<<< \CC^{n+1}(G,H;\Q) @<<< \CC^n(G,H;\Q) @<<< \CC^{n-1}(G,H;\Q) @<<< \cdots\\
@.          @VVV                 @VVV             @VVV                 @.\\
\cdots @<<< \CC^{n+1}(H;\Q) @<<< \CC^n(H;\Q) @<<< \CC^{n-1}(H;\Q) @<<< \cdots\\
\end{CD}
\end{equation}
The maps $\CC^n(G,H;\Q) \rightarrow \CC^n(H;\Q)$ induce a chain homotopy equivalence.  We
will need an explicit formula for an inverse chain homotopy equivalence.  Fix a set of right
coset representatives for $H$ in $G$.  Assume that the representative of the trivial
coset is $1$.  For $g \in G$, denote by $\hat{g}$ the coset representative of the
coset $H \cdot g$.  Our inverse chain homotopy equivalence is then given by the
maps $j \co \CC^n(H;\Q) \rightarrow \CC^n(G,H;\Q)$ defined by
$$j(\phi)(g_0,\ldots,g_n) = \phi(g_0 (\hat{g}_0)^{-1}, \ldots, g_n (\hat{g}_n)^{-1}) \quad \quad (g_0,\ldots,g_n \in G).$$
This formula makes sense since $g (\hat{g})^{-1} \in H$ for all $g \in G$.  
It is easy to see that $j(\phi)$ is $H$-equivariant and compatible with the coboundary maps.

\begin{remark}
If $H$ is normal in $G$, then the diagonal action of $G$ on $\CC^n(G,H;\Q)$ descends to an action of $G/H$.
By definition, the cochains $\CC^n(G;\Q)$ are simply the invariants $\CC^n(G,H;\Q)^{G/H}$, that is, the subset of cochains that are fixed by $G/H$.
If $G/H$ is finite, then an invariant element can be found simply by taking the sum (or average) of the $G/H$--orbit of any given element of $\CC^n(G,H;\Q)$. 
The procedure of taking a cocycle in $\CC^n(H;\Q)$, representing its cohomology class with a cocycle in $\CC^n(G,H;\Q)$, and summing its orbit to get a 
cocycle in $\CC^n(G,\Q)$ defines a map $\HH^n(H;\Q)\to \HH^n(G;\Q)$ called the \emph{transfer map}.
Our construction below builds a cocycle formally using the same procedure that defines the transfer map; however, our subgroup is not of finite index so there is no transfer map to define.
As we will see below, our construction works because we use cocycles that satisfy a kind of local finiteness condition.
\end{remark}

\subsection{The cocycles $\zeta_r$}
\label{section:zeta}

Recall that we have fixed some $y \in Y$ and some $a,b \in X \cup Z$ such that $a \neq b$.  Let
$A$ and $B$ be the images of $a$ and $b$ in $\overline{V}/\overline{V}_Y$, respectively.  Lemma
\ref{lemma:yvbasis} says that $\{\text{$y^v$ $|$ $v \in \overline{V} / \overline{V}_Y$}\}$ is
a basis for the free abelian group $N_Y/M$.  For $r \in \Z$, let $\alpha_r' \co N_Y/M \rightarrow \Z$
be the homomorphism such that
$$\alpha_r'(y^v) = 
\begin{cases}
1 & \text{if $v = r A$}\\
0 & \text{otherwise}
\end{cases}
\quad \quad (v \in \overline{V}/\overline{V}_Y).$$
and let $\alpha_r \co \LKernkl \rightarrow \Z$ be the composition
$$\LKernkl \stackrel{I_b}{\longrightarrow} N_Y/M \stackrel{\alpha_r'}{\longrightarrow} \Z.$$
We thus have elements $[\alpha_r] \in \HH^1(\LKernkl;\Q)$.  By the formulas in \S \ref{section:groupchains},
the cohomology class $[\alpha_r] \cup [\alpha_0] \in \HH^2(\LKernkl;\Q)$ can be represented by
the cocycle $\eta_r \in C^2(\LKernkl;\Q)$ defined by the formula
$$\eta_r(g_0,g_1,g_2) = (\alpha_r(g_1) - \alpha_r(g_0))(\alpha_0(g_2) - \alpha_0(g_1))
\quad \quad (g_0,g_1,g_2 \in \LKernkl).$$

Our goal is to modify $\eta_r$ so that it can be extended to a cocycle on $\BKernkl$.
Using the recipe in \S \ref{section:groupchains}, we first extend $\eta_r$ to a cocycle 
$\kappa_r \in C^2(\BKernkl,\LKernkl;\Q)$ using a chain homotopy equivalence 
defined in terms of a set of coset representatives.  We will need to use a special
set of coset representatives which we now describe.
It is not hard to see that $J'_y\co \BKernkl\to  \overline{V}/\overline{V}_Y$ is surjective.
Then by the definition of $\LKernkl$ as $\Ker(J'_y)$, we have a short exact sequence:
\[1\longrightarrow \LKernkl \longrightarrow \BKernkl\stackrel{J'_y}{\longrightarrow} \overline{V}/\overline{V}_Y\longrightarrow 1.\]
We choose a normalized section $\sigma$ to $J'_y$, so that $\sigma\co\overline{V}/\overline{V}_Y\to \BKernkl$ is a set map that is a right-inverse to $J'_y$ and such that $\sigma(1)=1$.
Now, the set
$$S = \{\text{$C_{y,s}$ $|$ $s \in X \cup Z$}\} \subset \BKernkl$$
projects under $J'_y$ to a basis for the free abelian group $\overline{V}/\overline{V}_Y$.  We
can therefore choose $\sigma(x) \in \BKernkl$ such that $\sigma(x)$ is contained in
the subgroup generated by $S$.  Next, for $g \in \BKernkl$ let $\hat{g} = \sigma(J'_y(g)) \in\BKernkl$.  
So by construction, for any $g$ in $\BKernkl$, the element $\hat{g}$ 
is a representative of the coset $\LKernkl \cdot g\cdot$.
Our formula for $\kappa_r$ is then
\begin{align*}
\kappa_r(g_0,g_1,g_2) &= \eta_r(g_0 \hat{g}_0^{-1}, g_1 \hat{g}_1^{-1}, g_2 \hat{g}_2^{-1})\\
&= (\alpha_r(g_1 \hat{g}_1^{-1}) - \alpha_r(g_0 \hat{g}_0^{-1}))(\alpha_0(g_2 \hat{g}_2^{-1}) - \alpha_0(g_1 \hat{g}_1^{-1})).
\end{align*}

Now, if $\kappa_r \in C^2(\BKernkl,\LKernkl;\Q)$ was an element of 
$C^2(\BKernkl;\Q) \subset C^2(\BKernkl,\LKernkl;\Q)$, then it would define
a cohomology class in $\HH^2(\BKernkl;\Q)$.  
Alas, this is not true.  
However, we will be able to average out the action of $\overline{V}/\overline{V}_Y$ on $\kappa_r$ (as hinted in Remark~\ref{re:average}) to get a cocycle that is honestly an element of $C^2(\BKernkl;\Q)$.

We now consider that action.
The diagonal left action of $\BKernkl$ on $C^2(\BKernkl,\LKernkl;\Q)$ descends to an action of $\overline{V}/\overline{V}_Y$, because the cochains in this group are already $\LKernkl$--invariant.
Specifically, for 
$x \in \overline{V}/\overline{V}_Y$, the action is given by
$$(x\cdot \kappa_{r})(g_0,g_1,g_2) = \kappa_r(\sigma(x)^{-1} g_0, \sigma(x)^{-1} g_1, \sigma(x)^{-1} g_2)
\quad \quad (g_0,g_1,g_2 \in \BKernkl).$$

We can unravel our expression for $x\cdot \kappa_r$ by considering an appropriate action of $\overline{V}/\overline{V}_Y$ on $\Hom(\LKernkl,\Z)$.
For $\beta\in\Hom(\LKernkl,\Z)$ and $x\in\overline{V}/\overline{V}_Y$, define
\[(x\cdot \beta)(g)=\beta(\sigma(x)^{-1}g\sigma(x)).\]
This is clearly a well-defined left action: $\BKernkl$ acts on $\Hom(\LKernkl,\Z)$ by conjugating inputs, and since the action of conjugation by $\LKernkl$ on $\Hom(\LKernkl,\Z)$ is trivial, the action descends.
Now we can better explain $x\cdot\kappa_r$ with the following lemma.

\begin{lemma}
\label{lemma:kapparxformula}
For all $g_0,g_1,g_2 \in \BKernkl$, we have
$$(x\cdot\kappa_{r})(g_0,g_1,g_2) = ((x\cdot\alpha_{r})(g_1 \hat{g}_1^{-1}) - (x\cdot\alpha_{r})(g_0 \hat{g}_0^{-1}))((x\cdot\alpha_{0})(g_2 \hat{g}_2^{-1}) - (x\cdot\alpha_{0})(g_1 \hat{g}_1^{-1})).$$
\end{lemma}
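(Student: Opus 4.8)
The plan is to unravel both sides purely formally, the only subtlety being the bookkeeping of a correction term that arises because $\sigma$ is merely a set-theoretic section, not a homomorphism. Writing $a_i = \sigma(x)^{-1} g_i$, the action formula recorded just before the lemma gives $(x\cdot\kappa_r)(g_0,g_1,g_2) = \kappa_r(a_0,a_1,a_2)$, so I first expand the right-hand side using the defining formula for $\kappa_r$. This requires the coset representatives $\hat{a}_i$. Since $J'_y$ is a homomorphism and $\sigma$ is a section with $J'_y(\sigma(x)) = x$, I compute $J'_y(a_i) = J'_y(g_i) - x$, whence $\hat{a}_i = \sigma(J'_y(g_i)-x)$. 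Thus each relevant argument of $\alpha_r$ and $\alpha_0$ is $a_i \hat{a}_i^{-1} = \sigma(x)^{-1} g_i\, \sigma(J'_y(g_i)-x)^{-1}$, which lies in $\LKernkl$ because its image under $J'_y$ is $0$.

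Next I compare $a_i \hat{a}_i^{-1}$ with the conjugate $\sigma(x)^{-1}(g_i \hat{g}_i^{-1})\sigma(x)$, which is precisely the element at which $(x\cdot\alpha_r)$ and $(x\cdot\alpha_0)$ evaluate $g_i \hat{g}_i^{-1}$ by the definition of the action on $\Hom(\LKernkl,\Z)$. A direct manipulation, using $\hat{g}_i = \sigma(J'_y(g_i))$, shows
\[
a_i \hat{a}_i^{-1} = \bigl(\sigma(x)^{-1}(g_i \hat{g}_i^{-1})\sigma(x)\bigr)\, w_i,
\qquad w_i = \sigma(x)^{-1}\,\sigma(J'_y(g_i))\,\sigma(J'_y(g_i)-x)^{-1}.
\]
One checks $J'_y(w_i) = 0$, so $w_i \in \LKernkl$, and crucially $w_i \in \GenBy{S}$, since all three factors $\sigma(x)$, $\sigma(J'_y(g_i))$, $\sigma(J'_y(g_i)-x)$ lie in the subgroup $\GenBy{S}$ by our choice of $\sigma$. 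Because $\alpha_r$ and $\alpha_0$ are genuine homomorphisms on $\LKernkl$ (Corollary~\ref{corollary:lkertrivial}), applying them turns the product into a sum: $\alpha_r(a_i \hat{a}_i^{-1}) = (x\cdot\alpha_r)(g_i \hat{g}_i^{-1}) + \alpha_r(w_i)$, and likewise for $\alpha_0$.

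The main point — and the only place anything needs to be proved — is that every correction term vanishes, i.e.\ $\alpha_r(w_i) = \alpha_0(w_i) = 0$. This is where the specific forms of $\sigma$ and of $b$ enter. Since $\alpha_r = \alpha_r' \circ I_b$ and $I_b(w_i)$ is the class of $w_i(b)\,b^{-1}$ in $N_Y/M$, it suffices to observe that $w_i$ fixes $b$. But $w_i \in \GenBy{S}$, and each generator $\Con{y}{s}$ of $S$ fixes every element of $X \cup Z$ (it only conjugates $y$); as $b \in X \cup Z$ we get $w_i(b) = b$, so $I_b(w_i) = 0$ and hence $\alpha_r(w_i) = \alpha_0(w_i) = 0$. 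Substituting back into the expanded form of $\kappa_r(a_0,a_1,a_2)$, the correction terms cancel inside each difference, yielding exactly the claimed formula. I expect the only mild obstacle to be the careful verification of the factorization identity for $a_i \hat{a}_i^{-1}$ and keeping multiplicative and additive notation straight; the conceptual content is entirely the observation that the $w_i$ live in $\GenBy{S}$ and are therefore invisible to $I_b$.
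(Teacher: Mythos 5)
Your proof is correct and is essentially the paper's own argument: you produce exactly the same correction element $w_i=\sigma(x)^{-1}\,\sigma(J'_y(g_i))\,\sigma(J'_y(g_i)-x)^{-1}$ comparing $a_i\hat a_i^{-1}$ with the conjugate $\sigma(x)^{-1}(g_i\hat g_i^{-1})\sigma(x)$, and both arguments hinge on $\sigma$ having been chosen with values in $\Span{S}$ together with the fact that $\alpha_r$ and $\alpha_0$ are honest homomorphisms on $\LKernkl$. The one minor divergence is the justification that $\alpha_r(w_i)=0$: the paper locates $w_i$ in $[\Span{S},\Span{S}]$ (using that $J'_y$ maps $S$ to a basis of a free abelian group) and then dismisses the vanishing with ``Of course,'' whereas you argue directly that every element of $\Span{S}$ fixes $b$, so $I_b$ kills all of $\Span{S}\cap\LKernkl$ --- a cleaner, fully spelled-out version of that step.
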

\begin{proof}
From the definitions, it is enough to show that for any $r\in \Z$ and $g\in\BKernkl$, we have
$\alpha_r(h\hat{h}^{-1})=(x\cdot\alpha_r)(g \hat{g}^{-1})$, where $h=\sigma(x)^{-1}g$.
As noted before, $S$ projects to a basis for the free abelian
group $\overline{V}/\overline{V}_Y$.
By the definition of $\sigma$, we know $\hat{h}^{-1}$ and $\hat{g}^{-1}\sigma(x)$ are in $\Span{S}$
and map to the same element of $\overline{V}/\overline{V}_Y$ under $J'_y$.
Since $J'_y$ maps $S$ to a basis for its image (a free abelian group), the
kernel of $J'_y|_{\Span{S}}$ is $[\Span{S},\Span{S}]$.  Thus
there exists some $w \in [\Span{S},\Span{S}]$ such that $\hat{h}^{-1} =  \hat{g}^{-1}\sigma(x)w$.
Of course $\alpha_{k}(w) = 0$.  
Then
\[
\alpha_r(h\cdot\hat{h}^{-1})
=\alpha_r(\sigma(x)^{-1}g\hat{g}^{-1}\sigma(x)w)=\alpha_r(\sigma(x)^{-1}g\hat{g}^{-1}\sigma(x))
=(x\cdot\alpha_r)(g \hat{g}^{-1}),
\]
as desired.
\end{proof}

This brings us to the following important lemma.

\begin{lemma}
\label{lemma:finitelymany}
Fix $r \in \Z$ and $g \in \LKernkl$.  There then only exist finitely many 
$x \in \overline{V}/\overline{V}_Y$ such that $(x\cdot \alpha_{r})(g) \neq 0$.
\end{lemma}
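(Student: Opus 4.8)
The plan is to unwind the definition of the action and reduce the statement to the observation that $I_b(g)$ is a single finite element of the free abelian group $N_Y/M$. By definition,
\[(x\cdot\alpha_r)(g) = \alpha_r(\sigma(x)^{-1} g \sigma(x)) = \alpha_r'\bigl(I_b(\sigma(x)^{-1} g \sigma(x))\bigr),\]
so the entire problem is to understand how $I_b$ transforms when its argument is conjugated by the coset representative $\sigma(x)$. Note first that $\sigma(x)^{-1} g \sigma(x) \in \LKernkl$ since $\LKernkl = \Ker(J'_y)$ is normal in $\BKernkl$, so the expression makes sense.

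The first and central step is to establish the conjugation identity
\[I_b(\sigma(x)^{-1} g \sigma(x)) = \sigma(x)^{-1}\cdot I_b(g) \qquad (g \in \LKernkl),\]
where the action on the right is the action of $\BKernkl$ on $N_Y/M$. This is exactly where the hypothesis $g \in \LKernkl$ is used. Writing $p = \sigma(x)$ and recalling that $I$ is a crossed homomorphism on all of $\BKernkl$ (so that $I_b$ obeys $I_b(f_1 f_2) = I_b(f_1) + f_1 \cdot I_b(f_2)$ with respect to the target action on $N_Y/M$), I would compute $I_b(gp) = I_b(g) + g\cdot I_b(p)$; since $J'_y(g)=0$, Corollary~\ref{corollary:lkertrivial} says $g$ acts trivially on $N_Y/M$, so the cross term collapses to $I_b(g) + I_b(p)$. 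Applying the cocycle identity once more and using $I_b(p^{-1}) + p^{-1}\cdot I_b(p) = I_b(1) = 0$ then folds everything down to $p^{-1}\cdot I_b(g)$. I expect this cancellation to be the main obstacle, in the sense that it is the only place requiring care: one must track the crossed-homomorphism corrections and verify that the triviality of $g$ on $N_Y/M$ is precisely what kills the unwanted terms.

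The remaining steps are direct unravelings of the definitions. Since $\sigma$ is a section of $J'_y$ we have $J'_y(\sigma(x)) = x$, so the formula $f(y^v) = y^{v + J'_y(f)}$ gives $\sigma(x)^{-1}(y^v) = y^{v-x}$; that is, $\sigma(x)^{-1}$ merely shifts the index set $\overline{V}/\overline{V}_Y$ by $-x$. Writing $I_b(g) = \sum_v c_v\, y^v$ as a finite sum in the basis of Lemma~\ref{lemma:yvbasis}, we obtain $\sigma(x)^{-1}\cdot I_b(g) = \sum_v c_v\, y^{v-x}$, and applying $\alpha_r'$ — which extracts the coefficient of $y^{rA}$ — yields $(x\cdot\alpha_r)(g) = c_{rA+x}$. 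The conclusion is then immediate: this coefficient is nonzero only when $rA + x$ lies in the finite support of $I_b(g)$, i.e. only for $x$ in the finite set $\{\,w - rA \mid c_w \neq 0\,\}$, which proves the lemma. The only bookkeeping subtlety to double-check is the sign in the shift $y^v \mapsto y^{v-x}$.
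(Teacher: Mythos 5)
Your proof is correct and takes essentially the same route as the paper: write $I_b(g)$ in the basis $\{y^v\}$ of Lemma \ref{lemma:yvbasis}, note that conjugating by $\sigma(x)$ only translates the exponents, and conclude that only finitely many translates can land on $y^{rA}$; the one genuine addition is that you derive the key identity $I_b(\sigma(x)^{-1}g\sigma(x)) = \sigma(x)^{-1}\cdot I_b(g)$ from the cocycle property of $I$ together with Corollary \ref{corollary:lkertrivial}, which the paper asserts without justification. On the sign you flagged: yours is the correct one --- since $J'_y(\sigma(x)^{-1}) = -x$ the shift is $y^{v}\mapsto y^{v-x}$, so the nonzero values occur at $x = v_i - rA$, whereas the paper's proof writes the shift as $+x$ (hence $x = rA - v_i$); this is a harmless sign slip in the paper, immaterial here since either set is finite.
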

\begin{proof}
Write
$$I_b(g) = y^{v_1} + \cdots + y^{v_m} \quad \quad (v_i \in \overline{V}/\overline{V}_Y).$$
For $x \in \overline{V}/\overline{V}_Y$, we then have
$$I_b(\sigma(x)^{-1} g \sigma(x)) = y^{v_1+x} + \cdots + y^{v_m+x}.$$
It follows that $(x\cdot\alpha_{r})(g) = 0$ unless $x = r A - v_i$ for some $1 \leq i \leq m$.
\end{proof}

We now define $\zeta_r \co (\BKernkl)^3 \rightarrow \Q$ by the formula
$$\zeta_r(g_0, g_1, g_2) = \sum_{x \in \overline{V}/\overline{V}_Y} (x\cdot\kappa_{r})(g_0, g_1, g_2) \quad \quad (g_0,g_1,g_2 \in \BKernkl).$$
A priori this infinite sum does not make sense; however, Lemmas \ref{lemma:kapparxformula}
and \ref{lemma:finitelymany} imply that only finitely many terms of it are nonzero for any
particular choice of $g_i$.  Since $x\cdot\kappa_{r}$ is a cocycle in $C^2(\BKernkl, \LKernkl;\Q)$ for
all $x \in \overline{V}/\overline{V}_Y$, we have that $\zeta_r$ is a cocycle in 
$C^2(\BKernkl, \LKernkl;\Q)$.  Moreover,  by construction we have
$\zeta_r(\sigma(x)^{-1} g_0, \sigma(x)^{-1}g_1,\sigma(x)^{-1} g_2) = \zeta_r(g_0,g_1,g_2)$ for all
$x \in \overline{V}/\overline{V}_Y$ and $g_0,g_1,g_2 \in \BKernkl$.  This implies
the following lemma.

\begin{lemma}
For all $r \in \Z$, we have that $\zeta_r$ is a cocycle in $C^2(\BKernkl;\Q)$.  Hence $\zeta_r$ defines
a cohomology class $[\zeta_r] \in \HH^2(\BKernkl;\Q)$.
\end{lemma}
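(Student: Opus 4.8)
The plan is to verify that $\zeta_r$, which we already know to be a cocycle in $C^2(\BKernkl,\LKernkl;\Q)$, actually lies in the smaller space $C^2(\BKernkl;\Q)$. As noted in Remark~\ref{re:average}, the subspace $C^2(\BKernkl;\Q) \subset C^2(\BKernkl,\LKernkl;\Q)$ is precisely the set of cochains that are invariant under the residual action of $\overline{V}/\overline{V}_Y = \BKernkl/\LKernkl$. So the entire content of the lemma is the single assertion that $\zeta_r$ is fixed by this action, i.e.\ that $(w \cdot \zeta_r) = \zeta_r$ for every $w \in \overline{V}/\overline{V}_Y$. Once this invariance is established, $\zeta_r$ being a $\overline{V}/\overline{V}_Y$-invariant element of $C^2(\BKernkl,\LKernkl;\Q)$ is by definition a genuine $\BKernkl$-equivariant cochain, hence an element of $C^2(\BKernkl;\Q)$; and since the differential on $C^2(\BKernkl;\Q)$ is the restriction of the differential on $C^2(\BKernkl,\LKernkl;\Q)$ (where we already know $\zeta_r$ is a cocycle), it remains a cocycle in $C^2(\BKernkl;\Q)$ and therefore defines a class $[\zeta_r] \in \HH^2(\BKernkl;\Q)$.

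First I would record the invariance statement precisely. The action of $x \in \overline{V}/\overline{V}_Y$ on a cochain is given by $(x \cdot \kappa)(g_0,g_1,g_2) = \kappa(\sigma(x)^{-1}g_0, \sigma(x)^{-1}g_1, \sigma(x)^{-1}g_2)$. We are told in the paragraph immediately preceding the lemma that ``by construction we have $\zeta_r(\sigma(x)^{-1}g_0, \sigma(x)^{-1}g_1, \sigma(x)^{-1}g_2) = \zeta_r(g_0,g_1,g_2)$'' for all $x$ and all $g_i$, which is exactly the statement that $x \cdot \zeta_r = \zeta_r$. So the invariance is the identity I would verify. The verification is a reindexing argument: writing out the definition, $(w \cdot \zeta_r)(g_0,g_1,g_2) = \sum_{x} (x \cdot \kappa_r)(\sigma(w)^{-1}g_0, \sigma(w)^{-1}g_1, \sigma(w)^{-1}g_2)$, and since $x \cdot \kappa_r$ is applied to the triple translated by $\sigma(w)^{-1}$, this is $\sum_x ((xw) \cdot \kappa_r)(g_0,g_1,g_2)$ (using that $\sigma$ is a section so that the actions compose correctly up to the $\LKernkl$-equivariance that $\kappa_r$ already possesses). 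Re-indexing the sum over $x$ by $x' = xw$ — which is a bijection of $\overline{V}/\overline{V}_Y$ — recovers $\sum_{x'} (x' \cdot \kappa_r)(g_0,g_1,g_2) = \zeta_r(g_0,g_1,g_2)$, giving the invariance.

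The one point requiring genuine care — and the main obstacle — is that this reindexing manipulates an \emph{infinite} sum, so I must not silently assume it converges or can be rearranged without justification. The legitimacy of all of this rests on the local finiteness supplied by Lemma~\ref{lemma:finitelymany} together with Lemma~\ref{lemma:kapparxformula}: for each fixed triple $(g_0,g_1,g_2)$, only finitely many terms $(x \cdot \kappa_r)(g_0,g_1,g_2)$ are nonzero, so the sum defining $\zeta_r$ is really a finite sum at each point, and a finite sum may be reindexed by the bijection $x \mapsto xw$ with impunity. I would therefore make explicit that, by Lemmas~\ref{lemma:kapparxformula} and~\ref{lemma:finitelymany}, both $\zeta_r$ and $w \cdot \zeta_r$ are, at each argument, finite sums indexed bijectively by $x \mapsto xw$, so the equality $(w \cdot \zeta_r) = \zeta_r$ holds term-by-term after reindexing. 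I expect this to be short: the structural work (that invariance is equivalent to membership in $C^2(\BKernkl;\Q)$, and that local finiteness legitimizes the reindexing) has all been set up in the preceding subsection, so the proof is essentially the observation that the displayed invariance identity stated just before the lemma is exactly what is needed, combined with the remark that a $\overline{V}/\overline{V}_Y$-invariant cocycle in $C^2(\BKernkl,\LKernkl;\Q)$ is precisely a cocycle in $C^2(\BKernkl;\Q)$.
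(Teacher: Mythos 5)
Your proposal is correct and follows essentially the same route as the paper: the paper establishes the lemma in the paragraph preceding its statement by noting that local finiteness (Lemmas \ref{lemma:kapparxformula} and \ref{lemma:finitelymany}) makes the sum well-defined, that $\zeta_r$ is a cocycle in $C^2(\BKernkl,\LKernkl;\Q)$ since each $x\cdot\kappa_r$ is, and that $\zeta_r$ is invariant under the $\overline{V}/\overline{V}_Y$-action ``by construction,'' which by the identification $\CC^n(G;\Q) = \CC^n(G,H;\Q)^{G/H}$ of Remark \ref{re:average} gives membership in $C^2(\BKernkl;\Q)$. Your write-up simply makes explicit the reindexing $x \mapsto xw$ (justified by finiteness and the $\LKernkl$-equivariance of $\kappa_r$) that the paper leaves implicit in the phrase ``by construction.''
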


\subsection{Evaluating $[\zeta_r]$ on $\eta_m$:  the proof of Lemma \ref{lemma:zeta}}
\label{section:evaluatezeta}

We now conclude this section by proving Lemma \ref{lemma:zeta}.  Let us
recall the setup.  For $m \in \Z$, define $f_m=C_{y,a}^m C_{b,y} C_{y,a}^{-m}$
and $g = C_{a,y} C_{b,y}$.  It is easily verified that $f_m$ and $g$ commute and hence
determine an abelian cycle $\eta_m \in \HH_2(\BKernkl;\Q)$.  Next, recall
that there is an evaluation pairing
$$\omega \co \HH^2(\BKernkl;\Q) \times \HH_2(\BKernkl;\Q) \longrightarrow \Q.$$
We must prove that for $r,m \geq 1$, we have
$$\omega([\zeta_r], \eta_m) =
\begin{cases}
2 & \text{if $r=m$}\\
0 & \text{otherwise}
\end{cases}$$

Observe first that $f_m, g \in \LKernkl$.  Using the formula from \S \ref{section:groupchains}, we
see that
$$\omega([\zeta_r],\eta_m) = \sum_{x \in \overline{V}/\overline{V}_Y} \bigl((x\cdot \alpha_{r})(f_m)\cdot(x\cdot \alpha_{0})(g) - (x\cdot\alpha_{r})(g)\cdot(x\cdot\alpha_{0})(f_m)\bigr).$$
We must evaluate the various terms in this expression.  Observe that $I_b(f_m)$ is the image of
$$f_m(b) b^{-1} = (a^m y a^{-m} b a^m y^{-1} a^{-m}) b^{-1} = y^{a^m} (y^{b a^m})^{-1}$$
in $\overline{V}/\overline{V}_Y$.  
As usual, $A$ and $B$ denote the images of $a$ and $b$ in
$\overline{V}/\overline{V}_Y$, so we get that $I_b(f_m) = y^{m A} - y^{mA+B}$.  Similarly,
$I_b(g)$ is the image of 
$$g(b) b^{-1} = (y b y^{-1}) b^{-1} = y (y^b)^{-1}$$
in $\overline{V}/\overline{V}_Y$, so $I_b(g) = y - y^B$.  From these two calculations,
we obtain that
$$(x\cdot \alpha_{s})(f_m) =
\begin{cases}
1 & \text{if $x = (s-m)A$}\\
-1 & \text{if $x = (s-m)A-B$}\\
0 & \text{otherwise}
\end{cases}
\quad \quad \text{and} \quad \quad
(x\cdot \alpha_{s})(g) =
\begin{cases}
1 & \text{if $x = sA$}\\
-1 & \text{if $x = sA-B$}\\
0 & \text{otherwise}
\end{cases}$$
for all $s \in \Z$ and $x \in \overline{V}/\overline{V}_Y$.  This implies that
\begin{align*}
(x\cdot\alpha_{r})(f_m)\cdot(x\cdot \alpha_{0})(g)&= 
\begin{cases}
1 & \text{if $r=m$ and either $x=0$ or $x=-B$}\\
0 & \text{otherwise}
\end{cases}\\
(x\cdot\alpha_{r})(g)\cdot(x\cdot \alpha_{0})(f_m) &= 
\begin{cases}
1 & \text{if $r=-m$ and either $x = rA$ or $x = rA-B$}\\
0 & \text{otherwise}
\end{cases}
\end{align*}
This allows us to conclude that for $r, m \geq 1$, we have
$$\omega(\zeta_r,\eta_m) = \sum_{x \in \overline{V}/\overline{V}_Y} ((x\cdot\alpha_{r})(f_m)\cdot(x\cdot \alpha_{0})
(g) - (x\cdot\alpha_{r})(g)\cdot(x\cdot \alpha_{0})(f_m)) = 
\begin{cases}
2 & \text{if $r=m$}\\
0 & \text{otherwise,}
\end{cases}$$
as desired.

\section{The combinatorial group theory of $\BKer{n}{k}{l}$}
\label{section:generatorsandrelations}

The goal in this section is to prove Theorem \ref{maintheorem:generators}
(which gives generators for $\BKer{n}{k}{l}$) and Theorem \ref{theorem:relations} in 
\S \ref{section:genrelbker} below (which gives relations
for $\BKer{n}{k}{l}$, making precise Theorem \ref{maintheorem:relations}).  We begin in \S \ref{section:jensenwahl} by discussing a presentation of $\AutFB{n}{k}{l}$ due to Jensen--Wahl.
Next, in \S \ref{section:genrelbker} we prove Theorems \ref{maintheorem:generators} and \ref{theorem:relations}.  These proofs will depend on a calculation which is contained in \S \ref{section:action}.

Throughout this section, we will make use of the notation
\[X = \{x_1,\ldots,x_n\} \quad \text{and} \quad Y = \{y_1,\ldots,y_k\} \quad \text{and} \quad Z = \{z_1,\ldots,z_l\}.\]

\subsection{A presentation for $\AutFB{n}{k}{l}$}
\label{section:jensenwahl}

For $1 \leq i,j \leq n$ such that $i \neq j$ and $1 \leq i' \leq n$, let $P_{i,j}$ and $I_{i'}$ be
the elements of $\AutFB{n}{k}{l}$ that have the following behavior for $v \in X \cup Y \cup Z$.
\[P_{i,j}(v) = 
\begin{cases}
x_j & \text{if $v=x_i$}\\
x_i & \text{if $v=x_j$}\\
v & \text{otherwise}
\end{cases}
\quad \text{and} \quad
I_{i'}(v) =
\begin{cases}
x_{i'}^{-1} & \text{if $v=x_{i'}$}\\
v & \text{otherwise}
\end{cases}\]
The automorphisms of the form $P_{i,j}$ are \emph{swaps} and the automorphisms of the form $I_i$ are \emph{inversions}.
Elements of the form $\Mul{w^\epsilon}{v}$ for $w,v\in X\cup Y\cup Z$ are \emph{Nielsen moves}
and elements of the form $\Con{w}{v}$ for $v\in X\cup Y\cup Z$ and $w\in X\cup Z$ are \emph{conjugation moves}.
Throughout the rest of the paper, we will tacitly identify $P_{i,j}$ and $P_{j,i}$.  
We will make an extension of our notation to simplify the statement of certain relations (N2 and Q3) given below: for $v,w\in X\cup Y\cup Z$ and $\epsilon=\pm 1$, we define the symbols
\[\Mul{v^\epsilon}{w^{-1}}=\Mul{v^\epsilon}{w}^{-1}\quad \text{and}\quad \Con{v}{w^{-1}}=\Con{v}{w}^{-1}.\]

The following theorem is a restatement of Nielsen's classical presentation~\cite{NielsenPresentation} for $\Aut F_n$ in our notation and using our composition convention (right to left).
A good reference for Nielsen's presentation is its verification by McCool~\cite{McCoolVerification}.
We use McCool's terminology \hbox{N1-5} for the five classes of relations in Nielsen's presentation.
We restrict our attention to $\Aut(F(X))=\AutFB{n}{0}{0}$.

\begin{theorem}[Nielsen~\cite{NielsenPresentation}]
\label{theorem:autfnpresentation}
The group $\Aut(F(X))$ has the presentation $\GroupPres{S_N}{R_N}$, where 
\begin{align*}
S_N = &\Set{$P_{a,b}$}{$1 \leq a,b \leq n$, $a \neq b$} \cup \Set{$I_a$}{$1 \leq a \leq n$} \\
&\quad \quad \quad \quad \quad \cup \Set{$\Mul{x_a^{\epsilon}}{x_b}$}{$1 \leq a,b \leq n$, $a \neq b$, $\epsilon = \pm 1$},
\end{align*}
and $R_N$ is given in Table~\ref{table:autfnpresentation}.
\end{theorem}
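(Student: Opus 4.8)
The plan is not to reprove Nielsen's theorem from first principles but to transport the classical presentation, as verified by McCool~\cite{McCoolVerification}, into the present notation and into our right-to-left composition convention. McCool establishes that $\Aut(F(X))$ is generated by the swaps $P_{a,b}$, the inversions $I_a$, and the Nielsen moves $\Mul{x_a^\epsilon}{x_b}$ subject to the five families of relations N1--N5, but with products read left-to-right. Since the underlying group is literally the same set of automorphisms, the only content of the theorem is to check that each relation, once the convention has been switched, coincides with one of the relations recorded in Table~\ref{table:autfnpresentation}.

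First I would make the convention change precise. Writing $\cdot$ for our product (functions, composed right-to-left) and $\ast$ for the reference product, one has $f \cdot g = g \ast f$, so $f \mapsto f^{-1}$ is an isomorphism from $(\Aut(F(X)),\cdot)$ onto $(\Aut(F(X)),\ast)$; equivalently, $(\Aut(F(X)),\ast)$ is the opposite group of $(\Aut(F(X)),\cdot)$. Consequently, if $\GroupPres{S_N}{R}$ presents the group in the $\ast$-convention, then $\GroupPres{S_N}{R^{\mathrm{rev}}}$ presents it in our convention with the \emph{same} generator assignment, where each relator of $R^{\mathrm{rev}}$ is obtained from the corresponding relator of $R$ by reversing the order of its letters (inverses staying attached to their letters). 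The core of the argument is then a bookkeeping task: reverse each of McCool's relators, rewrite the inverse generators using that $P_{a,b}$ and $I_a$ are involutions and using the extended shorthands $\Mul{v^\epsilon}{w^{-1}} = \Mul{v^\epsilon}{w}^{-1}$ and $\Con{v}{w^{-1}} = \Con{v}{w}^{-1}$, and confirm family-by-family that the outcome is exactly $R_N$.

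As an independent sanity check I would verify directly that every relation in Table~\ref{table:autfnpresentation} holds in $\Aut(F(X))$, by evaluating both sides on the generators $x_1,\ldots,x_n$; this shows that the group defined by $\GroupPres{S_N}{R_N}$ surjects onto $\Aut(F(X))$, and McCool's theorem transported as above then forces this surjection to be an isomorphism. I expect the main obstacle to be entirely one of careful transcription rather than of mathematics: keeping the right-to-left order consistent throughout, correctly tracking the signs $\epsilon = \pm 1$ together with the inverse-symbol shorthands, and making sure that each reversed relator is matched to the correct member of N1--N5 rather than being mistaken for a new or a missing relation.
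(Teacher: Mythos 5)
Your proposal is correct and matches the paper's treatment: the paper gives no proof of this theorem, presenting it purely as a restatement of Nielsen's classical presentation (via McCool's verification) in its own notation and right-to-left composition convention. Your opposite-group/relator-reversal argument is exactly the precise formalization of that transport, and the remaining work is the transcription bookkeeping you identify.
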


\begin{table}[htp]
\begin{tabular}{p{0.95\textwidth}}
\toprule
\begin{center}
{\bf Nielsen's relations for $\Aut(F(X))$}
\end{center}
The relations $R_N$ consist of the following, where $\epsilon,\delta\in\{1,-1\}$ and indices $a,b,c,d$ are assumed to be distinct elements of $\{1,\dotsc,n\}$ unless stated otherwise:
\begin{itemize}
\item[N1.\ ] relations for the subgroup generated by inversions and swaps, a signed permutation group:
\begin{itemize}
\item $I_a^2=1$ and $[I_a,I_b]=1$,
\item 
$P_{a,b}^2=1$, $[P_{a,b},P_{c,d}]=1$, and
$P_{a,b}P_{b,c}P^{-1}_{a,b}=P_{a,c}$,
\item $P_{a,b}I_aP^{-1}_{a,b}=I_b$ and $[P_{a,b},I_c]=1$;
\end{itemize}
\item[N2.\ ] relations for conjugating Nielsen moves by inversions and swaps, coming from the natural action of inversions and swaps on $X^{\pm1}$:
\begin{itemize}
\item $P_{a,b}\Mul{x^\epsilon}{y}P_{a,b}^{-1}=\Mul{P_{a,b}(x^\epsilon)}{P_{a,b}(y)}$ for $x,y\in X$,
\item $I_{a}\Mul{x^\epsilon}{y}I_{a}^{-1}=\Mul{I_a(x^\epsilon)}{I_a(y)}$ for $x,y\in X$;
\end{itemize}
\item[N3.\ ] 
$\Mul{x_a^{-1}}{x_b}^{-1}\Mul{x_b^{-1}}{x_a}\Mul{x_a}{x_b}=I_aP_{a,b}$ and
$\Mul{x_a^{-1}}{x_b}\Mul{x_b}{x_a}\Mul{x_a}{x_b}^{-1}=I_bP_{a,b}$;
\item[N4.\ ] $[\Mul{x_a^\epsilon}{x_b},\Mul{x_c^\delta}{x_d}]=1$ with $a,b,c,d$ not necessarily all distinct, such that $a\neq b$, $c\neq d$, $x_a^\epsilon\notin\{ x_c^\delta, x_d, x_d^{-1}\}$ and $x_c^\delta\notin\{ x_b, x_b^{-1}\}$; and
\item[N5.\ ] $\Mul{x_b^\epsilon}{x_a}\Mul{x_c^\delta}{x_b}^{\epsilon}=\Mul{x_c^\delta}{x_b}^{\epsilon}\Mul{x_b^\epsilon}{x_a}\Mul{x_c^\delta}{x_a}$.
\end{itemize}\\
\bottomrule
\end{tabular}
\caption{
}
\label{table:autfnpresentation}
\end{table}

We now turn to $\AutFB{n}{k}{l}$.  The following presentation is due to Jensen-Wahl \cite{JensenWahl}, who
made use of an algorithm for generating such a presentation due to McCool \cite{McCoolPresentation}.

\begin{theorem}[{Jensen-Wahl, \cite{JensenWahl}}]
\label{theorem:jensenwahl}
The group $\AutFB{n}{k}{l}$ has a finite presentation $\GroupPres{S}{R}$ where 
the generating set $S$ consists of the following elements:
\begin{itemize}
\item $P_{i,j}$ for $1 \leq i,j \leq n$ such that $i \neq j$,
\item $I_{i}$ for $1 \leq i \leq n$,
\item $\Mul{x^{\epsilon}}{v}$ for $\epsilon = \pm 1$, $x \in X$, and $v \in X \cup Y \cup Z$ such that $v \neq x$,
\item $\Con{v}{w}$ for $v \in Y \cup Z$ and $w \in X \cup Y \cup Z$ such that $v \neq w$,
\end{itemize}
and the relations $R$ are given in Table~\ref{table:jensenwahl}.
\end{theorem}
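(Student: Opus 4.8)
The plan is to identify $\AutFB{n}{k}{l}$ with the stabilizer in $\Aut(F_{n,k,l})$ of the ordered tuple of conjugacy classes $(\Conj{y_1},\ldots,\Conj{y_k},\Conj{z_1},\ldots,\Conj{z_l})$ and to present it by McCool's peak-reduction method, taking Nielsen's presentation of the full automorphism group (Theorem~\ref{theorem:autfnpresentation}) as the starting point. First I would confirm that the proposed generators genuinely lie in $\AutFB{n}{k}{l}$: a conjugation move $\Con{v}{w}$ with $v \in Y \cup Z$ sends $v \mapsto wvw^{-1}$ and fixes every other generator, so it preserves $\Conj{v}$ and each other $\Conj{y_i}, \Conj{z_j}$; a Nielsen move $\Mul{x^\epsilon}{v}$ with $x \in X$ alters only the image of $x$, hence fixes every $y_i$ and $z_j$ on the nose; and $P_{i,j}, I_i$ merely permute and invert the $x$'s. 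Generation is then the content of Whitehead's theorem together with McCool's refinement: the stabilizer of a finite set of conjugacy classes is generated by the Whitehead automorphisms that fix that set, and the displayed set $S$ is exactly this collection (the Nielsen generators supported on $X$, the mixed Nielsen moves $\Mul{x^\epsilon}{v}$ with $v \in Y \cup Z$, and the conjugations $\Con{v}{w}$ with $v \in Y \cup Z$).

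For the relations I would build McCool's $2$-complex: its vertices are the tuples of conjugacy classes in the $\Aut(F_{n,k,l})$-orbit of the fixed tuple, its edges are single Whitehead moves, and its $2$-cells are attached along the (finitely many, explicit) identities among Whitehead automorphisms that extend Nielsen's relations N1--N5 of Table~\ref{table:autfnpresentation}. Whitehead's peak-reduction theorem shows that the word length of a conjugacy class with respect to $X \cup Y \cup Z$ behaves convexly along reduced edge paths, so the full subcomplex spanned by the minimal-length vertices is connected and its inclusion is $\pi_1$-surjective; the stabilizer of the basepoint is then the fundamental group of the based quotient, and a presentation can be read directly off the complex. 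The final bookkeeping step is to rewrite this output into the finite form $R$ recorded in Table~\ref{table:jensenwahl}. This is a finite verification with two halves: each relation in $R$ must be shown to hold in $\AutFB{n}{k}{l}$, which one does by evaluating both sides on the free basis $X \cup Y \cup Z$ of $F_{n,k,l}$, and conversely every relation produced by the peak-reduction complex must be exhibited as a consequence of $R$.

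The hard part is the second half of that bookkeeping -- completeness, namely that the $2$-cells, equivalently the relations in $R$, suffice to kill every relation among the generators. This is exactly the simple-connectivity input provided by peak reduction, and it is why the theorem rests on McCool's machinery rather than on an elementary calculation: checking soundness (that the listed relations hold) is routine, whereas checking that nothing is missing requires the full peak-reduction apparatus controlling how the Whitehead factorizations of a stabilizing automorphism can be interconverted. Since a complete and careful execution of this step is already carried out by Jensen--Wahl using McCool's algorithm, my plan would be to give the identification of the generators and the soundness of the relations in $R$ directly, and to invoke \cite{JensenWahl} (building on \cite{McCoolPresentation}) for the completeness step.
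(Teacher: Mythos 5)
Your proposal is correct and matches the paper's treatment: the paper does not prove this theorem but imports it directly from Jensen--Wahl \cite{JensenWahl}, whose argument runs exactly through the McCool/Whitehead peak-reduction machinery you sketch (via \cite{McCoolPresentation}), and your plan likewise defers the completeness step to that citation. The only point worth flagging is that the paper's remarks following the theorem note that it slightly modifies Jensen--Wahl's original table (adding commuting relations in Q3.1--4 whose omission the authors believe to be an error in \cite{JensenWahl}), so the ``finite bookkeeping'' verification you describe is genuinely necessary and is precisely where such a correction surfaces.
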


\begin{table}[htp]
\begin{tabular}{p{0.95\textwidth}}
\toprule
\begin{center}
{\bf Jensen and Wahl's relations for $\AutFB{n}{k}{l}$}
\end{center}

The relations $R$ consist of the following:
\begin{itemize}
\item[Q1.\ ] all the relations $R_N$ for $\Aut(F(X))$ from Nielsen's presentation, which are relations between among elements of $S_N$, a subset of $S$;
\item[Q2.\ ]
relations that certain generators commute:
\begin{itemize}
\item[(1)\ ] $[\Mul{x^\epsilon}{v},\Mul{w^\delta}{z}]=1$ for $\epsilon,\delta=\pm1$, $x,v,w\in X$ and $z\in Y\cup Z$, with $x^\epsilon\neq w^\delta$ and $x \neq v$;
\item[(2)\ ] $[\Mul{x^\epsilon}{v},\Con{w}{z}]=1$ for $\epsilon=\pm1$, $x,v\in X$ and $w,z\in Y\cup Z$, with $w\notin\{v,z\}$ and $x \neq v$;
\item[(3)\ ] $[\Con{u}{v},\Con{w}{z}]=1$ for $u,v,w,z\in Y\cup Z$ with $u\notin\{v,w,z\}$ and $w\notin\{v,z\}$;
\end{itemize}
\item[Q3.\ ] 
relations conjugating other generators by swaps and inversions:
\begin{itemize}
\item[(1)\ ] $P_{i,j} \Mul{x^{\epsilon}}{z} P_{i,j}^{-1} = \Mul{P_{i,j}(x^{\epsilon})}{z}$ for $\epsilon = \pm 1$, $x \in X$, $z \in Y \cup Z$, and $1 \leq i,j \leq n$
such that $i \neq j$;
\item[(2)\ ] $P_{i,j} \Con{z}{x} P_{i,j}^{-1} = \Con{z}{P_{i,j}(x)}$ for $x \in X$, $z \in Y \cup Z$, and $1 \leq i,j \leq n$ such that $i \neq j$;
\item[(3)\ ] $I_{i} \Mul{x^{\epsilon}}{z} I_{i}^{-1} = \Mul{I_{i}(x^{\epsilon})}{z}$ for $\epsilon = \pm 1$, $x \in X$, $z \in Y \cup Z$, and $1 \leq i \leq n$;
\item[(4)\ ] $I_{i} \Con{z}{x} I_{i}^{-1} = \Con{z}{I_{i}(x)}$ for $x \in X$, $z \in Y \cup Z$, and $1 \leq i \leq n$;
\end{itemize}
\item[Q4.\ ]
\begin{itemize}
\item[(1)\ ] $\Mul{x^{\epsilon}}{w}^{-\delta} \Mul{w^{\delta}}{v} \Mul{x^{\epsilon}}{w}^{\delta} = \Mul{x^{\epsilon}}{v} \Mul{w^{\delta}}{v}$ for
$\epsilon,\delta=\pm 1$, $x,w \in X$, and $v \in X \cup Y \cup Z$ such that $x \neq w$, $x \neq v$, and $w \neq v$;
\item[(1$'$)\ ] $\Con{z}{x}^{-\epsilon} \Mul{x^{\epsilon}}{v} \Con{z}{x}^{\epsilon} =  \Con{z}{v}\Mul{x^{\epsilon}}{v}$ for $\epsilon = \pm 1$, $x \in X$, $z \in Y \cup Z$,
and $v \in X \cup Y \cup Z$ such that $x \neq v$ and $z \neq v$;
\item[(2)\ ] $\Con{z}{v}^{\epsilon} \Mul{x^{\delta}}{z} \Con{z}{v}^{-\epsilon} = \Mul{x^{\delta}}{v}^{-\epsilon} \Mul{x^{\delta}}{z} \Mul{x^{\delta}}{v}^{\epsilon}$
for $\epsilon, \delta = \pm 1$, $x \in X$, $z \in Y \cup Z$, and $v \in X \cup Y \cup Z$ such that $x \neq v$ and $z \neq v$;
\item[(2$'$)\ ] $\Con{z}{v}^{\epsilon} \Con{w}{z} \Con{z}{v}^{-\epsilon} = \Con{w}{v}^{-\epsilon} \Con{w}{z} \Con{w}{v}^{\epsilon}$ for $\epsilon = \pm 1$,
$z,w \in Y \cup Z$, and $v \in X \cup Y \cup Z$ such that $z \neq w$, $z \neq v$, and $w \neq v$; and
\end{itemize}
\item[Q5.\ ] $\Con{v}{x}^{-\epsilon} \Mul{x^{-\epsilon}}{v} \Con{v}{x}^{\epsilon} = \Mul{x^{\epsilon}}{v}^{-1}$ for $\epsilon = \pm 1$, $x \in X$, and $v \in Y \cup Z$.
\end{itemize}\\
\bottomrule
\end{tabular}
\caption{}
\label{table:jensenwahl}
\end{table}

\begin{remark}
In Jensen--Wahl's original version of the presentation, the relations Q1 were not given explicitly; rather they specified that Q1 should consist of some set of relations for $\Aut(F(X))$ with respect to the generating set $S_N$.
Of course, Nielsen's relations suffice.
We have made minor changes to the presentation that are necessary for giving it in our notation and with our conventions;
this includes renumbering the subclasses of relations Q3 and Q4.
\end{remark}

\begin{remark}
In each of the classes of relation Q3.1--4, there are some relations that state that Nielsen moves and conjugation moves not in $S_N$ commute with swaps and inversions that share no indices in common.
These relations do not appear in Jensen--Wahl's original presentation; we believe their omission to be an error in~\cite{JensenWahl}, although a largely inconsequential one.
A careful reading of Jensen--Wahl's proof of this theorem indicates the need for these extra relations.
\end{remark}

\subsection{Generators and relations for $\BKer{n}{k}{l}$}
\label{section:genrelbker}

Our main tool for studying the combinatorial group theory of $\BKer{n}{k}{l}$ is a certain
special presentation for $\AutFB{n}{k}{l}$ which is a slight modification of Jensen-Wahl's presentation
from Theorem \ref{theorem:jensenwahl}.
Our presentation is inspired by the split exact sequence
\[1 \longrightarrow \BKer{n}{k}{l} \longrightarrow \AutFB{n}{k}{l} \longrightarrow \AutFB{n}{0}{l} \longrightarrow 1\]
from \S \ref{section:introduction}.
Since this sequence is split, $\AutFB{n}{k}{l}$ is the semidirect product of 
$\AutFB{n}{0}{l}$ and $\BKer{n}{k}{l}$.  Our presentation resembles the standard presentation
for a semidirect product.

We begin by discussing our generating set, which consists of $S_Q \cup S_K$ where $S_Q$ and $S_K$ are defined by
\begin{align*}
S_Q = &\Set{$P_{i,j}$}{$1 \leq i,j \leq n$, $i \neq j$} \cup \Set{$I_{i}$}{$1 \leq i \leq n$} \cup 
\Set{$\Con{z}{v}$}{$v \in X\cup Z$, $z \in Z$} \\
&\quad\cup 
\Set{$\Mul{x^{\epsilon}}{v}$}{$x \in X$, $v \in X \cup Z$, $x \neq v$, $\epsilon = \pm 1$} \\
\end{align*}
and
\begin{align*}
S_K =
 & \Set{$\Mul{x^{\epsilon}}{y}$}{$x \in X$, $y \in Y$, $\epsilon = \pm 1$}\cup \Set{$\Con{z}{y}$}{$z \in Z$, $y \in Y$}\\
&\quad \cup \Set{$\Con{y}{v}$}{$y \in Y$, $v \in X \cup Y \cup Z$, $y \neq v$}.
\end{align*}
Of course, $S_Q \cup S_K$ is the generating set from Theorem \ref{theorem:jensenwahl}.  
Clearly $S_K \subset \BKer{n}{k}{l}$, and by Theorem \ref{theorem:jensenwahl} the set $S_Q$ generates
$\AutFB{n}{0}{l} \subset \AutFB{n}{k}{l}$.  
Table \ref{table:action} gives an expression in the 
generating set $S_K$ for $s t s^{-1}$ and $s^{-1} t s$, for each choice of $s \in S_Q$ and $t \in S_K$.
The correctness of these expressions as equations in $\AutFB{n}{k}{l}$ can be verified by direct computation, or by using the relations in Theorem~\ref{theorem:jensenwahl}.
We have provided hints for the reader who wishes to derive these relations from the earlier ones.
With Table~\ref{table:action}, we can prove Theorem \ref{maintheorem:generators}, which we recall asserts that $S_K$ generates $\BKer{n}{k}{l}$.

{\renewcommand{\arraystretch}{1.2}
\begin{table}[t!p]
\begin{center}
\begin{tabular}{l@{\hspace{28pt}}l@{\hspace{28pt}}l@{\hspace{28pt}}l}
\toprule
$\boldsymbol{t \in S_K}$ & $\boldsymbol{s \in S_Q^{\pm1}}$ & $\boldsymbol{s t s^{-1}}$ & Relations Used \\
\midrule
$\Mul{x_a^{\epsilon}}{y}$ & $\Mul{x_a^{\epsilon}}{x_b}^\delta$      & $\Con{y}{x_b}^{-\delta} \Mul{x_a^{\epsilon}}{y} \Con{y}{x_b}^{\delta}$ & Q4.2 \\
                            & $\Mul{x_a^{\epsilon}}{z_i}^\delta$    & $\Con{y}{z_i}^{-\delta} \Mul{x_a^{\epsilon}}{y} \Con{y}{z_i}^{\delta}$  & Q4.2\\
			    & $\Mul{x_b^{\delta}}{x_a}^\epsilon$    & $\Mul{x_a^\epsilon}{y} \Con{y}{x_a}^{-\epsilon} \Mul{x_b^{\delta}}{y}^{-1} \Con{y}{x_a}^{\epsilon}$ & Q5, Q2.1-2, Q4.1 \\
 			    & $\Mul{x_b^{\delta}}{x_a}^{-\epsilon}$ & $\Mul{x_a^\epsilon}{y} \Mul{x_b^{\delta}}{y}$ & Q4.1 \\
                            & $\Con{z_i}{x_a}^\epsilon$             &  $\Mul{x_a^\epsilon}{y} \Con{y}{x_a}^{-\epsilon} \Con{z_i}{y}^{-1} \Con{y}{x_a}^{\epsilon}$ & Q5, Q2.2-3, Q4.1$'$ \\
                            & $\Con{z_i}{x_a}^{-\epsilon}$          & $\Mul{x_a^\epsilon}{y} \Con{z_i}{y}$ & Q4.1$'$\\
                            & $P_{a,b}$                             & $\Mul{x_b^{\epsilon}}{y}$ & Q3.1 \\
			    & $I_{a}$                               & $\Mul{x_a^{-\epsilon}}{y}$ & Q3.3 \\
\midrule
$\Con{y}{x_a}$              & $\Mul{x_a^\epsilon}{x_b}^\delta$      & $( \Con{y}{x_a}^\epsilon \Con{y}{x_b}^\delta)^\epsilon$ & Q4.1$'$ \\
                            & $\Mul{x_a^\epsilon}{z_i}^\delta$      & $( \Con{y}{x_a}^\epsilon \Con{y}{z_i}^\delta)^\epsilon$ & Q4.1$'$ \\
                            & $P_{a,b}$                             & $\Con{y}{x_b}$     & Q3.2  \\
			    & $I_{a}$                               & $\Con{y}{x_a}^{-1}$ & Q3.4 \\
\midrule
$\Con{z_i}{y}$              & $\Con{z_i}{x_a}^\epsilon$             & $\Con{y}{x_a}^{-\epsilon} \Con{z_i}{y} \Con{y}{x_a}^\epsilon$ & Q4.2$'$ \\
                            & $\Mul{x_a^{\epsilon}}{z_i}^\delta$    & $\Con{z_i}{y}\big[\Mul{x_a^{\epsilon}}{y},\Con{y}{z_i}^{-\delta}\big]$ & Q4.2 (twice) \\
			    & $\Con{z_i}{z_j}^\epsilon$             & $\Con{y}{z_j}^{-\epsilon} \Con{z_i}{y} \Con{y}{z_j}^\epsilon$ & Q4.2$'$ \\
			    & $\Con{z_j}{z_i}^\epsilon$             & $\Con{z_i}{y}\big[\Con{z_j}{y},\Con{y}{z_i}^{-\epsilon}\big] $   & Q4.2$'$ (twice) \\
\midrule
$\Con{y}{z_i}$              & $\Con{z_i}{x_a}^\epsilon$             & $\Con{y}{x_a}^{-\epsilon} \Con{y}{z_i} \Con{y}{x_a}^\epsilon$ & Q4.2$'$ \\
                            & $\Con{z_i}{z_j}^\epsilon$             & $\Con{y}{z_j}^{-\epsilon} \Con{y}{z_i} \Con{y}{z_j}^\epsilon$ & Q4.2$'$ \\
\bottomrule
\end{tabular}
\caption{
For $s \in S_Q^{\pm1}$ and $t \in S_K$, this table gives $s t s^{-1}$ in terms of the generating set $S_K$.
If there is no entry for a particular $s \in S_Q^{\pm}$ and $t \in S_K$, then $s t s^{-1} = s^{-1} t s = t$ (and these are relations in Q2).
In this table, $y\in Y$, $\epsilon,\delta=\pm1$, $1\leq a,b\leq n$ and $1\leq i,j\leq l$.
}
\label{table:action}
\end{center}
\end{table}
}

\begin{proof}[Proof of Theorem \ref{maintheorem:generators}]
In the presentation for $\AutFB{n}{k}{l}$ from Theorem \ref{theorem:jensenwahl}, setting
$t=1$ for all $t \in S_K$ results in the presentation from the same theorem for $\AutFB{n}{0}{l}$.  This
implies that $S_K$ normally generates $\BKer{n}{k}{l}$.  The calculations in Table \ref{table:action}
then show that the subgroup of $\AutFB{n}{k}{l}$ generated by $S_K$ is normal, so $S_K$ generates $\BKer{n}{k}{l}$.
\end{proof}

We now turn to the relations for our new presentation of $\AutFB{n}{k}{l}$, which consist of $R_Q \cup R_K \cup R_{\conj}$, where $R_Q$, $R_K$, and
$R_{\conj}$ are as defined below. 
First, $R_Q \subset F(S_Q)$ is the subset of the relations $R$ from Theorem~\ref{theorem:jensenwahl} that are in $F(S_Q)$.
By inspecting Theorem~\ref{theorem:jensenwahl}, it is apparent that $\GroupPres{S_Q}{R_Q} \cong \AutFB{n}{0}{l}$, since these are exactly the relations that appear when $k=0$.
Next, let 
\begin{equation}
\label{eqn:definef}
f \colon S_Q^{\pm 1} \times S_K \rightarrow F(S_K)
\end{equation}
be the function that takes $(s^{\epsilon},t) \in S_Q^{\pm 1} \times S_K$ to the expression for 
$s^{\epsilon} t s^{-\epsilon}$ given by Table \ref{table:action}.  
Define
\[R_{\conj} = \Set{$s^{\epsilon} t s^{-\epsilon} (f(s^{\epsilon},t))^{-1}$}{$s \in S_Q$, $t \in S_K$, $\epsilon = \pm 1$}.\]
Finally, we define $R_K$ to be the subset of the relations $R$ from Theorem~\ref{theorem:jensenwahl} that lie entirely within $F(S_K)$.
The reader can check that this coincides with the set of relations R1-R5 from \S \ref{section:introduction}.

We can now state our presentation.

\begin{proposition}
\label{proposition:autfnklpres}
We have $\AutFB{n}{k}{l} \cong \GroupPres{S_K \cup S_Q}{R_K \cup R_Q \cup R_{\conj}}$.
\end{proposition}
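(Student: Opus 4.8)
The plan is to prove the proposition by a Tietze transformation relating our presentation to the Jensen--Wahl presentation $\GroupPres{S}{R}$ of $\AutFB{n}{k}{l}$ from Theorem~\ref{theorem:jensenwahl}, where $S = S_Q \cup S_K$. Since both presentations have the same generating set, it suffices to show that the relator sets $R$ and $R_Q \cup R_K \cup R_{\conj}$ have the same normal closure in the free group $F(S)$. One inclusion is immediate: $R_Q$ and $R_K$ are by definition subsets of $R$, and each relator in $R_{\conj}$ asserts an identity $s^\epsilon t s^{-\epsilon} = f(s^\epsilon,t)$ that holds in $\AutFB{n}{k}{l}$ (this is exactly the content of Table~\ref{table:action}, whose final column records a derivation of each entry from the relations Q1--Q5). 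Hence every relator in $R_Q \cup R_K \cup R_{\conj}$ lies in $\NormalSpan{R}$, and writing $G = \GroupPres{S}{R_Q \cup R_K \cup R_{\conj}}$ we obtain a surjection from $G$ onto $\AutFB{n}{k}{l}$.

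Before attacking the reverse inclusion, I would record the structural consequence of $R_{\conj}$ that organizes the rest of the argument. The relators $R_{\conj}$ say precisely that conjugating any generator in $S_K$ by any generator in $S_Q^{\pm 1}$ lands back in $\GenBy{S_K}$, so $\GenBy{S_K}$ is normal in $G$; setting every $t \in S_K$ equal to $1$ kills $R_K$ and $R_{\conj}$ and identifies $G/\GenBy{S_K}$ with $\GroupPres{S_Q}{R_Q} \cong \AutFB{n}{0}{l}$. Thus $G$ is a split extension of $\AutFB{n}{0}{l}$ by $\GenBy{S_K}$, and using the rewriting rule $s^\epsilon t = f(s^\epsilon,t)\, s^\epsilon$ coming from $R_{\conj}$ one can push every generator of $S_K$ to the left of every generator of $S_Q$, expressing any word of $F(S)$ modulo $R_{\conj}$ in the form $W_K W_Q$ with $W_K \in F(S_K)$ and $W_Q \in F(S_Q)$.

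For the reverse inclusion I would show that each relator $r \in R$ lies in $\NormalSpan{R_Q \cup R_K \cup R_{\conj}}$, sorting the relations of $R$ by how their letters distribute between $S_Q$ and $S_K$. A relator involving only $S_Q$-generators is one of $R_Q$, and a relator involving only $S_K$-generators is one of $R_K$ (indeed $R_K$ is exactly R1--R5), so these cases are immediate. The remaining, mixed relators (certain cases of Q2, Q3, and Q4) are the substance of the proof: for each such $r$ I would apply the rewriting of the previous paragraph to put $r$ in the form $W_K W_Q$. Projecting to $\AutFB{n}{0}{l}$ forces $W_Q \in \NormalSpan{R_Q}$, so what remains is to verify that the $S_K$-word $W_K$ obtained is a consequence of $R_K$.

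This last verification is the main obstacle, and it genuinely requires invoking $R_K$ rather than mere free cancellation. For instance, a mixed relation of type Q4 in which the $S_Q$-generator $\Mul{x^\delta}{z}$ is conjugated by the $S_K$-generator $\Con{z}{y}$ reduces, after substituting the Table~\ref{table:action} values for the conjugates, to a word $W_K$ built from $\Con{z}{y}$, $\Mul{x^\delta}{y}$ and their $\Con{y}{z}$-conjugates; this word is not freely trivial and is killed only modulo R1--R5. I would therefore carry out, case by case over the finitely many index patterns of the mixed relations, the reduction of each relator to a word in $F(S_K)$ and then check directly that this word lies in $\NormalSpan{R_K}$. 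The bookkeeping is lengthy but mechanical, and completing it establishes $\NormalSpan{R} = \NormalSpan{R_Q \cup R_K \cup R_{\conj}}$ and hence the proposition.
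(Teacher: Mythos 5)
Your high-level strategy coincides with the paper's: both reduce the proposition to showing that every Jensen--Wahl relator is a consequence of $R_K \cup R_Q \cup R_{\conj}$, and your treatment of the easy inclusion, of the unmixed relators, and of the $W_Q$-part via the retraction to $\AutFB{n}{0}{l}$ is fine. The gap is in your last step: the claim that for each mixed relator the residual $S_K$-word $W_K$ lies in the normal closure of $R_K$, and that this can be checked mechanically. That claim is false, not merely unverified. Take your own example, the Q4.2 relator $\Con{z}{y}^{\epsilon}\Mul{x^\delta}{z}\Con{z}{y}^{-\epsilon}\,\Mul{x^\delta}{y}^{-\epsilon}\Mul{x^\delta}{z}^{-1}\Mul{x^\delta}{y}^{\epsilon}$ with $z \in Z$, $y \in Y$. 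Abbreviating $C=\Con{z}{y}$, $D=\Con{y}{z}$, $M=\Mul{x^\delta}{y}$, and substituting the Table~\ref{table:action} entries $f(\Mul{x^\delta}{z},C)=C[M,D^{-1}]$ and $f(\Mul{x^\delta}{z},M)=D^{-1}MD$, the rewriting gives (for $\epsilon=1$) $W_K = C\,D^{-1}MDM^{-1}\,C^{-1}\,D^{-1}M^{-1}D\,M$. Modulo the relation $[M,C]=1$ (an instance of R1.2), $W_K$ is a consequence of $R_K$ if and only if $[D^{-1}MD,\,CM]=1$ is --- and this is exactly relation C2.1 of Lemma~\ref{lemma:newcommutators}, which does \emph{not} follow from R1--R5 alone. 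Concretely, for $(n,k,l)=(1,1,1)$ the generator $D=\Con{y_1}{z_1}$ occurs in no relation of $R_K$ (every instance of R2--R4 is vacuous when $k=l=1$, and the side conditions in R1 exclude $D$), so $\GroupPres{S_K}{R_K}$ splits as a free product $G_0 \ast \langle D \rangle$; there $[D^{-1}MD,CM]$ is an alternating product of nontrivial syllables, hence nontrivial. So no amount of case-by-case bookkeeping with R1--R5 can kill $W_K$.

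This failure is not a technicality; it is the central subtlety of the section. What is true, and what suffices (all normal closures are taken in $F(S_Q\cup S_K)$ anyway), is that $W_K$ is a consequence of $R_K$ \emph{together with} $R_{\conj}$: the paper's Lemma~\ref{lemma:newcommutators} derives C2.1 and C2.2 precisely by conjugating R1-relations by $S_Q$-generators and rewriting with $R_{\conj}$, and its proof of Proposition~\ref{proposition:autfnklpres} then handles the only three families of Jensen--Wahl relators not already lying verbatim in $R_K\cup R_Q\cup R_{\conj}$ (the cases of Q4.1$'$, Q4.2, Q4.2$'$ involving a letter of $Y$) using $R_{\conj}$, R1, and C2.1--C2.2. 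Indeed, the fact that these kernel relations require $S_Q$-conjugation and are not consequences of $R_K$ itself is exactly why Theorem~\ref{theorem:relations} must present $\BKer{n}{k}{l}$ by the infinite set $\hat{R}_K$ rather than by $R_K$, consistent with Theorem~\ref{maintheorem:notfinitepres}. Your argument becomes correct if, in the final verification, you work modulo $R_K\cup R_{\conj}$ rather than modulo $R_K$; as written, it cannot be completed.
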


Before proving this, we record a few additional relations.
The relations C2 are used to prove Proposition~\ref{proposition:autfnklpres}; the relations C1 will be used later.
\begin{lemma}\label{lemma:newcommutators}
The following relations in $S_K$ follow from $R_K$ and $R_{\conj}$, and therefore hold in $\GroupPres{S_K \cup S_Q}{R_K \cup R_Q \cup R_{\conj}}$:
\begin{itemize}
\item[C1.\ ]
\begin{itemize}
\item[(1)] $\big[\Con{y}{v}^\epsilon\Mul{x_a^\delta}{y}\Con{y}{v}^{-\epsilon},\ \Mul{x_b^\zeta}{y}\big]=1$
for $\epsilon,\delta,\zeta=\pm1$, $x_a,x_b\in X$, $y\in Y$ and $v\in X\cup Z$ with $x_a^\delta\neq x_b^\zeta$, $v\notin\{x_a,x_b,y\}$.
\item[(2)] $\big[\Con{y}{v}^\epsilon\Mul{x^\delta}{y}\Con{y}{v}^{-\epsilon},\ \Con{z}{y}\big]=1$
for $\epsilon,\delta=\pm1$, $x\in X$, $y\in Y$, $z\in Y\cup Z$ and $v\in X\cup Z$, with $z\neq y$ and $v\notin\{z,x,y\}$.
\item[(3)] $\big[\Con{y}{v}^\epsilon\Con{z}{y}\Con{y}{v}^{-\epsilon},\ \Con{w}{y}\big]=1$
for $\epsilon=\pm1$, $y\in Y$, $z,w\in Y\cup Z$ and $v\in X\cup Z$ with $w\neq u$, $y\notin\{z,w\}$ and $v\notin \{z,w,y\}$.
\end{itemize}
\item[C2.\ ]
\begin{itemize}
\item[(1)]
$\big[\Con{y}{z}^\epsilon\Mul{x^\delta}{y}\Con{y}{z}^{-\epsilon},\ \Con{z}{y}\Mul{x^\delta}{y}\big]=1$ for $\epsilon,\delta=\pm1$, $y\in Y$, $z\in Z$ and $x\in X$.
\item[(2)]
$\big[\Con{y}{z}^\epsilon\Con{w}{y}\Con{y}{z}^{-\epsilon},\ \Con{z}{y}\Con{w}{y}\big]=1$ for $\epsilon=\pm1$, $y\in Y$, and $w,z\in Z$ with $w\neq z$.
\end{itemize}
\end{itemize}
\end{lemma}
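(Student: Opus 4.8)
The plan is to derive each relation in C1 and C2 by pushing all conjugation by $\Con{y}{v}^{\pm1}$ across the commutator, re-expressing it as conjugation by a single generator of $S_Q$, and then collapsing everything to relations already present in $R_K$. The key observation is that reading several rows of Table~\ref{table:action} backwards expresses a conjugate of the shape $\Con{y}{v}^{\epsilon}\,g\,\Con{y}{v}^{-\epsilon}$ (for $v \in X\cup Z$ and $g$ a single $S_K$-generator) as $s\,g\,s^{-1}$ for a suitable $s\in S_Q$. For instance the rows with $t=\Mul{x_a^{\epsilon}}{y}$ give $\Con{y}{v}^{\epsilon}\Mul{x_a^\delta}{y}\Con{y}{v}^{-\epsilon} = \Mul{x_a^\delta}{v}^{-\epsilon}\,\Mul{x_a^\delta}{y}\,\Mul{x_a^\delta}{v}^{\epsilon}$, and the rows with $t=\Con{z}{y}$ give the analogous identity with conjugator $s=\Con{z}{v}^{-\epsilon}$; all of these rewritings are instances of the relations $R_{\conj}$.

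Granting this, the engine of the argument is the elementary identity $[s g s^{-1}, h] = 1 \iff [g, s^{-1} h s] = 1$, obtained by conjugating the commutator by $s^{-1}$. For C1 the second entry $h$ is a single $S_K$-generator ($\Mul{x_b^\zeta}{y}$, $\Con{z}{y}$, or $\Con{w}{y}$). After the rewriting I replace the first entry by $sgs^{-1}$ with $s\in S_Q$ and conjugate by $s^{-1}$, reducing the relation to $[g,\, s^{-1}hs]=1$. The disjointness hypotheses on the indices (e.g.\ $v\notin\{x_a,x_b,y\}$ in C1(1)) are exactly what guarantees that the pair $(s,h)$ has \emph{no} entry in Table~\ref{table:action}, so that $s^{-1}hs = h$ by a commuting relation of Q2 contained in $R_{\conj}$. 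We are then left with $[g,h]=1$, which is one of the basic commuting relations R1 of $R_K$. Running the three cases C1(1)--C1(3), together with the subcases $v\in X$ versus $v\in Z$ and $z,w\in Y$ versus $z,w\in Z$, in this way establishes C1.

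For C2 the second entry of the commutator is a \emph{product} of two $S_K$-generators, so I would first use R1 to reorder it into an $S_Q$-conjugate as well. For example in C2(1) the instance $[\Mul{x^\delta}{y},\Con{z}{y}]=1$ of R1(2) lets one recognize $\Con{z}{y}\Mul{x^\delta}{y}=\Mul{x^\delta}{y}\Con{z}{y}$, which by the $t=\Mul{x_a^\epsilon}{y}$, $s=\Con{z_i}{x_a}^{-\epsilon}$ row of Table~\ref{table:action} equals $\Con{z}{x}^{-\delta}\,\Mul{x^\delta}{y}\,\Con{z}{x}^{\delta}$, a conjugate of $\Mul{x^\delta}{y}$ by the $S_Q$-generator $\Con{z}{x}^{-\delta}$ (and similarly for the conjugation generators in C2(2)). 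Thus both entries of the C2 commutator become $S_Q$-conjugates of the same $S_K$-generator. Moving one conjugation across the commutator as before reduces C2 to a commutator whose surviving conjugator is a product of two $S_Q$-generators; applying $R_{\conj}$ twice and collapsing the resulting $S_K$-word with R1 leaves a relation that already holds in $R_K$.

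The main obstacle is purely the bookkeeping. One must check in each subcase that the chosen row of Table~\ref{table:action} has precisely the conjugator shape needed for the backward rewriting, with the signs $\epsilon,\delta,\zeta$ matched correctly, and that the inner conjugate $s^{-1}hs$ produced after the commutator-conjugation step is trivial (or collapses under R1) exactly because of the stated index-disjointness hypotheses. The extra difficulty in C2 is that the relevant product of two $S_Q$-generators is not itself a generator, so Table~\ref{table:action} must be invoked twice and the two outputs reassembled; verifying that they fit together into a word in $F(S_K)$ expressible by $R_K$ is where care is required. Since every reduction uses only Table~\ref{table:action} (that is, $R_{\conj}$) and the relations R1--R5 (that is, $R_K$), and never $R_Q$, the derivations take place using only $R_K$ and $R_{\conj}$, as claimed.
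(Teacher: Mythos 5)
Your C1 derivation is correct and is essentially the paper's own argument run in reverse: the paper starts from the relation $[\Mul{x_a^\delta}{y},\Mul{x_b^\zeta}{y}]=1$ in R1.1, conjugates it by $\Mul{x_a^\delta}{v}^{-\epsilon}$, and rewrites the two conjugated entries using $R_{\conj}$ --- which is exactly your reduction with the conjugation put on rather than stripped off. (One subcase slips through your net: in C1(3) with $z\in Y$, Table~\ref{table:action} has no row for $t=\Con{z}{y}$, and $R_{\conj}$ declares every $S_Q$-conjugate of such a generator equal to itself, so the first entry cannot be rewritten as an $S_Q$-conjugate at all; there one must first use R4 to replace $\Con{y}{v}^{\epsilon}\Con{z}{y}\Con{y}{v}^{-\epsilon}$ by $\Con{z}{v}^{-\epsilon}\Con{z}{y}\Con{z}{v}^{\epsilon}$ and then conjugate by the $S_K$-element $\Con{z}{v}^{\epsilon}$.)

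The C2 argument, however, has a genuine gap at its final step. Run your reduction for C2(1) explicitly: with $g=\Mul{x^\delta}{y}$, $s_1=\Mul{x^\delta}{z}^{-\epsilon}$ and $s_2=\Con{z}{x}^{-\delta}$, conjugating $[s_1gs_1^{-1},\,s_2gs_2^{-1}]=1$ by $s_1^{-1}$ leaves $[g,\,ugu^{-1}]=1$ with $u=\Mul{x^\delta}{z}^{\epsilon}\Con{z}{x}^{-\delta}$, and the two applications of $R_{\conj}$ give
\[
ugu^{-1}=G'\,H\,(G')^{-1},\qquad G'=\Con{y}{z}^{-\epsilon}\Mul{x^\delta}{y}\Con{y}{z}^{\epsilon},\quad H=\Con{z}{y}\Mul{x^\delta}{y}.
\]
Note the sign: $G'$ is the first entry of C2(1) with $\epsilon$ replaced by $-\epsilon$, not the entry you started from. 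The relation $[g,\,G'H(G')^{-1}]=1$ does not collapse under R1: the only usable R1 relation here is $[\Mul{x^\delta}{y},\Con{z}{y}]=1$ (R1.2 explicitly excludes $[\Mul{x^\delta}{y},\Con{y}{z}]$, since its hypothesis $v\neq y$ fails when the conjugated letter is $y$ itself), and nothing in R1 cancels the $\Con{y}{z}^{\mp\epsilon}$ letters. In fact, collapsing $G'H(G')^{-1}$ to $H$ is precisely relation C2(1) at sign $-\epsilon$, so your endgame assumes the statement being proved. The paper's derivation avoids this by never symmetrizing the two entries: it conjugates the bare R1.2 relation $[\Mul{x^\delta}{y},\Con{z}{y}]=1$ by $\Mul{x^\delta}{z}^{-\epsilon}$, so that $R_{\conj}$ turns the first entry into $G=\Con{y}{z}^{\epsilon}\Mul{x^\delta}{y}\Con{y}{z}^{-\epsilon}$ (the correct sign) and the second entry into $HG^{-1}$; the junk term is now exactly the inverse of the first entry, and the free-group identity $[G,HG^{-1}]=[G,H]$ finishes the derivation. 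Replacing your symmetric reduction by this asymmetric one (and its analogue for C2(2)) repairs the proof.
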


\begin{proof}
To derive C1.1, start with the relation $[\Mul{x_a^\delta}{y},\Mul{x_b^\zeta}{y}]=1$ from R1.1.
We conjugate this relation by $\Mul{x_a^\delta}{v}^{-\epsilon}$ and move the conjugating elements into the commutator.
The relation C1.1 follows by modifying the conjugates of commutator factors using relations from $R_{\conj}$.
The derivations for C1.2 and C1.3 are completely parallel.

To derive C2.1, start with the relation $[\Mul{x^\delta}{y},\Con{v}{y}]=1$ from R1.2.
We conjugate by $\Mul{x^\delta}{v}^{-\epsilon}$, move the conjugating elements into the commutator, and apply relations from $R_{\conj}$ to get 
\[\big[ \Con{y}{v}^\epsilon\Mul{x^\delta}{y}\Con{y}{v}^{-\epsilon},\ \Con{v}{y}\Mul{x^\delta}{y}\Con{y}{v}^\epsilon\Mul{x^\delta}{y}^{-1}\Con{y}{v}^{-\epsilon}\big]=1.\]
This is a product of the form $[g,hg^{-1}]=1$, so of course the corresponding expression $[g,h]=1$ follows, which is relation C2.1.
The derivation of C2.2 is parallel.
\end{proof}

\begin{proof}[Proof of Proposition~\ref{proposition:autfnklpres}]
Since all the relations in $R_K \cup R_Q \cup R_{\conj}$ are relations in $\AutFB{n}{k}{l}$, it
is enough to show that all the relations in Theorem \ref{theorem:jensenwahl} are consequences
of the relations $R_K \cup R_Q \cup R_{\conj}$.  In fact, almost all the relations in
Theorem \ref{theorem:jensenwahl} already lie in $R_K \cup R_Q \cup R_{\conj}$.  As the reader
can easily check, there are three exceptions.
\begin{itemize}
\item Relation Q4.1$'$ when $v \in Y$ and $z \in X \cup Z$.  Recall that this asserts that
\[\Con{v}{x}^{-\epsilon} \Mul{x^{\epsilon}}{z} \Con{v}{x}^{\epsilon} =  \Con{v}{z}\Mul{x^{\epsilon}}{z}.\]
Rearranging the terms, we get
\[\Mul{x^{\epsilon}}{z} \Con{v}{x}^{\epsilon} \Mul{x^{\epsilon}}{z}^{-1} = \Con{v}{x}^{\epsilon} \Con{v}{z},\]
which lies in $R_{\conj}$ (it is written in Table~\ref{table:action} as $\Mul{x^{\epsilon}}{z} \Con{v}{x} \Mul{x^{\epsilon}}{z}^{-1} = (\Con{v}{x}^{\epsilon} \Con{v}{z})^\epsilon$).

\item Relation Q4.2 when $v \in Z$ and $z \in Y$.
This asserts that
\[\Con{v}{z}^{\epsilon} \Mul{x^{\delta}}{v} \Con{v}{z}^{-\epsilon} = \Mul{x^{\delta}}{z}^{-\epsilon} \Mul{x^{\delta}}{v} \Mul{x^{\delta}}{z}^{\epsilon}.\]
Multiplying on the left by $\Mul{x^{\delta}}{v}^{-1}$ and applying two relations from $R_{\conj}$, we
see that we must prove that
\[([\Con{z}{v},\Mul{x^{\delta}}{z}^{-1}] \Con{v}{z})^{\epsilon} \Con{v}{z}^{-\epsilon} = \Con{z}{v} \Mul{x^{\delta}}{z}^{-\epsilon} \Con{z}{v}^{-1} \Mul{x^{\delta}}{z}^{\epsilon}.\]
For $\epsilon = 1$, this is trivially true.
For $\epsilon = -1$, it states that
\[\Con{v}{z}^{-1}\Mul{x^{\delta}}{z}^{-1}\Con{z}{v}\Mul{x^{\delta}}{z}\Con{z}{v}^{-1}\Con{v}{z} = \Con{z}{v} \Mul{x^{\delta}}{z} \Con{z}{v}^{-1} \Mul{x^{\delta}}{z}^{-1}.\]
By relation R1.2, we commute $\Mul{x^{\delta}}{z}^{-1}$ past $\Con{v}{z}^{-1}$ to get
\[(\Mul{x^{\delta}}{z}^{-1}\Con{v}{z}^{-1})(\Con{z}{v}\Mul{x^{\delta}}{z}\Con{z}{v}^{-1})\Con{v}{z} = \Con{z}{v} \Mul{x^{\delta}}{z} \Con{z}{v}^{-1} \Mul{x^{\delta}}{z}^{-1}.\]
By relation C2.1 from Lemma~\ref{lemma:newcommutators}, we can commute $\Mul{x^{\delta}}{z}^{-1}\Con{v}{z}^{-1}$ past $\Con{z}{v}\Mul{x^{\delta}}{z}\Con{z}{v}^{-1}$.
Then the resulting relation is obviously true.

\item Relation Q4.2$'$ when $v$ and $w$ are in $Z$ and $z$  is in $Y$.  
This asserts that
\[\Con{v}{z}^{\epsilon} \Con{w}{v} \Con{v}{z}^{-\epsilon} = \Con{w}{z}^{-\epsilon} \Con{w}{v} \Con{w}{z}^{\epsilon}.\]
Multiplying on the left by $\Con{w}{v}^{-1}$ and applying two relations from $R_{\conj}$, we see that
we must prove that
\[([\Con{z}{v},\Con{w}{z}^{-1}] \Con{v}{z})^{\epsilon} \Con{v}{z}^{-\epsilon}
=(\Con{z}{v} \Con{w}{z} \Con{z}{v}^{-1})^{-\epsilon} \Con{w}{z}^{\epsilon}.\]
For $\epsilon=1$, this is clearly true.  
For $\epsilon = -1$, this becomes
\[\Con{v}{z}^{-1}\Con{w}{z}^{-1}\Con{z}{v}\Con{w}{z}\Con{z}{v}^{-1}\Con{v}{z}
=\Con{z}{v} \Con{w}{z} \Con{z}{v}^{-1}\Con{w}{z}^{-1}.\]
Relation R1.3 allows us to commute $\Con{v}{z}^{-1}$ past $\Con{w}{z}^{-1}$, and the relation becomes
\[(\Con{w}{z}^{-1}\Con{v}{z}^{-1})(\Con{z}{v}\Con{w}{z}\Con{z}{v}^{-1})\Con{v}{z}
=\Con{z}{v} \Con{w}{z} \Con{z}{v}^{-1}\Con{w}{z}^{-1}.\]
Then by relation C2.2 from Lemma~\ref{lemma:newcommutators}, we can commute $\Con{w}{z}^{-1}\Con{v}{z}^{-1}$ past $\Con{z}{v}\Con{w}{z}\Con{z}{v}^{-1}$ and the resulting expression is trivially true.
\end{itemize}
\end{proof}

From Proposition \ref{proposition:autfnklpres}, one might be led to think that
$\BKer{n}{k}{l}$ is isomorphic to $\GroupPres{S_K}{R_K}$.  If it were, then the standard presentation
for a semidirect product would yield Proposition \ref{proposition:autfnklpres}.  
However, this is false (recall that we earlier proved Theorem \ref{maintheorem:notfinitepres}, which
asserts that $\BKer{n}{k}{l}$ is not
finitely presented).  The problem is that $\AutFB{n}{0}{l}$ does not act on $\GroupPres{S_K}{R_K}$; the
obvious candidate for an action from Proposition \ref{proposition:autfnklpres} does not preserve
$R_K$.

To fix this, we will have to add some additional relations.  Recall that
above (see \eqref{eqn:definef}) we defined a function $f \colon S_Q^{\pm 1} \times S_K \rightarrow F(S_K)$.
For fixed $s\in S_Q^{\pm1}$, we have $f(s,\cdot)\colon S_K\to F(S_K)$.
Since $F(S_K)$ is a free group, this extends to a homomorphism $f(s,\cdot)\colon F(S_K)\to F(S_K)$.
These maps assemble to a set map $f\colon S_Q^{\pm1}\to\End(F(S_K))$.
A tedious
but elementary calculation shows that for all $t \in S_Q$ and $s \in S_K$, we have
\begin{equation*}
f(t^{-1},f(t,s)) = s \quad \text{and} \quad f(t,f(t^{-1},s)) = s.
\end{equation*}
This lets us deduce two things.  First, $f(S_Q^{\pm 1}) \subset \Aut(F(S_K))$.  Second, $f(t^{-1})=f(t)^{-1}$.
This second fact lets us extend $f$ to a homomorphism $f\colon F(S_Q)\to \Aut(F(S_K))$.  In other words,
$f$ induces an action of $F(S_Q)$ on $F(S_K)$.
We will still write $f$ as a two-variable function $f \colon F(S_Q) \times F(S_K) \rightarrow F(S_K)$.
Finally, we construct our extended set of relations.
\[\hat{R}_K = \Set{$f(w,r)$}{$w \in F_Q$, $r \in R_K$} \subset F(S_K).\]
One should think of $f(w,r)$ here as the ``conjugate'' of the relation $r$ by $w$.
By construction, $F(S_Q)$ acts on $\GroupPres{S_K}{\hat{R}_K}$.  
We will prove the following proposition in \S \ref{section:action}.

\begin{proposition}
\label{proposition:action}
The action of $F(S_Q)$ on $\GroupPres{S_K}{\hat{R}_K}$ descends to an action
of $\AutFB{n}{0}{l}$ on $\GroupPres{S_K}{\hat{R}_K}$.
\end{proposition}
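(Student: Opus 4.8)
The plan is to show that the homomorphism $f \colon F(S_Q) \to \Aut(N)$, where I abbreviate $N = \GroupPres{S_K}{\hat{R}_K}$, kills the normal closure $\langle\langle R_Q\rangle\rangle$. Since $\ker(f)$ is a normal subgroup of $F(S_Q)$, it contains $\langle\langle R_Q\rangle\rangle$ as soon as it contains $R_Q$, and because $\AutFB{n}{0}{l} \cong \GroupPres{S_Q}{R_Q} = F(S_Q)/\langle\langle R_Q\rangle\rangle$, this containment is precisely what it means for the action to descend. So the whole proposition reduces to the following assertion: for every relator $r \in R_Q$, the automorphism $f(r) \in \Aut(N)$ is the identity. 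As $f(r)$ is an automorphism and $S_K$ generates $N$, it is enough to verify the single word identity $f(r,t) = t$ in $N$ for each $r \in R_Q$ and each generator $t \in S_K$.

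To make the computation of $f(r,t)$ manageable I would write $r = s_1^{\epsilon_1}\cdots s_m^{\epsilon_m}$ with $s_i \in S_Q$ and apply the automorphisms $f(s_i^{\epsilon_i})$ to $t$ one letter at a time, reading each step off Table~\ref{table:action}. The essential simplification is that most of these steps are trivial: whenever the current generator shares no indices with $s_i$, Table~\ref{table:action} records that $s_i^{\epsilon_i}$ fixes it, so only the few letters of $r$ that genuinely interact with $t$ contribute. After performing the interacting steps one is left with an explicit word in $F(S_K)$, and the task becomes to rewrite that word back to $t$ using the relations $\hat{R}_K$.

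I would then organize the verification by the type of relator in $R_Q$. Since $R_Q$ is exactly the $k=0$ case of the Jensen--Wahl presentation, these relators are the Nielsen relations N1--N5 together with the $Y$-free instances of Q2--Q5, and for each type (and each interacting $t$) one produces a derivation from $\hat{R}_K$. Here it is crucial that $\hat{R}_K$ is the \emph{entire} $F(S_Q)$-orbit of R1--R5 rather than R1--R5 themselves: the word $f(r,t)$ typically involves conjugated generators such as $\Con{y}{v}^{\epsilon}\Mul{x^{\delta}}{y}\Con{y}{v}^{-\epsilon}$, and the relations needed to collapse it are the correspondingly conjugated instances of the basic five. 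In practice I would first record the handful of derived commutation relations C1 and C2 of Lemma~\ref{lemma:newcommutators}, which are themselves consequences of $\hat{R}_K$, and use them as the workhorses that absorb the interacting factors.

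The main obstacle is conceptual rather than computational: the identity $f(r,t)=t$ must be proved inside the a priori larger group $N = \GroupPres{S_K}{\hat{R}_K}$, and not inside $\BKer{n}{k}{l}$. It is immediate that $f(r,t)=t$ holds in $\AutFB{n}{k}{l}$, since $r$ is one of the Jensen--Wahl relators $R$ and therefore $f(r,t)=rtr^{-1}=t$ there; hence $f(r,t)\,t^{-1}$ lies in the kernel of the natural surjection $N \twoheadrightarrow \BKer{n}{k}{l}$. But this surjection is not yet known to be injective — that injectivity is essentially the content of Theorem~\ref{maintheorem:relations}, which this proposition precedes — so one may not invoke relations valid in $\BKer{n}{k}{l}$ and must instead exhibit an explicit reduction of $f(r,t)\,t^{-1}$ to the identity using only $\hat{R}_K$. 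The resulting bookkeeping is lengthy, but thanks to the index-sharing reduction of the second paragraph and the commutator relations C1--C2, each individual case is elementary.
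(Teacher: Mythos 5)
Your proposal follows essentially the same route as the paper's own proof: reduce to checking that each relator in $R_Q$ fixes each generator of $S_K$ inside $\Gamma = \GroupPres{S_K}{\hat{R}_K}$ (not in $\BKer{n}{k}{l}$, exactly the subtlety you flag), prune the cases by support/index-sharing considerations, and discharge the surviving cases using the derived relations C1--C2 of Lemma~\ref{lemma:newcommutators} as consequences of $\hat{R}_K$. The paper simply executes the case-by-case computations (organized by relation type N1--N5, Q2--Q5, with a structural equivariance shortcut for N1, N2, Q3) that your outline defers.
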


We can now give a presentation for $\BKer{n}{k}{l}$.

\begin{theorem}
\label{theorem:relations}
We have $\BKer{n}{k}{l} \cong \GroupPres{S_K}{\hat{R}_K}$.
\end{theorem}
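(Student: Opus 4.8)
The plan is to realize $\GroupPres{S_K \cup S_Q}{R_K \cup R_Q \cup R_{\conj}}$ — which is $\AutFB{n}{k}{l}$ by Proposition~\ref{proposition:autfnklpres} — as a semidirect product whose distinguished normal factor is visibly $\GroupPres{S_K}{\hat{R}_K}$, and then to identify that factor with $\BKer{n}{k}{l}$ using the splitting \eqref{eqn:birmanaut}.

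First I would enlarge $R_K$ to $\hat{R}_K$ without changing the group. Writing $\NormalSpan{\cdot}$ for the normal closure of a subset of $F(S_K \cup S_Q)$, I claim that
\[\NormalSpan{R_K \cup R_Q \cup R_{\conj}} = \NormalSpan{\hat{R}_K \cup R_Q \cup R_{\conj}}.\]
The inclusion $\subseteq$ is immediate, since $f(1,r)=r$ gives $R_K \subseteq \hat{R}_K$. For $\supseteq$, the key computation is that modulo $\NormalSpan{R_{\conj}}$ one has $w\,r\,w^{-1} \equiv f(w,r)$ for every $w \in F(S_Q)$ and $r \in F(S_K)$: the relations $R_{\conj}$ say exactly $s^{\epsilon} t s^{-\epsilon} \equiv f(s^{\epsilon},t)$ on generators $s\in S_Q$, $t\in S_K$, and this propagates to arbitrary words because each $f(s^{\epsilon},\cdot)$ is a homomorphism of $F(S_K)$ and $f$ is an action of $F(S_Q)$. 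Taking $r \in R_K$, the word $f(w,r)$ is thus congruent modulo $\NormalSpan{R_{\conj}}$ to $w\,r\,w^{-1}\in\NormalSpan{R_K}$, so $\hat{R}_K \subseteq \NormalSpan{R_K \cup R_{\conj}}$. Hence $\AutFB{n}{k}{l} \cong \GroupPres{S_K \cup S_Q}{\hat{R}_K \cup R_Q \cup R_{\conj}}$.

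Next I would recognize the right-hand presentation as the standard presentation of a semidirect product. Set $G = \GroupPres{S_K}{\hat{R}_K}$ and recall that $\GroupPres{S_Q}{R_Q} \cong \AutFB{n}{0}{l}$. By Proposition~\ref{proposition:action}, $\AutFB{n}{0}{l}$ acts on $G$, and by construction the word $f(s,t)$ represents the image of $t \in S_K$ under $s \in S_Q$. The relations $R_{\conj}$ (the $\epsilon=+1$ ones; those for $\epsilon=-1$ are redundant) are then precisely the conjugation relations $s\,t\,s^{-1}=f(s,t)$ of the standard semidirect-product presentation, so
\[\GroupPres{S_K \cup S_Q}{\hat{R}_K \cup R_Q \cup R_{\conj}} \cong G \rtimes \AutFB{n}{0}{l}.\]
Finally I would extract the kernel. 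In this semidirect product the subgroup generated by $S_K$ is the normal factor $G$, and it is the kernel of the retraction onto $\GroupPres{S_Q}{R_Q}=\AutFB{n}{0}{l}$ obtained by setting every $t\in S_K$ equal to $1$. Under the isomorphism with $\AutFB{n}{k}{l}$ this retraction is precisely $\rho$ from \eqref{eqn:birmanaut}, since killing $S_K$ in the presentation of $\AutFB{n}{k}{l}$ yields the presentation of $\AutFB{n}{0}{l}$ and fixes $S_Q$. As $\BKer{n}{k}{l}=\Ker(\rho)$, I conclude $\BKer{n}{k}{l} \cong G = \GroupPres{S_K}{\hat{R}_K}$.

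The main obstacle is the first step: one must be certain that passing from $R_K$ to $\hat{R}_K$ is harmless, which rests on the congruence $w\,r\,w^{-1}\equiv f(w,r)$ modulo $R_{\conj}$ and hence on $f$ genuinely being an action of $F(S_Q)$ on $F(S_K)$ (established just before the statement), together with Proposition~\ref{proposition:action} guaranteeing that this action descends to $\AutFB{n}{0}{l}$; without the latter the semidirect product $G \rtimes \AutFB{n}{0}{l}$ would not even be well defined.
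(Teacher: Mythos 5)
Your proposal is correct and follows essentially the same route as the paper: both use Proposition~\ref{proposition:action} to form $\GroupPres{S_K}{\hat{R}_K} \rtimes \AutFB{n}{0}{l}$, observe that the relations in $\hat{R}_K \setminus R_K$ are redundant modulo $R_{\conj}$ so that Proposition~\ref{proposition:autfnklpres} identifies this semidirect product with $\AutFB{n}{k}{l}$, and then match the two split exact sequences to conclude. The only cosmetic difference is at the end, where the paper invokes the five lemma on the commutative diagram of exact sequences while you identify the retraction killing $S_K$ with $\rho$ directly; these are the same verification.
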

\begin{proof}
Set $\Gamma = \GroupPres{S_K}{\hat{R}_K}$.  Using Proposition \ref{proposition:action}, we can form the 
semidirect product of $\Gamma$ and $\AutFB{n}{0}{l}$.  This fits into the following commutative diagram
\[\begin{CD}
1 @>>> \Gamma @>>> \Gamma \rtimes \AutFB{n}{0}{l} @>>> \AutFB{n}{0}{l} @>>> 1\\
@.     @VVV        @VVV                             @VV{\cong}V          @.\\
1 @>>> \BKer{n}{k}{l} @>>> \AutFB{n}{k}{l} @>>> \AutFB{n}{0}{l} @>>> 1
\end{CD}\]
Now, $\Gamma \rtimes \AutFB{n}{0}{l}$ has the presentation $\GroupPres{S_K \cup S_Q}{\hat{R}_K \cup R_Q \cup R_{\conj}}$.
However, using the relations $R_{\conj}$ we see that the relations in $\hat{R}_K \setminus R_K$ are redundant.
Using Proposition \ref{proposition:autfnklpres}, we deduce that 
\[\Gamma \rtimes \AutFB{n}{0}{l} \cong \GroupPres{S_K \cup S_Q}{R_K \cup R_Q \cup R_{\conj}} \cong \AutFB{n}{k}{l}.\]
The five lemma then implies that $\Gamma \cong \BKer{n}{k}{l}$, as desired.
\end{proof}

In \cite{Bartholdi}, Bartholdi defined an \emph{$L$-presentation} to be an expression 
\[ \GroupLPres{S}{Q}{\Phi}{R}\]
where $S$ is a set of generators, $\Phi$ is a set of endomorphisms of $F(S)$, and $Q$ and $R$ 
are sets of relations in $F(S)$.
The group presented by the $L$-presentation above is $F(S)/N$, where $N \lhd F(S)$ is the normal
closure of the set
\[Q\cup\bigcup_{\phi\in\Phi^*}\phi(R).\]
Here $\Phi^*$ denotes the closure of $\Phi\cup\{\Id{F(S))}\}$ under composition.
An $L$-presentation is \emph{finite} if $S$, $Q$, $\Phi$ and $R$ are finite, \emph{ascending} if $Q=\varnothing$, and \emph{injective} if $\Phi$ consists of injective endomorphisms.

\begin{corollary}
\label{corollary:finiteLpresentation}
The group $\BKer{n}{k}{l}$ has the following finite $L$-presentation, which is also ascending and injective.
\[\BKer{n}{k}{l}\cong \GroupLPres{S_K}{\varnothing}{
\{f(t)\}_{t\in S_Q^{\pm1}}
}{R_K}.\]
\end{corollary}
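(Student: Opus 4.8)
The plan is to deduce this directly from Theorem~\ref{theorem:relations}, which already identifies $\BKer{n}{k}{l}$ with $\GroupPres{S_K}{\hat{R}_K}$; the only remaining task is to recognize the relator set $\hat{R}_K$ as the set produced by the $L$-presentation machinery. Recall that $\hat{R}_K = \Set{$f(w,r)$}{$w \in F(S_Q)$, $r \in R_K$}$, so $\GroupPres{S_K}{\hat{R}_K}$ is $F(S_K)/N$, where $N$ is the normal closure of $\bigcup_{w \in F(S_Q)} f(w)(R_K)$. On the other hand, the proposed $L$-presentation $\GroupLPres{S_K}{\varnothing}{\Phi}{R_K}$, with $\Phi = \{f(t)\}_{t \in S_Q^{\pm 1}}$, presents $F(S_K)/N'$ where $N'$ is the normal closure of $\bigcup_{\phi \in \Phi^*} \phi(R_K)$ (the component $Q$ being empty). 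Thus it suffices to prove $N = N'$, and for this it is enough to show that the two indexing families of endomorphisms coincide, namely $\Phi^* = \{f(w) : w \in F(S_Q)\}$.

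First I would establish this key identity. By definition $\Phi^*$ is the closure of $\Phi \cup \{\Id{F(S_K)}\}$ under composition, i.e. the submonoid of $\End(F(S_K))$ generated by the $f(t)$ for $t \in S_Q^{\pm 1}$. Since $S_Q^{\pm 1}$ is closed under inversion and the text established that $f(t^{-1}) = f(t)^{-1}$ (this is exactly what lets $f$ extend to a homomorphism $F(S_Q) \to \Aut(F(S_K))$), the set $\Phi$ already contains an inverse for each of its elements; hence the submonoid $\Phi^*$ is in fact a subgroup of $\Aut(F(S_K))$. Concretely, every element of $\Phi^*$ is a finite composite $f(t_1)\cdots f(t_m) = f(t_1 \cdots t_m) = f(w)$ for some $w \in F(S_Q)$, and conversely every $f(w)$ arises this way by expressing $w$ as a word in $S_Q^{\pm 1}$. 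This yields $\Phi^* = \{f(w) : w \in F(S_Q)\}$, whence $\bigcup_{\phi \in \Phi^*}\phi(R_K) = \hat{R}_K$ and therefore $N = N'$. Combined with Theorem~\ref{theorem:relations}, this gives the claimed isomorphism.

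Finally I would record that the presentation enjoys the three advertised properties. It is \emph{finite} because $S_K$ is finite by Theorem~\ref{maintheorem:generators}, $R_K$ is the finite list R1--R5, and $\Phi = \{f(t)\}_{t\in S_Q^{\pm1}}$ is finite since $S_Q$ is finite; it is \emph{ascending} because $Q = \varnothing$ by construction; and it is \emph{injective} because the text already showed $f(S_Q^{\pm 1}) \subset \Aut(F(S_K))$, so every endomorphism in $\Phi$ is an automorphism and in particular injective. There is no substantial obstacle in this argument: the single point requiring care is that the composition-closure $\Phi^*$ is genuinely a \emph{group} and not merely a monoid, which is precisely what the earlier identity $f(t^{-1}) = f(t)^{-1}$ guarantees.
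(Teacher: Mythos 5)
Your proposal is correct and follows the same route as the paper, which simply declares the corollary an immediate consequence of Theorem~\ref{theorem:relations} and the definition of $\hat{R}_K$; your identification $\Phi^* = \{f(w) : w \in F(S_Q)\}$ via $f(t^{-1}) = f(t)^{-1}$ is exactly the unpacking left implicit there. The verification of the finite, ascending, and injective properties is also as intended.
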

\begin{proof}
This is an immediate consequence of Theorem~\ref{theorem:relations} and the definition of $\hat{R}_K$.
\end{proof}

\subsection{Proof of Proposition \ref{proposition:action}}
\label{section:action}
In this subsection, we again use $\Gamma$ to denote $\GroupPres{S_K}{\hat{R}_K}$.
We prove here that $\AutFB{n}{0}{l}$ acts on~$\Gamma$.
Our proof uses the presentation of $\AutFB{n}{0}{l}$ directly; we would prefer a more conceptual proof but we were unable to find one.

\begin{lemma}
The relations C1.1-3 and C2.1-2 from Lemma~\ref{lemma:newcommutators} hold in $\Gamma$.
\end{lemma}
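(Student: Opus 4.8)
The plan is to exploit the defining feature of $\hat{R}_K$: by construction $\hat{R}_K = \{f(w,r) \mid w \in F(S_Q),\ r \in R_K\}$, so for every $w \in F(S_Q)$ and every $r \in R_K$ the word $f(w,r)$ is trivial in $\Gamma$. The key observation is that for fixed $w$ the map $f(w,\cdot) \co F(S_K) \to F(S_K)$ is a homomorphism (indeed an automorphism), so it carries a relator of the form $[g,h]$ to $[f(w,g),f(w,h)]$, and its values on the generators in $S_K$ are exactly the entries of Table~\ref{table:action}. The derivations of C1.1--3 and C2.1--2 in the proof of Lemma~\ref{lemma:newcommutators} consisted of conjugating one of the relators R1.1, R1.2, R1.3 $\in R_K$ by an element $s \in S_Q^{\pm 1}$ and rewriting each literal conjugate $s t s^{-1}$ of an $S_K$-generator $t$ via $R_{\conj}$, i.e.\ replacing it by $f(s,t)$. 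Since in $\Gamma$ we no longer have the $S_Q$-generators but instead have the relators $f(s^{\pm 1},r) \in \hat{R}_K$ at our disposal, the same computations go through with ``conjugate-and-simplify'' replaced by ``apply $f(s^{\pm 1},\cdot)$.''

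Concretely, for the C1 relations I would take C1.1 as the model. Let $r = [\Mul{x_a^\delta}{y},\Mul{x_b^\zeta}{y}]$, an instance of R1.1 and hence in $R_K$, and let $s = \Mul{x_a^\delta}{v}$. Then $f(s^{-\epsilon},r) = [\,f(s^{-\epsilon},\Mul{x_a^\delta}{y}),\, f(s^{-\epsilon},\Mul{x_b^\zeta}{y})\,]$. Since $x_a^\delta \neq x_b^\zeta$, the generators $s$ and $\Mul{x_b^\zeta}{y}$ commute (a Q2-type relation, reflected by the absence of a Table~\ref{table:action} entry), so the second factor is just $\Mul{x_b^\zeta}{y}$; reading the relevant row of Table~\ref{table:action} shows the first factor is precisely $\Con{y}{v}^\epsilon \Mul{x_a^\delta}{y} \Con{y}{v}^{-\epsilon}$. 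Thus C1.1 \emph{equals} $f(s^{-\epsilon},r)$ as an element of $F(S_K)$, and therefore holds in $\Gamma$. The relations C1.2 and C1.3 are handled identically, starting instead from the appropriate instances of R1.2 and R1.3 and again using that $s$ commutes with the unchanged commutator factor.

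For the C2 relations the same mechanism applies, with one extra elementary step. Starting from the R1.2 relator $r = [\Mul{x^\delta}{y},\Con{z}{y}]$ (for C2.1) or the R1.3 relator (for C2.2) and applying $f(s^{-\epsilon},\cdot)$ for the suitable $s$, Table~\ref{table:action} now produces a word of the form $[g, h g^{-1}]$ rather than $[g,h]$, because $f(s^{-\epsilon},\cdot)$ does not fix the second factor but multiplies it on the right by $g^{-1}$. This word is $f(s^{-\epsilon},r) \in \hat{R}_K$, hence trivial in $\Gamma$; and in any group $[g,hg^{-1}] = 1$ forces $[g,h] = 1$, since $g$ commutes with $hg^{-1}$ and trivially with $g$, hence with $(hg^{-1})g = h$. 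This yields exactly C2.1 (resp.\ C2.2).

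The only delicate point is purely bookkeeping: one must track the signs $\epsilon,\delta,\zeta$ and the index matching when reading off Table~\ref{table:action} to confirm that the computed value of $f(s^{-\epsilon},\cdot)$ on each generator is \emph{exactly} the word appearing in the target relation (in particular that the conjugating wrapper is $\Con{y}{v}^{\epsilon}(\cdot)\Con{y}{v}^{-\epsilon}$ with the correct sign). I expect this sign-chasing to be the main nuisance, but it is entirely mechanical and is already implicit in the parallel derivations of Lemma~\ref{lemma:newcommutators}; no new idea beyond the $[g,hg^{-1}]=1 \Rightarrow [g,h]=1$ reduction is needed.
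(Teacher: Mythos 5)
Your overall strategy is exactly the paper's: the paper's proof is the one-sentence observation that the derivations in Lemma~\ref{lemma:newcommutators} only invoke relations of the form $f(t,r)$ with $t\in S_Q^{\pm1}$ and $r\in R_K$, which lie in $\hat{R}_K$ and are therefore trivial in $\Gamma$. Your unpacking of this is faithful and your computations check out for C1.1, C1.2, C2.1 and C2.2 (signs included): those C1 relations are literally elements $f(s^{\mp\epsilon},r)\in\hat{R}_K$, and the C2 relations follow from such elements via the reduction $[g,hg^{-1}]=1\Rightarrow[g,h]=1$.

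There is, however, a gap in your claim that C1.3 is ``handled identically.'' C1.3 permits $z\in Y\cup Z$, and when $z\in Y$ (possible as soon as $k\geq 2$) the conjugator your recipe calls for, $\Con{z}{v}$, lies in $S_K$, not in $S_Q$. Worse, no element of $F(S_Q)$ can substitute for it: the generator $\Con{z}{y}$ with $z,y\in Y$ has no row in Table~\ref{table:action}, so $f(w,\cdot)$ fixes it for every $w\in F(S_Q)$, and consequently no relator $f(w,r)\in\hat{R}_K$ can have the wrapped word $\Con{y}{v}^{\epsilon}\Con{z}{y}\Con{y}{v}^{-\epsilon}$ as a commutator factor. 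This case must instead be derived inside $\Gamma$ from $R_K\subset\hat{R}_K$ directly: relation R4, instantiated with its $(v,w,z)$ equal to $(y,z,v)$, gives
\[
\Con{y}{v}^{\epsilon}\Con{z}{y}\Con{y}{v}^{-\epsilon}=\Con{z}{v}^{-\epsilon}\Con{z}{y}\Con{z}{v}^{\epsilon}
\quad\text{in }\Gamma,
\]
where all three letters $\Con{y}{v}$, $\Con{z}{y}$, $\Con{z}{v}$ lie in $S_K$ precisely because $z\in Y$; then R1.3 gives $[\Con{z}{y},\Con{w}{y}]=[\Con{z}{v},\Con{w}{y}]=1$, so the right-hand side above commutes with $\Con{w}{y}$, which is C1.3. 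With this extra case supplied, your proof is complete and coincides with the paper's argument.
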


\begin{proof}
Inspecting the proof of Lemma~\ref{lemma:newcommutators}, we see that all relations C1.1-3 and C2.1-2 follow easily from relations in \Set{$f(t,r)$}{$t\in S_Q^{\pm1}, r\in R_K$}, which is a subset of $\hat{R}_K$.
\end{proof}

For each generator $s\in S_Q\cup S_K$, we define two subsets, the \emph{support}, denoted $\supp(s)\subset (X\cup Y\cup Z)^{\pm1}$, and the \emph{multiplier set}, denoted $\mult{s}\subset X\cup Y\cup Z$.
We define these as follows:
\begin{itemize}
\item $\supp(I_a)=\{x_a,x_a^{-1}\}$ and $\mult(I_a)=\{x_a\}$,
\item $\supp(P_{a,b})=\{x_a,x_a^{-1},x_b,x_b^{-1}\}$ and $\mult(P_{a,b})=\{x_a,x_b\}$,
\item $\supp(\Mul{x^\epsilon}{v})=\{x^\epsilon\}$ and $\mult(\Mul{x^\epsilon}{v})=\{v\}$, and
\item $\supp(\Con{w}{v})=\{w,w^{-1}\}$ and $\mult(\Con{w}{v})=\{v\}$.
\end{itemize}
For $s$ with $s^{-1}\in S_K\cup S_Q$, define $\supp(s)=\supp(s^{-1})$ and $\mult(s)=\mult(s^{-1})$.
For $g\in F(S_K\cup S_Q)$ with $g=s_1\dotsm s_m$ (as reduced words) for $s_1,\dotsc,s_m\in (S_K\cup S_Q)^{\pm1}$, define $\supp(g)=\bigcup_{i=1}^m\supp(s_i)$ and $\mult(g)=\bigcup_{i=1}^m\mult(s_i)$.

The following two lemmas can easily be proven using the definition of $f$ and induction on word length.
The proofs are left to the reader.

\begin{lemma}\label{lemma:disjointcommute}
Suppose $s\in F(S_K)$ and $t\in F(S_Q)$.
If $\supp(s)\cap\supp(t)=\varnothing$, $\supp(s)\cap\mult(t)^{\pm1}=\varnothing$ and $\supp(t)\cap\mult(s)^{\pm1}=\varnothing$,
then $f(t,s)=s$ as elements of $F(S_K)$.
\end{lemma}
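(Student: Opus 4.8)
The plan is to prove this by a double induction on the word lengths of $t \in F(S_Q)$ and $s \in F(S_K)$, using two structural facts from the surrounding text. First, each $f(t,\cdot)$ is an automorphism of $F(S_K)$ and $f$ is an action of $F(S_Q)$ on $F(S_K)$, so that $f(t_1 t_2, s) = f(t_1, f(t_2, s))$ while $f(u, s_1 s_2) = f(u, s_1)\, f(u, s_2)$. Second, $\supp$ and $\mult$ are defined as unions over the letters of a reduced word, so if $s = v s'$ or $t = u t'$ is a reduced factorization then $\supp(s) = \supp(v) \cup \supp(s')$, and similarly for $\mult$ and for $t$. Consequently all three disjointness hypotheses for the pair $(t,s)$ are inherited by any pair built from subwords of $t$ and $s$, which is exactly what makes the induction go through.

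The base case is a single pair $(u,v)$ with $u \in S_Q^{\pm 1}$ and $v \in S_K^{\pm 1}$. Since $\supp$ and $\mult$ are by convention invariant under inversion, the hypotheses do not see the signs of $u$ and $v$; moreover $f(u,\cdot)$ is a homomorphism, so $f(u,v^{-1}) = f(u,v)^{-1}$. It therefore suffices to treat $v \in S_K$, in which case $f(u,v) = u v u^{-1}$ is read directly off Table~\ref{table:action}. The content of the base case is the assertion that when all three disjointness conditions hold, one has $u v u^{-1} = v$. I would establish this by inspecting the finitely many rows of Table~\ref{table:action}: for every row whose listed value of $s t s^{-1}$ differs from $t$, at least one of the three intersections in the hypotheses turns out to be nonempty, so at least one hypothesis fails; and any pair $(u,v)$ \emph{absent} from the table already satisfies $u v u^{-1} = v$ by the table's caption convention. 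I expect this (routine but finite) table verification to be the main obstacle, and the point demanding the most care is that the roles of the symbols $s$ and $t$ are \emph{interchanged} between Table~\ref{table:action}, where $s \in S_Q^{\pm1}$ and $t \in S_K$, and the present lemma, where $s \in F(S_K)$ and $t \in F(S_Q)$.

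With the base case in hand, I would first fix $u \in S_Q^{\pm1}$ and induct on the length of $s$: writing $s = v s'$ with $v \in S_K^{\pm1}$ and $s'$ shorter, the inherited hypotheses give $f(u,v) = v$ (base case) and, by the inductive hypothesis, $f(u,s') = s'$, so $f(u,s) = f(u,v)\, f(u,s') = v s' = s$ in $F(S_K)$. (No reduction issues arise, since we only assert an equality of group elements, not of reduced words.) I would then induct on the length of $t$: writing $t = u t'$ with $u \in S_Q^{\pm1}$ and $t'$ shorter, the inherited hypotheses together with the inductive hypothesis give $f(t',s) = s$, whence $f(t,s) = f(u, f(t',s)) = f(u,s) = s$ by the previous step. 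This completes the double induction and hence the proof.
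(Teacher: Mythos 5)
Your proof is correct and is precisely the argument the paper intends: the paper gives no details, saying only that the lemma ``can easily be proven using the definition of $f$ and induction on word length,'' and your double induction---reducing via the homomorphism property of each $f(u,\cdot)$ and the action property $f(ut',s)=f(u,f(t',s))$ to a base case checked row-by-row against Table~\ref{table:action} together with its absent-entry convention---supplies exactly those details. The one genuinely delicate point, the interchange of the symbols $s$ and $t$ between the table and the lemma statement, is one you correctly identify and handle.
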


\begin{lemma}
\label{lemma:actionsupport}
Suppose $s\in F(S_K)$ and $t\in F(S_Q)$.
Then $\supp(f(t,s))\subset \supp(s)\cup\supp(t)\cup\mult(s)^{\pm1}$ and $\mult(f(t,s))\subset\mult(s)\cup\mult(t)$.
\end{lemma}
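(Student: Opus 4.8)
The plan is to prove both containments by induction on the word length of $t$ as a word in $S_Q^{\pm1}$, after first isolating the single–generator behavior recorded in Table~\ref{table:action}. Since $\supp$ and $\mult$ are defined as unions over the letters of a reduced word, and since reducing a word only deletes letters, both are subadditive, $\supp(gh)\subset\supp(g)\cup\supp(h)$ and $\mult(gh)\subset\mult(g)\cup\mult(h)$, and both are invariant under inversion. Because each $f(r,\cdot)$ with $r\in S_Q^{\pm1}$ is a homomorphism of $F(S_K)$, this reduces every estimate to the case where the second argument is a single generator $u\in S_K$, the case $u^{-1}$ following from $f(r,u^{-1})=f(r,u)^{-1}$ together with $\supp(u^{-1})=\supp(u)$ and $\mult(u^{-1})=\mult(u)$.

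First I would record the base facts by inspecting each entry of Table~\ref{table:action}: for every $r\in S_Q^{\pm1}$ and $u\in S_K$ one reads off
\[
\mult(f(r,u))\subset\mult(u)\cup\mult(r).
\]
For the support, the naive bound $\supp(f(r,u))\subset\supp(u)\cup\supp(r)\cup\mult(u)^{\pm1}$ holds but is too weak to survive the induction. The crucial sharpening, also visible directly from the table, is that \emph{the only multipliers of $u$ that ever enter the support of $f(r,u)$ are those lying in $Y$}:
\[
\supp(f(r,u))\subset\supp(u)\cup\supp(r)\cup(\mult(u)\cap Y)^{\pm1}.
\]
Concretely, in each row the support appearing beyond $\supp(u)\cup\supp(r)$ comes from a newly introduced conjugation move $\Con{y}{\cdot}$ whose conjugator is a $Y$–letter equal to a multiplier of $u$; in the rows with $u=\Con{y}{x_a}$ or $u=\Con{y}{z_i}$, where $\mult(u)\subset X\cup Z$, no new support is created at all.

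With these base facts I would prove by induction on $|t|$ the \emph{strengthened} statement that for all $s\in F(S_K)$,
\[
\supp(f(t,s))\subset\supp(s)\cup\supp(t)\cup(\mult(s)\cap Y)^{\pm1}
\quad\text{and}\quad
\mult(f(t,s))\subset\mult(s)\cup\mult(t).
\]
The case $|t|\le1$ is the base facts pushed through the homomorphism $f(r,\cdot)$ and the subadditivity of $\supp,\mult$ over the letters of $s$. For the inductive step write $t=rt'$ with $r\in S_Q^{\pm1}$, so that $f(t,s)=f(r,w)$ with $w=f(t',s)$; then apply the $|t|=1$ case to the pair $(r,w)$ and the inductive hypothesis to the pair $(t',s)$. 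Everything assembles as unions except for the term $(\mult(w)\cap Y)^{\pm1}$, and here the argument turns on the observation that \emph{no element of $S_Q^{\pm1}$ has a multiplier in $Y$}: every multiplier of an $S_Q$–generator lies in $X\cup Z$, so $\mult(t')\cap Y=\varnothing$ and hence $\mult(w)\cap Y\subset(\mult(s)\cup\mult(t'))\cap Y=\mult(s)\cap Y$. This is exactly what prevents the spurious accumulation of $\mult(t')^{\pm1}$ in the support, and it lets the support containment close up to $\supp(s)\cup\supp(t)\cup(\mult(s)\cap Y)^{\pm1}$; the multiplier containment closes trivially. Since $(\mult(s)\cap Y)^{\pm1}\subset\mult(s)^{\pm1}$, the strengthened bound implies the one in the statement.

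The main obstacle is precisely this bookkeeping. With the obvious induction, multipliers of the second argument leak into the support and, after repeated conjugation, pollute the bound with $\mult(t)^{\pm1}$, which the claimed inequality does not permit. Recognizing that only $Y$–multipliers ever reach the support — together with the fact that the $S_Q$ side contributes no $Y$–multipliers — is the one genuinely load-bearing point; the remainder is the routine subadditivity-and-induction packaging that the authors allude to, together with the finite verification of the two base facts against Table~\ref{table:action}.
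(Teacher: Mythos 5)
Your proof is correct and follows exactly the route the paper prescribes---induction on the word length of $t$ using the definition of $f$ from Table~\ref{table:action}---the paper itself leaving the details to the reader. Your strengthened inductive hypothesis, that only the $Y$-multipliers of $s$ ever enter the support, combined with the observation that no generator in $S_Q$ has a multiplier in $Y$, is precisely the bookkeeping needed to make the induction close: with the naive hypothesis the term $\mult(f(t',s))^{\pm 1}$ would let $\mult(t)^{\pm 1}$ leak into the support bound, which the statement of the lemma does not permit.
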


\begin{proof}[Proof of Proposition~\ref{proposition:action}]
Recall that the goal is to show that $f$ defines an action of $\AutFB{n}{0}{l}\cong \GroupPres{S_Q}{R_Q}$ on $\Gamma$.
From the discussion preceding the statement of Proposition~\ref{proposition:action}, we know that $f$ defines a homomorphism $F(S_Q)\to \Aut(\Gamma)$.
To prove the proposition, 
it is enough to show that each relation in $R_Q$ fixes each generator in $S_K$ as an element of $\Gamma$ under this action.
We do this in several cases, depending on which relations in $R_Q$ we are considering.

\BeginCases
\begin{case}
Relations from Q3, N1 and N2.
\end{case}

In the action of $F(S_Q)$ on $F(S_K)$, these relations fix each element of $S_K$.
This can be verified by direct computation, but it is also easy to see for structural reasons.
The subgroup $\tilde P$ of $F(S_Q)$ generated by swaps and inversions acts on $F(S_Q)$ and $F(S_K)$ directly through the action of a signed permutation group $P$, and on $\Aut(F(S_K))$ diagonally.
This implies that the relations N1 act trivially on $S_K$, because these relations hold in $P$.
The action homomorphism $f\colon F(S_Q)\to\Aut(F(S_K))$ is equivariant with respect to these actions because the definitions in Table~\ref{table:action} are uniform with respect to the subscripts of the generators.
The map $f$ sends the conjugation action of $\tilde P$ on $F(S_Q)$ to the diagonal action on $\Aut(F(S_K))$.
Then all the relations from N2 and Q3 act trivially on $F(S_K)$ because of the equivariance of $f\colon F(S_Q)\to\Aut(F(S_K))$.

\begin{case}\label{case:commute}
Relations from Q2 and N4.
\end{case}
These are exactly the relations in $R_Q$ that state that certain pairs of generators $t_1$, $t_2$ from $S_Q$ commute, where each $t_i$ is a Nielsen move or a conjugation move.
Instead of enumerating the numerous subcases for this case, we treat many cases simultaneously by analyzing the supports and multiplier sets of the generators.

In each case, we can write our relation as $t_1t_2=t_2t_1$ for some $t_1,t_2\in S_Q$.
By inspecting relations Q2 and N4, we see that this implies 
\begin{equation}
\label{equation:commutingrelationsupport}
\supp(t_1)\cap\supp(t_2)=\supp(t_1)\cap\mult(t_2)^{\pm1}=\supp(t_2)\cap\mult(t_1)^{\pm1}=\varnothing.
\end{equation}
By definition, we have $\supp(t_i)\cup\mult(t_i)\subset (X\cup Z)^{\pm1}$ for $i=1,2$.
Suppose $s\in S_K$.
If $\supp(s)\cup\mult(s)\subset Y^{\pm1}$, then both $t_1t_2$ and $t_2t_1$ fix $s$ by Lemma~\ref{lemma:disjointcommute}.
So we suppose this is not the case; however, we must have $\supp(s)\subset Y^{\pm1}$ or $\mult(s)\subset Y$.

If $\supp(s)\subset Y^{\pm1}$, this means $s=\Con{y}{v}$ for some $y\in Y$ and $v\in X\cup Z$.
In particular, this means $(\mult(t_i)^{\pm1}\cup\supp(t_i))\cap\supp(s)=\varnothing$ for $i=1,2$.
If the relation acts nontrivially on $s$, then at least one $t_i$, say $t_1$, satisfies $f(t_1,s)\neq s$.
Then by Lemma~\ref{lemma:disjointcommute}, we have that $\supp(t_1)\subset\{v,v^{-1}\}$.
From the definition of $f$, we know $\supp(f(t_1,s))=\{y,y^{-1}\}$ and $\mult(f(t_1,s))\subset \{v\}\cup\mult(t_1)$.
From Equation~\eqref{equation:commutingrelationsupport}, we know $\supp(t_2)\cap\mult(t_1)^{\pm1}=\varnothing$.
Then since $\mult(t_2)^{\pm1}\cap\supp(f(t_1,s))=\varnothing$, Lemma~\ref{lemma:disjointcommute} implies that either $\supp(t_2)\subset\{v,v^{-1}\}$ or else 
$f(t_2,s)= s$ and $f(t_2t_1,s)= f(t_1,s)$.
Since the latter case implies that $f(t_1t_2,s)= f(t_2t_1,s)$,
we assume that $t_i\subset\{v,v^{-1}\}$ for $i=1,2$.
By Equation~\eqref{equation:commutingrelationsupport}, the only way this is possible is if $s=\Con{y}{x_a}$, $t_1=\Mul{x_a^\epsilon}{y}$ and $t_2=\Mul{x_a^{-\epsilon}}{y}$ for some $x_a^\epsilon\in X^{\pm1}$.
Direct computation shows that $f(t_2t_1,s)=f(t_1t_2,s)$ in $F(S_K)$ in this case.

Now suppose that $\mult(s)\subset Y$.
First we suppose that $\supp(s)\cap\supp(t_1)\neq\varnothing$; in fact, this implies that $\supp(s)=\supp(t_1)$.
Then by Lemma~\ref{lemma:actionsupport}, $\supp(f(t_1,s))\subset \supp(s)\cup\mult(s)^{\pm1}$ and $\mult(f(t_1,s))\subset\mult(t_1)\cup\mult(s)$.
Then $(\supp(t_2)\cup\mult(t_2)^{\pm1})\cap\supp(g)=\varnothing$ and $(\supp(g)\cup\mult(g)^{\pm1})\cap\supp(t_2)=\varnothing$ for $g=s$ and for $g=f(t_1,s)$
(this uses Equation~\eqref{equation:commutingrelationsupport}).
Then $f(t_1t_2,s)=f(t_2t_1,s)$ by Lemma~\ref{lemma:disjointcommute}.
Swapping the roles of $t_1$ and $t_2$, we can now suppose that $\supp(s)\cap\supp(t_i)=\varnothing$ for $i=1,2$.

Still supposing that $\mult(s)\subset Y$, we suppose (as above) that $f(t_1,s)\neq s$.
Then this implies that $\supp(s)\subset\mult(t_1)^{\pm1}$, $\supp(f(t_1,s))\subset \supp(t_1)\cup\supp(s)\cup Y^{\pm1}$
and $\mult(f(t_1,s))\subset\mult(t_1)\cup Y$.
If $\supp(s)\cap\mult(t_2)^{\pm1}=\varnothing$, 
Then $(\supp(t_2)\cup\mult(t_2)^{\pm1})\cap\supp(g)=\varnothing$ and $(\supp(g)\cup\mult(g)^{\pm1})\cap\supp(t_2)=\varnothing$ for $g=s$ and for $g=f(t_1,s)$
implying that $f(t_1t_2,s)=f(t_2t_2,s)$.
So we also assume that $\supp(s)\subset\mult(t_2)^{\pm1}$.
Then there are only nine remaining cases; these are given in Table~\ref{table:commutecheck}.

{\renewcommand{\arraystretch}{1.2}
\begin{table}[h!tp]
\begin{center}
\begin{tabular}{l@{\hspace{28pt}}l@{\hspace{28pt}}l@{\hspace{28pt}}l}
\toprule
$\boldsymbol{s \in S_K}$ & $\boldsymbol{t_1 \in S_Q^{\pm1}}$ & $\boldsymbol{t_2 \in S_Q^{\pm1}}$ & $\boldsymbol{f(t_2t_1,s)^{-1}f(t_1t_2,s)}$ \\
\midrule
$\Mul{x_a}{y}$          & $\Mul{x_b^\epsilon}{x_a}$ & $\Mul{x_c^\delta}{x_a}$ & $\Con{y}{x_a}^{-1}\big[\Mul{x_b^\epsilon}{y},\Mul{x_c^\delta}{y}\big]\Con{y}{x_a}$ \\
                        & $\Mul{x_b^\epsilon}{x_a}$ & $\Con{z_i}{x_a}$       & $\Con{y}{x_a}^{-1}\big[\Mul{x_b^\epsilon}{y},\Con{z_i}{y}\big]\Con{y}{x_a}$ \\
                        & $\Con{z_i}{x_a}$        & $\Con{z_j}{x_a}$       & $\Con{y}{x_a}^{-1}\big[\Con{z_i}{y},\Con{z_j}{y}\big]\Con{y}{x_a}$ \\
\midrule
$\Mul{x_a^{-1}}{y}$     & $\Mul{x_b^\epsilon}{x_a}$ & $\Mul{x_c^\delta}{x_a}$ & $\big[\Mul{x_b^\epsilon}{y}^{-1},\Mul{x_c^\delta}{y}^{-1}\big]$ \\
                        & $\Mul{x_b^\epsilon}{x_a}$ & $\Con{z_i}{x_a}$       & $\big[\Mul{x_b^\epsilon}{y}^{-1},\Con{z_i}{y}^{-1}\big]$ \\
                        & $\Con{z_i}{x_a}$        & $\Con{z_j}{x_a}$       & $\big[\Con{z_i}{y}^{-1},\Con{z_j}{y}^{-1}\big]$ \\
\midrule
$\Con{z_i}{y}$          & $\Mul{x_a^\epsilon}{z_i}^{-1}$      & $\Mul{x_b^\epsilon}{z_i}^{-1}$       & $\Big[\big[\Con{y}{z_i},\Mul{x_a^\epsilon}{y}\big],\big[\Con{y}{z_i},\Mul{x_b^\epsilon}{y}\big]\Big]$ \\
                        & $\Mul{x_a^\epsilon}{z_i}^{-1}$      & $\Con{z_j}{z_i}^{-1}$              & $\Big[\big[\Con{y}{z_i},\Mul{x_a^\epsilon}{y}\big],\big[\Con{y}{z_i},\Con{z_j}{y}\big]\Big]$\\
                        & $\Con{z_j}{z_i}^{-1}$               & $\Con{z_m}{z_i}^{-1}$              & $\Big[\big[\Con{y}{z_i},\Con{z_j}{y}\big],\big[\Con{y}{z_i},\Con{z_m}{y}\big]\Big]$ \\
\bottomrule
\end{tabular}
\caption{
This table includes the computations relevant to Case~\ref{case:commute} of Proposition~\ref{proposition:action}.
In each row, we compute the difference of the actions of the two sides of the relation $t_1t_2=t_2t_1$ on $s$, as an element of $F(S_K)$. 
This includes the only cases where $\mult(s)\in Y$ and inspection of the supports of the generators does not immediately imply that the relation fixes $s$.
}
\label{table:commutecheck}
\end{center}
\end{table}
}

To finish this case, we must show that the expressions for $f(t_2t_1,s)^{-1}f(t_1t_2,s)$ in Table~\ref{table:commutecheck} are always trivial in $\Gamma$.
The entries for $s=\Mul{x_a^\pm1}{y}$ are trivial by relations R1.1-3;
those for $s=\Con{z_i}{y}$ are trickier.
We consider the terms from one of the entries:
\[[\Con{y}{z_i},\Mul{x_a^\epsilon}{y}][\Con{y}{z_i},\Mul{x_b^\delta}{y}]=(\Con{y}{z_i}\Mul{x_a^\epsilon}{y}\Con{y}{z_i}^{-1})\Mul{x_a^\epsilon}{y}^{-1}
(\Con{y}{z_i},\Mul{x_b^\delta}{y}\Con{y}{z_i}^{-1})\Mul{x_b^\delta}{y}^{-1}.\]
By relations from C1.1, the elements $(\Con{y}{z_i}\Mul{x_a^\epsilon}{y}\Con{y}{z_i}^{-1})$ and $\Mul{x_b^\delta}{y}^{-1}$ commute, and the elements $(\Con{y}{z_i},\Mul{x_b^\delta}{y}\Con{y}{z_i}^{-1})$ and $\Mul{x_a^\epsilon}{y}^{-1}$ commute.
By relations from R1.1, the elements $\Mul{x_a^\epsilon}{y}^{-1}$ and  $\Mul{x_b^\delta}{y}^{-1}$ commute, and the elements $(\Con{y}{z_i}\Mul{x_a^\epsilon}{y}\Con{y}{z_i}^{-1})$ and $(\Con{y}{z_i},\Mul{x_b^\delta}{y}\Con{y}{z_i}^{-1})$ commute.
Working in $\Gamma$, this allows us to show:
\[[\Con{y}{z_i},\Mul{x_a^\epsilon}{y}][\Con{y}{z_i},\Mul{x_b^\delta}{y}]=[\Con{y}{z_i},\Mul{x_b^\delta}{y}][\Con{y}{z_i},\Mul{x_a^\epsilon}{y}],\]
meaning that the seventh expression from the last column of the table is trivial in $\Gamma$.
The last two expressions are completely analogous, using R1.2 and C1.2, and R1.3 and C1.3 instead of R1.1 and C1.1, respectively.
\begin{case}\label{case:N3}
Relations from N3.
\end{case}
One of the relations from N3 is of the form $\Mul{x_a^{-1}}{x_b}^{-1}\Mul{x_b^{-1}}{x_a}\Mul{x_a}{x_b}=I_aP_{a,b}$, for $1\leq a,b\leq n$ and $a\neq b$.
We rearrange this as 
\begin{equation*}
\Mul{x_b^{-1}}{x_a}\Mul{x_a}{x_b}=\Mul{x_a^{-1}}{x_b}I_aP_{a,b}.
\end{equation*}
For the rest of this case, let $g=\Mul{x_b^{-1}}{x_a}\Mul{x_a}{x_b}$ and let $h=\Mul{x_a^{-1}}{x_b}I_aP_{a,b}$, so that the relation in question is $g=h$.
Support considerations imply that all elements of $S_K$ are fixed in $F(S_K)$ by this the action of relation except possibly for those of the form $\Mul{x_a}{y}$, $\Mul{x_b}{y}$, $\Con{y}{x_a}$ or $\Con{y}{x_b}$ for $y\in Y$.
Direct computation shows that $f(g,\Con{y}{x_a})=f(h,\Con{y}{x_a})$ and $f(g,\Con{y}{x_b})=f(h,\Con{y}{x_b})$, as elements of $F(S_K)$;
only two possibilities for $s$ are left.
In the case $s=\Mul{x_a}{y}$, we derive $f(g,s)=f(h,s)$ in $\Gamma$:
\begin{align*}
f(g,s)&=\Con{y}{x_b}^{-1}(\Con{y}{x_a}\Mul{x_a}{y}\Con{y}{x_a}^{-1})\Mul{x_b^{-1}}{y}^{-1}\Con{y}{x_b}\quad&&\text{(computation in $F(S_K)$)}\\
&=\Con{y}{x_b}^{-1}\Mul{x_a^{-1}}{y}^{-1}\Mul{x_b^{-1}}{y}^{-1}\Con{y}{x_b} && \text{(relation R5)}\\
&=(\Con{y}{x_b}^{-1}\Mul{x_b^{-1}}{y}^{-1})\Mul{x_a^{-1}}{y}^{-1}\Con{y}{x_b} && \text{(relation R1.1)}\\
&=(\Mul{x_b}{y}\Con{y}{x_b}^{-1})\Mul{x_a^{-1}}{y}^{-1}\Con{y}{x_b} && \text{(relation R5)}\\
&=f(h,s)&&\text{(computation in $F(S_K)$).}
\end{align*}
If $s=\Mul{x_b}{y}$, we compute:
\begin{align*}
f(g,s)&=(\Mul{x_b}{y}\Con{y}{x_b}^{-1}\Mul{x_b^{-1}}{y})\Con{y}{x_a}\Mul{x_a}{y}^{-1}\Con{y}{x_a}^{-1}\Con{y}{x_b} \quad&&\text{(computation in $F(S_K)$)}\\
&=\Con{y}{x_b}^{-1}(\Con{y}{x_a}\Mul{x_a}{y}^{-1}\Con{y}{x_a}^{-1})\Con{y}{x_b} &&\text{(relation R5)}\\
&=\Con{y}{x_b}^{-1}\Mul{x_a^{-1}}{y}\Con{y}{x_b} &&\text{(relation R5)}\\
&=f(h,s) &&\text{(computation in $F(S_K)$).}
\end{align*}
The derivations for the other type of relation in N3 are similar.

\begin{case}\label{case:steinberg}
Relations from N5 and Q4.1.
\end{case}
Any relation in $R_Q$ from N5 or Q4.1 can be written as
\[\Mul{x_c^\delta}{x_b}^\epsilon\Mul{x_b^\epsilon}{v}\Mul{x_c^\delta}{v}=\Mul{x_b^\epsilon}{v}\Mul{x_c^\delta}{x_b}^\epsilon\]
for some $\epsilon,\delta=\pm1$, some $v\in X\cup Z$ and distinct $1\leq b,c\leq n$ with $v\neq x_b,x_c$.
Actually, to write a relation from Q4.1 in this form, we must modify it with a relation from Q1.1; however, since we have already verified that those relations act trivially on $\Gamma$, this makes no difference.
For the rest of this case, let $g_1=\Mul{x_b^\epsilon}{v}\Mul{x_c^\delta}{v}$,  $g_2=\Mul{x_c^\delta}{x_b}^\epsilon$ and $h=\Mul{x_b^\epsilon}{v}\Mul{x_c^\delta}{x_b}^\epsilon$, so that the relation reads $g_2g_1=h$.
Lemma~\ref{lemma:actionsupport} implies that this relation fixes all elements of $S_K$ with the following possible exceptions: $\Mul{v}{y}$, $\Mul{v^{-1}}{y}$ (if $v\in x$), $\Con{v}{y}$ (if $v\in Z$), $\Mul{x_b^\epsilon}{y}$,  $\Mul{x_b^{-\epsilon}}{y}$, $\Mul{x_c^\delta}{y}$,  $\Con{y}{x_b}$ and $\Con{y}{x_c}$.
Further computation shows that $f(g_2g_1,s)=f(h,s)$ in $F(S_K)$ for $s$ equal to $\Mul{v^{-1}}{y}$, $\Mul{x_b^\epsilon}{y}$, $\Mul{x_b^{-\epsilon}}{y}$, $\Mul{x_c^\delta}{y}$, $\Con{y}{x_b}$ or $\Con{y}{x_c}$.
We now check the remaining two possibilities for $s$.
If $v\in X$ and $s=\Mul{v}{y}$, we derive $f(g_2g_1,s)=f(h,s)$ in~$\Gamma$:
\begin{align*}
f(g_2g_1,s)&=f(g_2,\Mul{v}{y}\Con{y}{v}^{-1}\Mul{x_b^\epsilon}{y}^{-1}\Mul{x_c^\delta}{y}^{-1}\Con{y}{v})\quad &&\text{(computation in $F(S_K)$)}\\
&=f(g_2,\Mul{v}{y}\Con{y}{v}^{-1}\Mul{x_c^\delta}{y}^{-1}\Mul{x_b^\epsilon}{y}^{-1}\Con{y}{v}) &&\text{(relation R1.1)}\\
&=\Mul{v}{y}\Con{y}{v}^{-1}\Mul{x_b^\epsilon}{y}^{-1}\Con{y}{v} &&\text{(computation in $F(S_K)$)}\\
&=f(h,s) &&\text{(computation in $F(S_K)$).}
\end{align*}
If $v\in Z$ and $s=\Con{v}{y}$:
\begin{align*}
f(g_2g_1,s)&=f(g_2,\Con{v}{y}\big[\Mul{x_b^\epsilon}{y},\Con{y}{v}^{-1}\big]\big[\Mul{x_c^\delta}{y},\Con{y}{v}^{-1}\big])\quad &&\text{(computation in $F(S_K)$)}\\
&=f(g_2,\Con{v}{y}\big[\Mul{x_b^\epsilon}{y}\Mul{x_c^\delta}{y},\ \Con{y}{v}^{-1}\big]) &&\text{(relations R1.1 and C1.1)}\\
&=\Con{v}{y}\big[\Mul{x_b^\epsilon}{y},\Con{y}{v}^{-1}\big] &&\text{(computation in $F(S_K)$)}\\
&=f(h,s) &&\text{(computation in $F(S_K)$).}
\end{align*}

\begin{case}
Relations from Q4.1$'$.
\end{case}

This case is parallel to Case~\ref{case:steinberg}.
Any relation in $R_Q$ from Q4.1$'$ can be written as
\[\Con{z}{x}^{-\epsilon}\Mul{x^\epsilon}{v}\Con{z}{x}^\epsilon=\Con{z}{v}\Mul{x^\epsilon}{v}\]
for some $\epsilon=\pm1$, $x\in X$, $v\in X\cup Z$ and $z\in Z$ with $v\neq x,z$.
Again, we note that we have modified this relation using a relation from Q1.1, but since these act trivially this is allowable.
By Lemma~\ref{lemma:actionsupport}, the only cases where we might have $f(h^{-1}g_2g_1,s)$ not equal to $s$ are if $s$ equals
$\Mul{v}{y}$, $\Mul{v^{-1}}{y}$ (if $v\in X$), $\Con{v}{y}$ (if $v\in Z$), $\Mul{x^\epsilon}{y}$, $\Mul{x^{-\epsilon}}{y}$, $\Con{y}{x}$, $\Con{z}{y}$ or $\Con{y}{z}$.
Application of the definition of $f$  shows that $f(g_2g_1,s)=f(h,s)$ as elements of $F(S_K)$ unless $s$ is $\Mul{v}{y}$ or $\Con{v}{y}$.
The computations to show that these are fixed in $\Gamma$ by this relation are similar to the corresponding ones in Case~\ref{case:steinberg}.

\begin{case}\label{case:Q4.2}
Relations from Q4.2.
\end{case}

Any relation in $R_Q$ from Q4.2 can be written as
\[\Mul{x^\delta}{z}\Mul{x^\delta}{v}^\epsilon\Con{z}{v}^{\epsilon}=\Mul{x^\delta}{v}^{\epsilon}\Con{z}{v}^{\epsilon}\Mul{x^\delta}{z}\]
for $\delta,\epsilon=\pm1$, $x\in X$, $v\in X\cup Z$ and $z\in Z$ with $v\neq x,z$.
For the rest of this case let $g=\Mul{x^\delta}{z}$ and $h=\Mul{x^\delta}{v}^\epsilon\Con{z}{v}^{\epsilon}$, so that the relation reads $gh=hg$.
By Lemma~\ref{lemma:actionsupport}, the only cases where we might have $f(gh,s)$ not equal to $f(hg,s)$ are where $s$ equals
$\Mul{v^\epsilon}{y}$, $\Mul{v^{-\epsilon}}{y}$ (if $v\in X$), $\Con{v}{y}$ (if $v\in Z$), $\Mul{x^\delta}{y}$, $\Con{y}{x}$, $\Con{z}{y}$ or $\Con{y}{z}$ for some $y\in Y$.
Computing $f$ by the definition in $F(S_K)$ shows that $f(gh,s)=f(hg,s)$ as elements of $F(S_K)$ in all these cases except when $s$ is $\Mul{v^{-\epsilon}}{y}$ or  $\Con{v}{y}$.
If $v\in X$ and $s=\Mul{v^{-\epsilon}}{y}$, we compute:
\begin{align*}
f(gh,s)&=f(g,\Mul{v^{-\epsilon}}{y}\Mul{x^\delta}{y}\Con{z}{y})\quad &&\text{(computation in $F(S_K)$)}\\
&=f(g,\Mul{v^{-\epsilon}}{y}\Con{z}{y}\Mul{x^\delta}{y})&&\text{(relation R1.2)}\\
&=\Mul{v^{-\epsilon}}{y}\Con{z}{y}\Mul{x^\delta}{y} &&\text{(computation in $F(S_K)$)}\\
&=\Mul{v^{-\epsilon}}{y}\Mul{x^\delta}{y}\Con{z}{y} &&\text{(relation R1.2)}\\
&=f(hg,s) &&\text{(computation in $F(S_K)$).}
\end{align*}
If $v\in Z$ and $s=\Con{v}{y}$, we compute:
\begin{align*}
f(gh,s)&=f(g,\Con{v}{y}\big[\Mul{x^\delta}{y},\Con{y}{v}^{-\epsilon}\big]\big[\Con{z}{y},\Con{y}{v}^{-\epsilon}\big])\quad &&\text{(computation in $F(S_K)$)}\\
&=f(g,\Con{v}{y}\big[\Con{z}{y}\Mul{x^\delta}{y},\Con{y}{v}^{-\epsilon}\big])&&\text{(relations C1.2 and R1.2)}\\
&=\Con{v}{y}\big[\Con{z}{y}\Mul{x^\delta}{y},\Con{y}{v}^{-\epsilon}\big] &&\text{(computation in $F(S_K)$)}\\
&=\Con{v}{y}\big[\Mul{x^\delta}{y},\Con{y}{v}^{-\epsilon}\big]\big[\Con{z}{y},\Con{y}{v}^{-\epsilon}\big] &&\text{(relations C1.2 and R1.2)}\\
&=f(hg,s) &&\text{(computation in $F(S_K)$).}
\end{align*}

\begin{case}
Relations from Q4.2$'$.
\end{case}
This case is completely analogous to Case~\ref{case:Q4.2}.
Any relation in $R_Q$ from Q4.2$'$ can be written as:
\[\Con{z_j}{z_i}\Con{z_j}{v}^\epsilon\Con{z_i}{v}^{\epsilon}=\Con{z_j}{v}^\epsilon\Con{z_i}{v}^{\epsilon}\Con{z_j}{z_i}\]
for $\epsilon=\pm1$, $z_i,z_j\in Z$ with $z_i\neq z_j$ and $v\in X\cup Z$ with $v\neq z_i,z_j$.
By Lemma~\ref{lemma:actionsupport}, the only generators in $S_K$ that might not be fixed by this relation are
$\Mul{v^\epsilon}{y}$, $\Mul{v^{-\epsilon}}{y}$ (if $v\in X$), $\Con{v}{y}$ (if $v\in Z$), $\Con{z_i}{y}$, $\Con{y}{z_i}$, $\Con{z_j}{y}$ and $\Con{y}{z_j}$, for $y\in Y$.
Direct computation in $F(S_K)$ shows that these are all fixed except possibly $\Mul{v^{-\epsilon}}{y}$ and $\Con{v}{y}$.
Derivations that show that this relation fixes these elements in $\Gamma$ are parallel to the corresponding derivations 
in Case~\ref{case:Q4.2}, using relations C1.3 and R1.3 instead of C1.2 and R1.2.

\begin{case}
Relations from Q5.
\end{case}
Any relation in $R_Q$ from $Q5$ can be written as
\[\Mul{x^{-\epsilon}}{z}\Con{z}{x}^{\epsilon}=\Con{z}{x}^\epsilon\Mul{x^\epsilon}{z}^{-1}\]
for $\epsilon=\pm 1$, $x\in X$ and $z\in Z$.
The only generators in $S_K$ that might not be fixed by this relation are of the form
$\Mul{x^\epsilon}{y}$, $\Mul{x^{-\epsilon}}{y}$, $\Con{y}{z}$, $\Con{z}{y}$ or $\Con{y}{z}$ for $y\in Y$.
Straightforward computation shows that $\Con{y}{x}$ and $\Con{y}{z}$ are fixed as elements of $F(S_K)$.
Let $g=\Mul{x^{-\epsilon}}{z}\Con{z}{x}^{\epsilon}$ and let $h=\Con{z}{x}^\epsilon\Mul{x^\epsilon}{z}^{-1}$.
If $s=\Mul{x^\epsilon}{y}$:
\begin{align*}
f(g,s)&=(\Mul{x^\epsilon}{y}\Con{y}{x}^{-\epsilon}\Mul{x^{-\epsilon}}{y})\Con{y}{z}\Mul{x^{-\epsilon}}{y}^{-1}\Con{z}{y}^{-1}\Con{y}{z}^{-1}\Con{y}{x}^\epsilon \quad &&\text{(computation in $F(S_K)$)}\\
&=\Con{y}{x}^{-\epsilon}\Con{y}{z}\Mul{x^{-\epsilon}}{y}^{-1}\Con{z}{y}^{-1}\Con{y}{z}^{-1}\Con{y}{x}^\epsilon  &&\text{(relation R5)}\\
&=\Con{y}{x}^{-\epsilon}\Con{y}{z}(\Con{y}{x}^{\epsilon}\Mul{x^{\epsilon}}{y}\Con{y}{x}^{-\epsilon})\Con{z}{y}^{-1}\Con{y}{z}^{-1}\Con{y}{x}^\epsilon &&\text{(relation R5)}\\
&=f(h,s)&&\text{(computation in $F(S_K)$).}\\
\end{align*}
If $s=\Mul{x^{-\epsilon}}{y}$:
\begin{align*}
f(g,s)&=(\Con{y}{z}^{-1}\Mul{x^{-\epsilon}}{y}\Con{y}{z})(\Con{z}{y}\Mul{x^{-\epsilon}}{y})(\Con{y}{z}^{-1}\Mul{x^{-\epsilon}}{y}^{-1}\Con{y}{z})\quad &&\text{(computation in $F(S_K)$)}\\
&= \Con{z}{y}\Mul{x^{-\epsilon}}{y}&&\text{(relation C2.1)}\\
&= \Mul{x^{-\epsilon}}{y}\Con{z}{y}&&\text{(relation R1.2)}\\
&=f(h,s)&&\text{(computation in $F(S_K)$).}\\
\end{align*}
If $s=\Con{z}{y}$:
\begin{align*}
f(h,s)&=\Con{y}{x}^{-\epsilon}\Con{z}{y}\big[(\Con{y}{x}^{\epsilon}\Mul{x^{\epsilon}}{y}\Con{y}{x}^{-\epsilon})\Con{z}{y}^{-1},\ \Con{y}{z}\big]\Con{y}{x}^{\epsilon}\quad &&\text{(computation in $F(S_K)$)}\\
&=\Con{y}{x}^{-\epsilon}\Con{z}{y}\big[\Mul{x^{-\epsilon}}{y}^{-1}\Con{z}{y}^{-1},\ \Con{y}{z}\big]\Con{y}{x}^{\epsilon}&&\text{(relation R5)}\\
&=\Con{y}{x}^{-\epsilon}(\Con{z}{y}\Mul{x^{-\epsilon}}{y}^{-1}\Con{z}{y}^{-1})\Con{y}{z}\Con{z}{y}\Mul{x^{-\epsilon}}{y}\Con{y}{z}^{-1}\Con{y}{x}^{\epsilon}&&\\
&=\Con{y}{x}^{-\epsilon}\Mul{x^{-\epsilon}}{y}^{-1}\Con{y}{z}\Con{z}{y}\Mul{x^{-\epsilon}}{y}\Con{y}{z}^{-1}\Con{y}{x}^{\epsilon}&&\text{(relation R1.2)}\\
&=\Con{y}{x}^{-\epsilon}\Con{y}{z}(\Con{y}{z}^{-1}\Mul{x^{-\epsilon}}{y}^{-1}\Con{y}{z})(\Con{z}{y}\Mul{x^{-\epsilon}}{y})\Con{y}{z}^{-1}\Con{y}{x}^{\epsilon}&&\\
&=\Con{y}{x}^{-\epsilon}\Con{y}{z}(\Con{z}{y}\Mul{x^{-\epsilon}}{y})(\Con{y}{z}^{-1}\Mul{x^{-\epsilon}}{y}^{-1}\Con{y}{z})\Con{y}{z}^{-1}\Con{y}{x}^{\epsilon}&&\text{(relation C2.1)}\\
&=\Con{y}{x}^{-\epsilon}\Con{y}{z}\Con{z}{y}\big[\Mul{x^{-\epsilon}}{y},\Con{y}{z}^{-1}\big]\Con{y}{z}^{-1}\Con{y}{x}^{\epsilon}&&\\
&=f(g,s)&&\text{(computation in $F(S_K)$).}\\
\end{align*}
This completes the proof of Proposition~\ref{proposition:action}.
\end{proof}

\appendix
\section{Appendix : Simple closed curves and conjugacy classes in $\pi_1(\Sigma_g)$}
\label{appendix:curvesdesc}

In this appendix, we prove Proposition~\ref{proposition:curvesdesc} from \S \ref{section:introduction}.  Recall
that it asserts that a collection $c_1,\ldots,c_k$ of conjugacy classes in $\pi_1(\Sigma_g)$ corresponds
to a collection $\gamma_1,\ldots,\gamma_k$ of homotopy classes of disjoint oriented non-nullhomotopic
simple closed curves on $\Sigma_g$ such that $\Sigma_g \setminus (\gamma_1 \cup \cdots \cup \gamma_k)$
is connected if and only if there exists a standard basis $a_1,b_1,\ldots,a_g,b_g$ for $\pi_1(\Sigma_g)$
with $c_i = \Conj{a_i}$ for $1 \leq i \leq k$.

Define
\[\mathcal{B} = \Set{$(\Conj{a_1},\Conj{b_1},\ldots,\Conj{a_g},\Conj{b_g})$}{$a_1,b_1,\ldots,a_g,b_g$ is a standard basis for $\pi_1(\Sigma_g)$}\]
and
\[\mathcal{X}_k = \Set{$(\gamma_1,\ldots,\gamma_k$)}{$\gamma_1,\ldots,\gamma_k$ are homotopy classes of oriented closed curves on $\Sigma_g$}.\]
Let $\mathcal{Y}_k \subset \mathcal{X}_k$ be
\begin{align*}
\mathcal{Y}_k 
=\{\text{$(\gamma_1,\ldots,\gamma_k) \in \mathcal{X}_k$ $|$ }&\text{$\gamma_1,\ldots,\gamma_k$ are distinct, simple and disjointly realizable} \\
&\text{with $\Sigma_g \setminus (\gamma_1 \cup \cdots \cup \gamma_k)$ is connected}\}.
\end{align*}
\begin{remark}
The fact that $\Sigma_g \setminus (\gamma_1 \cup \cdots \cup \gamma_k)$ is connected implies that $\gamma_1,\ldots,\gamma_k$ are non-nullhomotopic.
\end{remark}
There is a natural map $\rho : \mathcal{B} \rightarrow \mathcal{X}_k$ taking $(\Conj{a_1},\Conj{b_1},\ldots,\Conj{a_g},\Conj{b_g}) \in \mathcal{B}$
to the tuple of closed curves corresponding to $(\Conj{a_1},\Conj{a_2},\ldots,\Conj{a_k})$, and our goal is to show
that $\rho(\mathcal{B}) = \mathcal{Y}_k$.  

The group $\Out(\pi_1(\Sigma_g))$ acts on $\mathcal{B}$ and the group $\Mod(\Sigma_g)$ acts on $\mathcal{X}_k$.  The 
Dehn-Nielsen-Baer theorem (see \cite{FarbMargalitPrimer}) says that 
$\Mod(\Sigma_g) \cong \Out(\pi_1(\Sigma_g))$, and under this identification the map $\rho$ is equivariant.  Now, 
it is clear that $\Out(\pi_1(\Sigma_g))$ acts transitively on $\mathcal{B}$.  Moreover, the action of $\Mod(\Sigma_g)$ on
$\mathcal{X}_k$ preserves $\mathcal{Y}_k$, and using the ``change of coordinates'' principle from \cite[\S 1.3]{FarbMargalitPrimer} one
can show that $\Mod(\Sigma_g)$ acts transitively on $\mathcal{Y}_k$.  Since the image of $\rho$ certainly contains at least
one element of $\mathcal{Y}_k$, it follows that the image equals $\mathcal{Y}_k$, as desired.

\noindent
\begin{tabular*}{\linewidth}[t]{@{}p{\widthof{Department of Mathematics 253-37}+1in}@{}p{\linewidth - \widthof{Department of Mathematics 253-37}-1in}@{}}
{\raggedright
Matthew Day\\
Department of Mathematics 253-37\\
California Institute of Technology\\
Pasadena, CA 91125\\
E-mail: {\tt mattday@caltech.edu}}
&
{\raggedright
Andrew Putman\\
Department of Mathematics\\
Rice University, MS 136 \\
6100 Main St.\\
Houston, TX 77005\\ 
E-mail: {\tt andyp@rice.edu}}
\end{tabular*}


\begin{thebibliography}{}
\begin{small}
\setlength{\itemsep}{1pt}

\bibitem{Bartholdi}
L. Bartholdi,
Endomorphic presentations of branch groups,
J. Algebra {\bf 268} (2003), no.~2, 419--443.

\bibitem{BestvinaFeighn}
M. Bestvina\ and\ M. Feighn, 
A hyperbolic ${\rm Out}(F\sb n)$-complex,
Groups Geom. Dyn. {\bf 4} (2010), no.~1, 31--58.

\bibitem{BirmanSequence}
J. S. Birman, 
Mapping class groups and their relationship to braid groups, 
Comm. Pure Appl. Math. {\bf 22} (1969), 213--238. 

\bibitem{BroaddusFarbPutman}
N. Broaddus, B. Farb\ and\ A. Putman, Irreducible Sp-representations and subgroup distortion in the mapping class group, Comment. Math. Helv. {\bf 86} (2011), no.~3, 537--556. 

\bibitem{BrownCohomology}
K. S. Brown,
Cohomology of groups,
Graduate Texts in Mathematics,  {\bf 87}, Springer-Verlag, New York, 1982.

\bibitem{ChurchFarb}
T. Church\ and\ B. Farb, 
Infinite generation of the kernels of the Magnus and Burau representations, 
Algebr. Geom. Topol. {\bf 10} (2010), no.~2, 837--851.

\bibitem{CohenPakianathan}
F. Cohen\ and\ J. Pakianathan,
On automorphism groups of free groups and their nilpotent quotients,
in preparation.

\bibitem{CollinsGilbert}
D. J. Collins\ and\ N. D. Gilbert, 
Structure and torsion in automorphism groups of free products, 
Quart. J. Math. Oxford Ser. (2) {\bf 41} (1990), no.~162, 155--178.

\bibitem{DayPutmanComplex}
M. B. Day\ and\ A. Putman,
The complex of partial bases for $F_n$ and finite generation of the Torelli subgroup of $\mathrm{Aut}(F_n)$, arXiv:1012.1914, 
preprint 2010.

\bibitem{FarbIA}
B. Farb,
Automorphisms of $F_n$ which act trivially on homology,
in preparation.

\bibitem{FarbMargalitPrimer}
B. Farb\ and\ D. Margalit, 
{\it A primer on mapping class groups}, 
Princeton Mathematical Series, 49, Princeton Univ. Press, Princeton, NJ, 2012.

\bibitem{HatcherSphereComplex}
A. Hatcher,
Homological stability for automorphism groups of free groups,
Comment. Math. Helv. {\bf 70} (1995), no.~1, 39--62.

\bibitem{HatcherVogtmannFreeFactors}
A. Hatcher\ and\ K. Vogtmann,
The complex of free factors of a free group,
Quart. J. Math. Oxford Ser. (2) {\bf 49} (1998), no.~196, 459--468.

\bibitem{HatcherVogtmannCerf}
A. Hatcher\ and\ K. Vogtmann,
Cerf theory for graphs,
J. London Math. Soc. (2) {\bf 58} (1998), no.~3, 633--655.

\bibitem{HatcherWahl}
A. Hatcher\ and\ N. Wahl, 
Stabilization for the automorphisms of free groups with boundaries, 
Geom. Topol. {\bf 9} (2005), 1295--1336.

\bibitem{JensenWahl}
C. A. Jensen\ and\ N. Wahl,
Automorphisms of free groups with boundaries,
Algebr. Geom. Topol. {\bf 4} (2004), 543--569.

\bibitem{JohnsonHomo}
D. Johnson,
An abelian quotient of the mapping class group ${\cal I}\sb{g}$,
Math. Ann. {\bf 249} (1980), no.~3, 225--242.

\bibitem{JohnsonSurvey}
D. Johnson, 
A survey of the Torelli group, 
in {\it Low-dimensional topology (San Francisco, Calif., 1981)}, 165--179, 
Contemp. Math., 20 Amer. Math. Soc., Providence, RI.

\bibitem{KapovichLustig}
I. Kapovich\ and\ M. Lustig,
Geometric intersection number and analogues of the curve complex for free groups,
Geom. Topol. {\bf 13} (2009), no.~3, 1805--1833.

\bibitem{Kawazumi}
N. Kawazumi,
Cohomological aspects of Magnus expansions,
preprint 2005.

\bibitem{McCoolPresentation}
J. McCool,
Some finitely presented subgroups of the automorphism group of a free group,
J. Algebra {\bf 35} (1975), 205--213.

\bibitem{McCoolVerification}
J. McCool,
On Nielsen's presentation of the automorphism group of a free group. 
J. London Math. Soc. (2) {\bf 10} (1975), 265--270. 

\bibitem{McCoolSymmetric}
J. McCool, 
On basis-conjugating automorphisms of free groups, 
Canad. J. Math. {\bf 38} (1986), no.~6, 1525--1529.


\bibitem{NielsenPresentation}
J. Nielsen,
Die Isomorphismengruppe der freien Gruppen. (German) 
Math. Ann. {\bf 91} (1924), no. 3-4, 169--209. 


\bibitem{Pettet}
A. Pettet, 
Finiteness properties for a subgroup of the pure symmetric automorphism group, 
C. R. Math. Acad. Sci. Paris {\bf 348} (2010), no.~3-4, 127--130. 

\bibitem{WahlFromModToAut}
N. Wahl, 
From mapping class groups to automorphism groups of free groups, 
J. London Math. Soc. (2) {\bf 72} (2005), no.~2, 510--524. 

\end{small}
\end{thebibliography}
\end{document}